\newtheorem{Theorem}{Theorem}[section]
\newtheorem{Definition}[Theorem]{Definition}
\newtheorem{Lemma}[Theorem]{Lemma}
\newtheorem{Corollary}[Theorem]{Corollary}
\newtheorem{Remark}[Theorem]{Remark}
\newtheorem*{Assumption}{Assumption}
\numberwithin{equation}{section}
\begin{document}

\def\le{\left}
\def\r{\right}
\def\cost{\mbox{const}}
\def\a{\alpha}
\def\d{\delta}
\def\ph{\varphi}
\def\e{\epsilon}
\def\la{\lambda}
\def\si{\sigma}
\def\La{\Lambda}
\def\B{{\cal B}}
\def\A{{\mathcal A}}
\def\L{{\mathcal L}}
\def\O{{\mathcal O}}
\def\bO{\overline{{\mathcal O}}}
\def\F{{\mathcal F}}
\def\K{{\mathcal K}}
\def\H{{\mathcal H}}
\def\D{{\mathcal D}}
\def\C{{\mathcal C}}
\def\M{{\mathcal M}}
\def\N{{\mathcal N}}
\def\G{{\mathcal G}}
\def\T{{\mathcal T}}
\def\R{{\mathbb R}}
\def\I{{\mathcal I}}

\def\bw{\overline{W}}
\def\phin{\|\varphi\|_{0}}
\def\s0t{\sup_{t \in [0,T]}}
\def\lt{\lim_{t\rightarrow 0}}
\def\iot{\int_{0}^{t}}
\def\ioi{\int_0^{+\infty}}
\def\ds{\displaystyle}
\def\pag{\vfill\eject}
\def\fine{\par\vfill\supereject\end}
\def\acapo{\hfill\break}

\def\beq{\begin{equation}}
\def\eeq{\end{equation}}
\def\barr{\begin{array}}
\def\earr{\end{array}}
\def\vs{\vspace{.1mm}   \\}
\def\rd{\reals\,^{d}}
\def\rn{\reals\,^{n}}
\def\rr{\reals\,^{r}}
\def\bD{\overline{{\mathcal D}}}
\newcommand{\dimo}{\hfill \break {\bf Proof - }}
\newcommand{\nat}{\mathbb N}
\newcommand{\E}{\mathbb E}
\newcommand{\Pro}{\mathbb P}
\newcommand{\com}{{\scriptstyle \circ}}
\newcommand{\reals}{\mathbb R}

\def\Amu{{A_\mu}}
\def\Qmu{{Q_\mu}}
\def\Smu{{S_\mu}}
\def\H{{\mathcal{H}}}
\def\Im{{\textnormal{Im }}}
\def\Tr{{\textnormal{Tr}}}
\def\E{{\mathbb{E}}}
\def\P{{\mathbb{P}}}
\def\span{{\textnormal{span}}}
\renewcommand{\arraystretch}{1.2}

\patchcmd{\abstract}{-.5em}{-0.5em}{}{}

\title{Metastability of diffusion processes in narrow tubes}
\author{Wen-Tai Hsu\footnote{Department of Mathematics, University of Maryland, College Park, MD 20742, USA, wthsu@umd.edu}}

\date{\vspace{-5ex}}

\maketitle

\begin{abstract}
We study the metastable behavior of diffusion processes in narrow tube domains, where the metastability is induced by entropic barriers.
We identify a sequence of characteristic time scales $\{T_\epsilon^i\}_{1 \leq i \leq \abs{V'}}$ and characterize the asymptotic behavior of the diffusion process both at intermediate time scales and at the first critical time scale.
Our analysis relies on a refined understanding of the narrow escape problem in domains with bottlenecks, in particular on estimates for the exit place and on the conditional distribution of the exit time given the exit place, results that may be of independent interest.
\end{abstract}

\setcounter{tocdepth}{1}
\tableofcontents

\section{Introduction}
In this paper, we start with a simple, finite, connected graph $\Gamma = (V,E) \subset \mathbb{R}^d$, with vertices $O_1,\cdots,O_{\abs{V}} \in V$ and edges $I_1,\cdots,I_{\abs{E}} \in E$, where $d=2$ or $3$.
Here, $V$ and $E$ denote the sets of vertices and edges, respectively, and $\abs{V}$ and $\abs{E}$ their cardinalities.
The associated narrow tube domain $G_\epsilon$ consists of the union of $\lambda_k \epsilon$-neighborhoods of the edges $I_k$ and $r_j(\epsilon)$-neighborhoods of the vertices $O_j$, where $\lambda_k$ are positive constants and $r_j(\epsilon) \downarrow 0$ as $\epsilon \downarrow 0$.

We aim to study the diffusion process with reflecting boundary conditions in the domain $G_\epsilon$
\begin{equation}\label{narrow diffusion}
    dZ^\epsilon(t) = \sqrt{2}dB(t) + \nu_\epsilon (Z^\epsilon(t)) d\phi^\epsilon(t), \ Z^\epsilon(0) = z \in G_\epsilon.
\end{equation}
Here, $B(t)$ is a $d$-dimensional Brownian motion, $\nu_\epsilon(z)$ is the unit inward normal vector at the point $z \in \partial G_\epsilon$, and $\phi^\epsilon(t)$ is the local time of $Z^\epsilon(t)$ on the boundary $\partial G_\epsilon$.

Equations of this type were studied in \cite{SDE2} for the case where $r_j(\epsilon)$ is of order $\epsilon$, and in \cite{Hsu25} for more general scaling (see also \cite[Chapter 7]{SDE1}).
Already in \cite{Hsu25}, through the study of the narrow escape problem, it was observed that a metastable behavior arises when
\begin{equation}\label{ass r}
r_j(\epsilon)^d \gg \epsilon^{d-1}, \qquad \text{equivalently,} \qquad \lim_{\epsilon \to 0} \frac{r_j(\epsilon)^d}{\epsilon^{d-1}} = \infty,
\end{equation}
and the narrow tube domain consists of only two vertex neighborhoods connected by a single edge neighborhood.
The purpose of the present work is to describe this metastable behavior more precisely and to extend the analysis to general graph structures.
Accordingly, we will assume condition \eqref{ass r} holds for all $j=1,\cdots,\abs{V}$ throughout the paper.

Metastability refers to the phenomenon in which a stochastic system remains for a long time in a local equilibrium (a metastable state) before making a rare transition to another state.
It is a ubiquitous feature in molecular dynamics, materials science, and many related fields (see, e.g., \cite{ERV02}, \cite{BLS15}, \cite{SS13}).
Various mathematical frameworks have been developed to analyze metastability in random dynamical systems.
These include the large deviation approach (\cite{FW98}, \cite{FK10p}, \cite{FK10s}), the potential-theoretic approach (\cite{BEGK04}, \cite{BGK05}, \cite{BH15}), the resolvent approach (\cite{LLS24}, \cite{LLS24+}, \cite{LMS25}, \cite{LM25}) and the quasi-stationary approach (\cite{LLN22}, \cite{LLN25}, \cite{NETsp}). 
We also mention the general meta-theorems developed in \cite{BL16}, \cite{FK17}, \cite{LX16} and \cite{SMP}.
The references above are not intended to be exhaustive.

Most studies focus on systems with small noise, where metastability arises from an energetic barrier.
In such settings, the deterministic dynamics creates deep potential wells that trap the system for long periods, and rare transitions occur only due to small stochastic perturbations that enable escape from these wells.

In contrast, the metastability studied in this paper originates from an entropic barrier (see, e.g., \cite{K+04}, \cite{BLS15}, \cite{NETsp}).
Here, the particle does not need to overcome a high energy barrier; instead, it must find a narrow geometric opening through which it can make a rare transition to another metastable state.
This type of mechanism can be viewed as a metastable phenomenon driven by entropic effects.
Because of this fundamentally different mechanism, understanding metastability arising from entropic barriers requires a detailed analysis of geometric effects, which contrasts sharply with the techniques used in the energetic-barrier setting.

In this work, our goal is to understand the asymptotic behavior of the $Z^\epsilon(t(\epsilon))$ as $\epsilon \to 0$, for time scales $t(\epsilon)$ that diverge at different rates. 
To be more precise, let $\Pi^\epsilon:G_\epsilon \to \Gamma$ be some continuous modification of the nearest projection $\Pi$ (see Section \ref{sec narrow tube} for a precise definition).
Our main theorems are the following two:
\begin{Theorem}
    There exist transition time scales $\{T_\epsilon^i\}_{i=1}^{\abs{V'}}$ (the index set $V'$ and the quantities $T_\epsilon^i$ will will be defined explicitly later) such that for any $t(\epsilon) > 0$ satisfying $T_{\epsilon}^i \ll t(\epsilon) \ll T_{\epsilon}^{i+1}$ and any continuous function $F \in C(\Gamma)$, we have
	\begin{equation}
		\lim_{\epsilon \to 0} \sup_{x \in \Gamma} \sup_{z: \Pi^\epsilon(z)=x} \abs{ \mathbb{E}_{z} (F(\Pi^\epsilon(Z^\epsilon(t(\epsilon)))) - \sum_{j'=1}^{\abs{V}} F(O_{j'}) \mu^i(x,O_{j'}) } = 0,
	\end{equation}
    where $\mu^i(x,\cdot)$ denotes an limiting distribution of a certain discrete-time Markov chain, whose precise definition will be given later.
\end{Theorem}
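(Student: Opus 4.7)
The plan is a multi-scale coarse-graining built on the narrow escape analysis of \cite{Hsu25}. By condition \eqref{ass r}, the expected time for $Z^\epsilon$ to exit the $r_j(\epsilon)$-neighborhood of a vertex $O_j$ is of order $\tau_j(\epsilon) \asymp r_j(\epsilon)^d/\epsilon^{d-1}$, with a geometric prefactor depending on the cross-sections of incident edges, whereas transit along an edge-neighborhood takes time $O(1)$. Ranking the $\tau_j(\epsilon)$ from smallest to largest produces the ordered time scales $T_\epsilon^1 \ll \cdots \ll T_\epsilon^{|V'|}$ and identifies the metastable vertex set $V'$. Two technical inputs I would lean on throughout, both advertised in the abstract, are the asymptotic exit-place distribution at each $O_j$ and the conditional distribution of the exit time given the exit place; the latter is what will make successive excursions approximately independent.

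First, for each level $i$, I would partition $V$ into clusters: a cluster at level $i$ is a maximal subset of vertices that are mutually reachable by paths crossing only traps whose escape scale is $o(T_\epsilon^i)$. Moving between distinct level-$i$ clusters requires escaping a trap of scale at least $T_\epsilon^{i+1}$. Within each cluster I would define a discrete-time Markov chain $X^{(i)}$ whose one-step transition from a trap $O_j$ is governed by the narrow-escape exit-place distribution at $O_j$ followed by the hitting distribution on the next trap of the cluster along the deterministic edge routing. The measure $\mu^i(x,\cdot)$ would then be the unique invariant distribution of $X^{(i)}$ restricted to the cluster containing $x$.

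Second, to justify convergence at a scale $T_\epsilon^i \ll t(\epsilon) \ll T_\epsilon^{i+1}$, I would perform an excursion decomposition of $Z^\epsilon$ with respect to the boundaries of the level-$i$ trap neighborhoods. Each excursion begins at such a boundary, traverses edges and faster cells, and ends upon entry into the next trap neighborhood. The narrow escape estimates show that its endpoint distribution is close to the $X^{(i)}$-kernel and that its length concentrates around a scale of order at most $T_\epsilon^i$; since $t(\epsilon) \gg T_\epsilon^i$, the number of completed excursions by time $t(\epsilon)$ diverges and $X^{(i)}$ mixes to its invariant law, while $t(\epsilon) \ll T_\epsilon^{i+1}$ prevents any transition between clusters. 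The fact that the stationary mass on edge-neighborhoods is $o(1)$ relative to that on vertex neighborhoods, again by \eqref{ass r}, ensures that asymptotically the distribution of $\Pi^\epsilon(Z^\epsilon(t(\epsilon)))$ concentrates on the vertex set $V$. Uniformity in $z \in (\Pi^\epsilon)^{-1}(x)$ follows because the hitting time of the nearest trap boundary from $z$ is $o(T_\epsilon^i)$, so the initial position is forgotten before the first excursion completes.

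The principal difficulty I anticipate is controlling the error hierarchy across multiple time scales simultaneously. The Markov chain approximation must hold up to $o(1)$ error over roughly $t(\epsilon)/T_\epsilon^i$ iterations, which forces sharp rather than leading-order bounds on the one-step kernel; this is precisely where the conditional exit-time control is essential, since without it the temporal and spatial parts of each excursion cannot be decoupled. A secondary delicate point is that vertices with asymptotically equal escape scales must be grouped into a single level of the hierarchy rather than artificially separated, so the precise definition of $V'$ and of the equivalence classes producing each $T_\epsilon^i$ requires care.
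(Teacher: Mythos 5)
Your identification of the time scales $T_\epsilon^i$ from the escape times $\asymp r_j(\epsilon)^d/\epsilon^{d-1}$, and your reliance on the narrow-escape exit-place and exit-time estimates, both align with the paper. The serious gap is in your description of the limiting object and of the dynamical mechanism producing it. You cast $\mu^i(x,\cdot)$ as the unique invariant distribution of a cluster chain to which the process mixes after a diverging number of excursions; in fact the paper's discrete chain $X_n^i$ declares every vertex $O_{j'}$ with $r_{j'}(\epsilon) \gg r_{(i)}(\epsilon)$ to be \emph{absorbing}, while vertices with $r_j(\epsilon) \lesssim r_{(i)}(\epsilon)$ jump to neighbours according to the narrow-escape exit-place probabilities $p_{j,k}(\vec L)$, and $\mu^i(O_j,\cdot)$ is the resulting \emph{absorption} distribution. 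Starting from a fast vertex, this chain is absorbed after only $O(1)$ jumps, not a diverging number; and the absorption law depends genuinely on the starting vertex (e.g.\ starting from a slow vertex gives $\mu^i(O_j,\cdot)=\delta_{O_j}$), so no single ``invariant distribution restricted to the cluster containing $x$'' can reproduce it. The cluster/mixing picture is the wrong mechanism at these intermediate scales.

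The actual proof structure is: (1) the first hitting time $\sigma^{\epsilon,i}$ of the $\delta$-neighbourhoods of the slow vertices satisfies $\sigma^{\epsilon,i} \ll t(\epsilon)$ with high probability when $t(\epsilon)\gg T_\epsilon^i$, because only boundedly many escape cycles of duration $\asymp T_\epsilon^i$ are needed before absorption; (2) once near a slow vertex the process does not leave its $\delta$-neighbourhood before time $t(\epsilon)$ when $t(\epsilon)\ll T_\epsilon^{i+1}$, via the exponential law for the exit time; (3) the law of $Z^\epsilon(\sigma^{\epsilon,i})$ converges to $\mu^i(O_j,\cdot)$ via the exit-place estimate. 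Since the chain runs only $O(1)$ steps, the error-accumulation over $t(\epsilon)/T_\epsilon^i$ iterations you worry about does not arise, and leading-order one-step estimates suffice. Finally, the extension to general initial $z$ uses the one-dimensional splitting probabilities $p(x,O_j)=d(x,O_{j'})/\abs{I_k}$ on the containing edge, not a stationary-mass comparison between edge and vertex neighbourhoods; the process is far from stationarity at these time scales, so the stationary-mass argument you invoke would not be justified.
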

\begin{Theorem}
    Suppose that there exists a vertex $O_{j_1} \in V$ such that $r_{j_1}(\epsilon) \ll r_j(\epsilon)$ for all $j \neq j_1$. Then, for any $s > 0$ and any continuous function $F \in C(\Gamma)$, we have
	\begin{equation}
		\lim_{\epsilon \to 0} \sup_{x \in \Gamma} \sup_{z: \Pi^\epsilon(z)=x}  \abs{ \mathbb{E}_z  F(\Pi^\epsilon(Z^\epsilon(sT_\epsilon^1))) -  \sum_{j=1}^{\abs{V}} p(x,O_j) \mathbf{E}_{O_j} F(Y(s)) } = 0,
	\end{equation}
    where $p(x,O_j)$ and $Y(s)$ (a continuous-time Markov chain on the space of vertices) will be introduced and defined precisely later.
\end{Theorem}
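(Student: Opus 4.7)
The plan is to exploit the separation of time scales $T_\epsilon^1 \ll T_\epsilon^2 \ll \cdots$ implied by the hypothesis $r_{j_1}(\epsilon) \ll r_j(\epsilon)$ for $j \neq j_1$: the vertex $O_{j_1}$ is the unique ``fast'' vertex whose mean narrow-escape time is of order $T_\epsilon^1$, while all other vertex neighborhoods have mean exit times of strictly higher order. On the scale $sT_\epsilon^1$ the process $Z^\epsilon$ therefore splits into (i) an initial fast-equilibration phase whose output is the distribution $p(x,\cdot)$ on vertices and (ii) a finite sequence of slow excursions whose statistics match a CTMC $Y$ whose only non-trivial jumps depart from $O_{j_1}$. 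I would first choose an intermediate scale $t_0(\epsilon)$ with $1 \ll t_0(\epsilon) \ll T_\epsilon^1$ and invoke the preceding intermediate-time theorem (at the stage preceding any $T_\epsilon^1$-scale transition) to obtain
\begin{equation}
   \sup_{x \in \Gamma}\,\sup_{z:\Pi^\epsilon(z)=x}\,\Bigl|\,\mathbb{E}_z F(\Pi^\epsilon(Z^\epsilon(t_0(\epsilon)))) - \sum_j F(O_j)\,p(x,O_j)\,\Bigr| \to 0.
\end{equation}
By the Markov property at $t_0(\epsilon)$, matters reduce to showing, uniformly over starting points in a core of each vertex $O_j$, that $\mathbb{E}_{\cdot}F(\Pi^\epsilon(Z^\epsilon(sT_\epsilon^1))) \to \mathbf{E}_{O_j} F(Y(s))$.

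Starting from a point in a prescribed core neighborhood of $O_j$, I introduce stopping times $0 = \tau_0^\epsilon < \tau_1^\epsilon < \cdots$ at which $Z^\epsilon$ first enters a core neighborhood of some vertex different from its current one, and set $J_k^\epsilon := \Pi^\epsilon(Z^\epsilon(\tau_k^\epsilon))$. The narrow-escape estimates cited in the abstract, namely the asymptotics of the exit time, the exit-place distribution, and the conditional law of the exit time given the exit place, identify for each vertex the distribution of the next visited vertex (giving the jump kernel of $Y$), the rescaled conditional mean exit time (giving the jump rates), and asymptotic exponentiality of $T_\epsilon^{-1}(\tau_{k+1}^\epsilon-\tau_k^\epsilon)$ given the exit place. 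Because $r_{j_1}(\epsilon) \ll r_j(\epsilon)$ for $j \neq j_1$, excursions from any $O_j$ with $j \neq j_1$ have mean exit time of order $T_\epsilon^i \gg T_\epsilon^1$, so these states are effectively absorbing on the $sT_\epsilon^1$ scale, matching the structure of $Y$. Using the strong Markov property at each $\tau_k^\epsilon$, together with tightness of the number of jumps in $[0,sT_\epsilon^1]$ and the fact that edge transit times are $o(T_\epsilon^1)$ (so that with high probability $Z^\epsilon(sT_\epsilon^1)$ lies in a vertex core), I deduce convergence in law of the jump skeleton, and hence convergence of the expectations by continuity and boundedness of $F$.

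The main obstacle is the rigorous verification of the renewal/Markov structure: I must establish that by time $\tau_k^\epsilon + o(T_\epsilon^1)$ the process has relaxed to an approximately quasi-stationary distribution inside the current vertex neighborhood, so that successive excursions become asymptotically independent and the inter-jump times asymptotically exponential with the correct rates. This calls for a uniform mixing estimate in each vertex neighborhood on scales intermediate between the local relaxation time and $T_\epsilon^1$, which should be extractable from the spectral gap implicit in the narrow-escape analysis. A secondary difficulty is uniformity in $z$ with $\Pi^\epsilon(z) = x$ when $x$ sits in the interior of an edge, where the initial one-dimensional drift phase differs from the purely vertex-based picture; this is absorbed into the first step by using a uniform version of the exit-place estimates for one-dimensional reflected diffusions on the edge, after which both endpoints feed into the same vertex-equilibration mechanism and the combined output is again $p(x,\cdot)$.
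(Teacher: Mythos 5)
Your plan has the right ingredients (the exit-place law, the conditional exponential law, and the separation of time scales), but two steps don't match the actual structure and one of them is a genuine gap. First, the preliminary reduction via an intermediate scale $t_0(\epsilon) \ll T_\epsilon^1$ cannot be obtained by invoking the intermediate-time theorem: that result is stated and proved only for $T_\epsilon^i \ll t(\epsilon) \ll T_\epsilon^{i+1}$ with $i \geq 1$, so there is no ``stage $0$'' to appeal to. What is actually used (Corollary 6.3 in the paper) is the elementary argument that from any $z$ with $\Pi^\epsilon(z)=x$ the process first hits one of the vertex cores $C_{\epsilon,j}(r_j(\epsilon)+3\epsilon)$, with limiting probability $p(x,O_j)$ given by one-dimensional Brownian motion on the edge, in a time that is negligible compared with $T_\epsilon^1$. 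You would need to supply this directly rather than cite the $i\geq 1$ theorem.

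Second, and more substantially, you identify as the ``main obstacle'' a renewal/quasi-stationary mixing estimate needed to make successive excursions asymptotically independent and exponential. This over-engineers the problem. Under the standing hypothesis $r_{j_1}(\epsilon)\ll r_j(\epsilon)$ for $j\neq j_1$, the limiting chain $Y$ makes \emph{at most one} jump: its only nonzero rate is at $O_{j_1}$, and every other vertex is absorbing (rate $\kappa_{j_1}\mathbbm{1}_{\{j=j_1\}}=0$). Correspondingly, on the $sT_\epsilon^1$ time scale the diffusion escapes from $O_{j_1}$ at most once, since any vertex it reaches with $j\neq j_1$ traps it for a time $\gg T_\epsilon^1$ by \eqref{dis exit time upper}. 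Hence no tightness of jump counts and no repeated-excursion renewal structure is needed, and no quasi-stationary or spectral-gap input is required. The paper's proof is a single-step decomposition on the event $\{\sigma_{j_1}^\epsilon(\vec{L}^\epsilon)\geq sT_\epsilon^1\}$ versus its complement, where the nontrivial ingredient you are missing is Lemma \ref{conti lem}: conditioned on not having escaped, $Z^\epsilon(sT_\epsilon^1)$ lies in $B_{\epsilon,j_1}(\delta)$ with high probability. Your heuristic that ``edge transit times are $o(T_\epsilon^1)$'' is the right intuition but does not by itself rule out being caught in transit at the snapshot time; the paper proves this by decomposing into cycles $\tau_{j,n}^{\epsilon,\delta},\sigma_{j,n}^{\epsilon,\delta}$ and using the exponential law \eqref{exp est 1} to control the probability of being in any given excursion at time $sT_\epsilon^1$. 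Once this is in hand, the conditional exponential law \eqref{exp est 2} (asymptotic independence of exit time and exit edge) together with the exit-place law (Lemma \ref{exit place est new}) gives the exact factorization that matches $\mathbf{E}_{O_{j_1}}F(Y(s))$, with no further mixing argument.
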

The first theorem establishes the asymptotic behavior of the diffusion process at an intermediate time scale, showing it approximates a discrete-time Markov chain with absorbing states. 
The second theorem then describes the behavior at the first critical time scale, which is shown to resemble a continuous-time Markov chain.
The proofs rely on refined exit estimates for the narrow escape problem in domains with bottlenecks, improving upon the results in \cite{Hsu25}.

The narrow escape problem, which concerns the behavior of Brownian motion within a reflecting domain that features small windows, is a well-established field of study in applied mathematics and physics. 
This problem serves as an essential model for various physical and biological processes, including the propagation of electrical signals along nearly one-dimensional neuronal structures and the diffusion and transport of proteins within cellular networks (see, e.g., \cite{NET}, \cite{NETBN1}, \cite{NETnetwork} for applied perspectives).
The rigorous mathematical analysis of this problem has received comparatively less attention. 
Existing mathematical works primarily focus on smooth domains (see \cite{LPNET}, \cite{NETsecond}, \cite{NET4}, and \cite{NETsp}), while studies addressing domains with complex geometric features, such as bottlenecks, remain relatively scarce (\cite{Hsu25}).

In this paper, we establish a more general exit place estimate and show that the exit time, conditioned on the exit edge, is asymptotically exponentially distributed with the same parameter as the unconditional exit time.
In other words, the exit time and the exit edge become asymptotically independent.

Finally, since the long-time behavior of solutions to the associated partial differential equations can be understood through the metastable dynamics of the underlying diffusion process, we use our diffusion asymptotics to characterize the asymptotic behavior of the solution $\rho_\epsilon(t(\epsilon),z)$ to the following PDE,
\begin{equation}
\begin{aligned}
    \begin{dcases}
    \displaystyle
        \frac{\partial \rho_\epsilon }{\partial t} (t,z)
        = \Delta \rho_\epsilon (t,z),
        \ \  z \in G_\epsilon, \\
        \frac{\partial \rho_\epsilon }{\partial \nu_\epsilon} (t,z) = 0 , \ z \in 	\partial G_\epsilon, \ \ \
        \rho_\epsilon(0,z)=\varphi_\epsilon (z), \ z \in G_\epsilon, \ \ \
    \end{dcases}
\end{aligned}
\end{equation}
where the initial condition $\varphi_\epsilon$ is assumed to be equicontinuous and equibounded.
This assumption includes the case $\varphi_\epsilon = \varphi|_{G_\epsilon}$ for some $\varphi \in C(\bar{G})$, where $G \coloneqq G_1$.

The paper is organized as follows.
In Section \ref{sec prelim}, we review the construction of the graph $\Gamma$ and the corresponding narrow tube domain $G_\epsilon$, as well as previously established results on exit-time and exit-place estimates.
Section \ref{sec exit} develops estimates for the exit time and exit distribution, while Section \ref{sec exit exp} establishes the exponential law for the exit time.
In Sections \ref{sec dis}, we study the asymptotic behavior of the process at the intermediate time scale.
In particular, Section \ref{sec dis markov} shows that, at this time scale, the diffusion process behaves asymptotically as a discrete-time Markov chain, whereas Section \ref{sec pde} investigates the corresponding asymptotic behavior for the PDE.
Finally, Section \ref{sec conti} is devoted to the analysis of the process at the first critical time scale.

\section{Notation and preliminaries}\label{sec prelim}
In this section, we present a brief overview of the notations and recall important results from \cite{Hsu25}.

\subsection{Asymptotic notation}
Here, we define some asymptotic notation that will be used throughout the paper:
\begin{enumerate}
	\item $f(\epsilon) \ll g(\epsilon)$ or $f(\epsilon)=o(g(\epsilon))$ if $\lim_{\epsilon \to 0} \frac{f(\epsilon)}{g(\epsilon)} = 0 $.
	\item $f(\epsilon) \gg g(\epsilon)$ if $\lim_{\epsilon \to 0} \frac{g(\epsilon)}{f(\epsilon)} = 0 $.
	\item $f(\epsilon) \sim g(\epsilon)$ if $\lim_{\epsilon \to 0} \frac{f(\epsilon)}{g(\epsilon)} =1$.
	\item $f(\epsilon)  \asymp g(\epsilon)$ if there exist $c,c'>0$ and $\epsilon_0>0$ such that $c g(\epsilon) \leq f(\epsilon) \leq c' g(\epsilon)$ for all $\epsilon \in (0,\epsilon_0)$.
	\item $f(\epsilon)=O(g(\epsilon))$ or $f(\epsilon) \lesssim g(\epsilon)$ if there exist $c>0$ and $\epsilon_0>0$ such that $\abs{f(\epsilon)} \leq c g(\epsilon)$ for all $\epsilon \in (0,\epsilon_0)$.
\end{enumerate}
In situations involving multiple parameters (e.g., $\epsilon$, $\delta$), we use subscripts to indicate the relevant asymptotic dependencies.
If we only consider the limit $\epsilon \to 0$, we do not add any subscripts.
For instance, we write $f(\epsilon,\delta) \ll_{\epsilon,\delta} g(\epsilon,\delta)$ or $f(\epsilon,\delta) = o_{\epsilon,\delta} (g(\epsilon,\delta))$ to mean $\lim_{\delta \to 0} \lim_{\epsilon \to 0} \frac{f(\epsilon,\delta)}{g(\epsilon,\delta)}=0$.

\subsection{The graph \texorpdfstring{$\Gamma$}{Lg}}\label{sec graph}
Throughout this paper, we consider only a simple, finite, connected graph $\Gamma = (V,E) \subset \mathbb{R}^d$ with $d=2$ or $3$.
The graph consists of vertices $O_1, \cdots, O_{\abs{V}}$ and edges $I_1,\cdots, I_{\abs{E}}$, where $V$ and $E$ denote the sets of vertices and edges, respectively, and $\abs{V}$ and $\abs{E}$ their cardinalities.
Each edge is a straight line segment connecting a pair of distinct vertices. 
See Figure \ref{fig:graph}.
For each $k=1,\cdots,\abs{E}$, we write $I_k \sim O_j$ to indicate that $O_j$ is an endpoint of edge $I_k$.
\begin{figure}[ht]
    \centering
    \subfigure[An example of a simple, finite, connected graph $\Gamma \subset \mathbb{R}^2$.]{\includegraphics[width=0.65\linewidth]{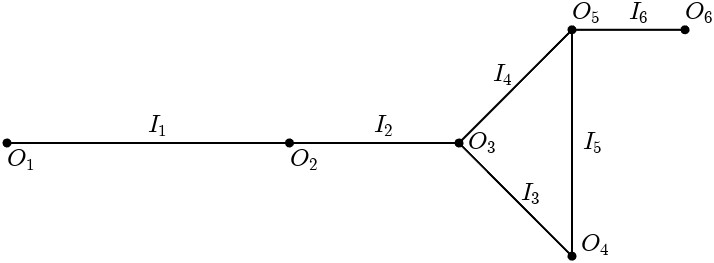}  
    \label{fig:graph}} 
    \subfigure[The corresponding narrow tube domain $G_\epsilon \subset \mathbb{R}^2$.]{\includegraphics[width=0.65\linewidth]{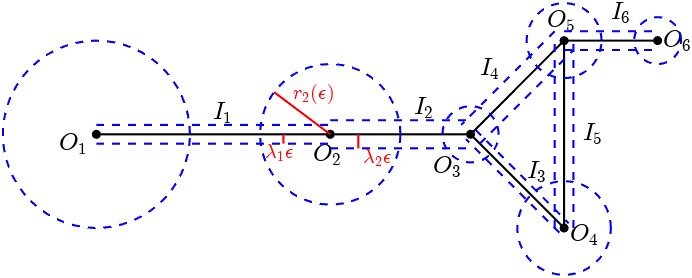}   
    \label{fig:tube}}     
    \caption{Illustration of the graph $\Gamma$ and its associated narrow tube domain $G_\epsilon$.}
\end{figure}

We define a metric $d_\Gamma$ on the graph $\Gamma$ as follows:
if $x,x'$ lie on the same edge, then $d_\Gamma(x,x') \coloneqq d(x,x')$, where $d(\cdot,\cdot) $ denotes the Euclidean distance.
If $x,x'$ lie on different edges, then 
\begin{equation*}
	d_\Gamma(x,x') \coloneqq 
		\min \{ d(x, O_{i_1})+d(O_{i_1}, O_{i_2})+\cdots+d(O_{i_n}, x') \}, 
\end{equation*}
where the minimum is taken over all possible paths from $x$ to $x'$ through a sequence of vertices $O_{i_1}, \cdots, O_{i_n}$ connecting them.

We now construct a coordinate system in a neighborhood of the vertex $O_j$ on the graph $\Gamma$.
First, for each $j =1,\cdots,\abs{V}$ and each $k$ such that $I_k \sim O_j$, we define the direction $e_{j,k}$ as the unit vector pointing outward from the vertex $O_j$ along the edge $I_k$.
Using this, we parametrize the graph $\Gamma$ as follows:
let $x \in I_k$ with $O_i$ and $O_j$ as its two vertices and assume that $O_j$ is the closest vertex to $x$; that is, $d_\Gamma(x,O_j) < d_\Gamma(x,O_i) $. (If the distances $d_\Gamma(x,O_j) $ and $ d_\Gamma(x,O_i) $ are equal, choose the vertex with the smaller index.) Then define
\begin{equation*}
	x(\tilde{x}) \coloneqq \tilde{x} e_{j,k} + O_{j},
\end{equation*}
with $\tilde{x} \geq 0$.

\subsection{The narrow tube \texorpdfstring{$G_\epsilon$}{Lg}}\label{sec narrow tube}
We consider a domain $G_\epsilon$ consisting of narrow tubes $\Gamma^\epsilon_k$ surrounding the edges $I_k \subset \Gamma$, along with small neighborhoods $\mathcal{E}_j^\epsilon$ around the vertices $O_j \subset \Gamma$. See Figure \ref{fig:tube}. 
More precisely, for each $j =1,\cdots,\abs{V}$, let $\mathcal{E}_j^\epsilon \coloneqq B(O_j, r_j(\epsilon))$ denote the Euclidean ball in $\mathbb{R}^d$ centered at $O_j$ with radius $ r_j(\epsilon)$, where $r_j(\epsilon) > 0$ for all $\epsilon >0$, with $ \lim_{\epsilon \to 0} r_j(\epsilon) = 0$.
For each $k=1,\cdots,\abs{E}$, we consider the edge $I_k$ and define its orthogonal complement $I_k^\perp$ by
\begin{equation*}
	I_k^\perp \coloneqq \{ y \in \mathbb{R}^d : \langle y, e_{j,k} \rangle_{\mathbb{R}^d} = 0 \},
\end{equation*} 
where $e_{j,k}$ is the unit vector pointing outward from a vertex $O_j$ (with $I_k \sim O_j$) along the edge $I_k$.
This definition is well-posed, as the choice of $j$ does not affect $I_k^\perp$; the vectors $e_{j,k}$ corresponding to different endpoints of $I_k$ differ only by a sign.
Additionally, we define the ball $B_k(y,r)$ as
\begin{equation*}
	B_k(y,r) \coloneqq \{ y' \in \mathbb{R}^d: y'-y \in I_k^\perp, \abs{y'-y} \leq r \}.
\end{equation*}
Now let $\Gamma_k^\epsilon \coloneqq I_k + B_k(0,\lambda_k \epsilon)$, where $\lambda_k > 0$ is a constant.
We then define 
\begin{equation*}
    G_\epsilon \coloneqq \bigcup_{j=1}^{\abs{V}} \mathcal{E}_j^\epsilon \cup \bigcup_{k=1}^{\abs{E}} \Gamma_k^\epsilon.
\end{equation*}

Although some results apply when $r_j(\epsilon) \lesssim \epsilon^{(d-1)/d}$, for clarity we impose the following assumption throughout this paper.
\begin{Assumption}
For every $j=1,\cdots,\abs{V}$,
\begin{equation*}
	r_j(\epsilon) \gg \epsilon^{(d-1)/d} .
\end{equation*}
\end{Assumption}

We first define the projection $\Pi: G_\epsilon \to \Gamma$ that maps $z \in G_\epsilon$ to the point in $\Gamma$ closest to $z$. 
While this point may not be unique, we can choose it in a unique way. For example, we already have labels on the edges $I_k$. We then assign the projection to the point with the least edge index. 
To be more precise, for every $x \in \Gamma \setminus \bigcup_{j=1}^{\abs{V}} O_j$, let $i(x)$ be the index map such that $i(x)=k$ if and only if $x \in I_k$. 
For each $j=1,\cdots,\abs{V}$, set $i(O_j) = k$ if and only if $O_j \in I_k$ and $k < l$ for all $l$ such that $O_j \in I_l$.
Then, for any $z \in G_\epsilon$, we define $\Pi(z) = x $ if and only if $d(z,x) < d(z,x')$ for all $x' \in \Gamma$, or if $d(z,x) = d(z,x')$ for some $x' \in \Gamma$, but $i(x)<i(x')$ for all such $x' \in \Gamma$.
Note that the projection doesn't depend on $\epsilon$; in fact, one can define the projection in the whole $\mathbb{R}^d$.

We now introduce a local coordinate system on $G_\epsilon$. For every $z \in G_\epsilon$, assume $\Pi(z) = x  \in I_k$ and let $O_j$ be the closest vertex to $x$ with the convention that if $x$ is the midpoint of $I_k$, we choose the vertex with the smaller index.
Then we can write
\begin{equation*}
	x = \tilde{x} e_{j,k} + O_{j}, 
\end{equation*}
with $\tilde{x} \geq 0$.
Next, we construct an orthonormal basis $\{ e_{j,k},n^1_{j,k},\cdots,n^{d-1}_{j,k} \}$ of $\mathbb{R}^d$, where the vectors $n^1_{j,k},\cdots,n^{d-1}_{j,k}$ span the orthogonal complement of the edge direction.
Define $y \coloneqq z-x$, which lies in the span of $\{ n^1_{j,k},\cdots,n^{d-1}_{j,k} \}$
and write
\begin{equation*}
	y = \tilde{y}_1 n_{j,k}^1+\cdots+\tilde{y}_{d-1} n_{j,k}^{d-1},
\end{equation*}
with $\tilde{y} = (\tilde{y}_1 ,\cdots, \tilde{y}_{d-1} ) \in \mathbb{R}^{d-1}$.
In this way, each point $z = x+y $ is uniquely represented by the local coordinate tuple 
\begin{equation*}
(j, k, \tilde{x},\tilde{y}) \in \{1,\cdots,\abs{V}\} \times \{1,\cdots,\abs{E}\} \times [0,\infty) \times \mathbb{R}^{d-1}.
\end{equation*}
Conversely, given $(j, k, \tilde{x},\tilde{y})$, with $I_k \sim O_j$, $\tilde{x} \geq 0$ and $\tilde{y} \in \mathbb{R}^{d-1}$, one can reconstruct the point $z \in G_\epsilon$ via the formula
\begin{equation*}
	z = O_j + \tilde{x} e_{j,k} + \sum_{l=1}^{d-1} \tilde{y}_l n^{l}_{j,k}.
\end{equation*}
Thus, this local coordinate system provides a one-to-one correspondence between $z \in G_\epsilon$ and the tuple $(j, k, \tilde{x},\tilde{y})$, with $I_k \sim O_j$, $\tilde{x} \geq 0$ and $\tilde{y} \in \mathbb{R}^{d-1}$.

It is important to note that $G_\epsilon$ is not initially a smooth domain due to the presence of cusps where the tubular regions meet the vertex neighborhoods. 
To address this, we mollify the boundary near each cusp to ensure that $B(O_j, r_j(\epsilon)) \subset G_\epsilon$ and that the modified domain is locally smooth near these singularities, uniformly in $\epsilon$, as $\epsilon \to 0$. See Figure \ref{fig:smooth dom} below.
More precisely, for each $j=1,\cdots,\abs{V}$ and each $k$ such that $I_k \sim O_j$, we consider the local coordinate $(j,k,\tilde{x},\tilde{y})$.
Let $\tilde{x}' \geq 0$ be such that the point $x'+y'$, corresponding to the coordinate $(j, k, \tilde{x}', \tilde{y}') $, lies on the boundary $ \partial B(O_j, r_j(\epsilon))$ with $\abs{\tilde{y}'} = 2 \lambda_k \epsilon$.
Define  $\tilde{x}'' \coloneqq  r_j(\epsilon)+\frac{1}{2} \epsilon$.
We replace the portion of the boundary consisting of points $z \in \partial G_\epsilon$ that correspond (via the local coordinate system) to tuples $(j, k,\tilde{x},\tilde{y})$ with $\tilde{x} \in [\tilde{x}',\tilde{x}'']$ by a smooth transition region.
This modification ensures that the new domain still contains the ball $B(O_j,  r_j(\epsilon))$ and remains smooth near the original cusp, uniformly in $\epsilon$.
Specifically, let
\begin{equation*}
	z^*_{j,k} \coloneqq B(O_j,  r_j(\epsilon)) \cap I_k
\end{equation*}
and consider the change of variables
\begin{equation*}
z \mapsto \frac{z-z^*_{j,k}}{\epsilon}, \ \ \  z \in B(z^*_{j,k},10 \lambda_k \epsilon).
\end{equation*}
Under this transformation, the image of the boundary segment $\partial G_\epsilon \cap B(z^*_{j,k},10 \lambda_k \epsilon)$ converges smoothly as $\epsilon \to 0$.
(This is possible since, after rescaling, the distance between $\tilde{x}'$ and $\tilde{x}''$ exceeds $\frac{1}{2}$.)
As a result, we obtain a new smooth bounded domain with the desired properties.
Throughout this paper, we will work with this modified smooth domain and continue to denote it by $G_\epsilon$.
\begin{figure}
  \centering
  \includegraphics[width=0.65\linewidth]{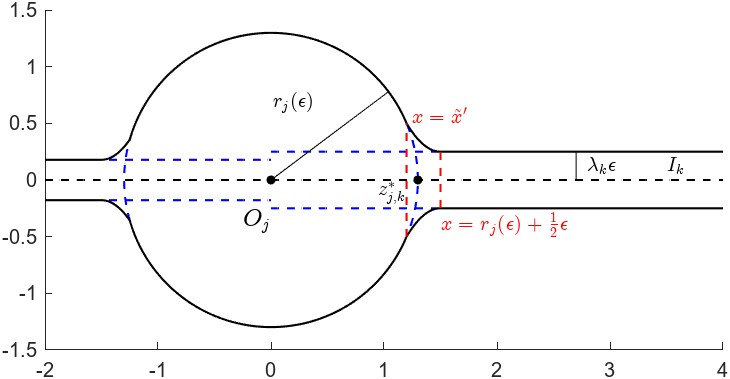}
  \caption{The mollified domain $G_\epsilon$ near the junction of $O_j$ and $I_k$ with $ r_j(\epsilon)=1.3$, $\lambda_k \epsilon = 0.25$, $\epsilon = 0.4$.}\label{fig:smooth dom}
\end{figure}

Finally, we denote $\Pi^\epsilon: G_\epsilon \to \Gamma$ by any continuous function satisfying the following properties
\begin{equation*}
	\Pi^\epsilon(z)
		= \Pi(z), \ \text{ if } \ d_\Gamma(\Pi(z), O_j) \geq r_j(\epsilon) + 2\epsilon  \text{ for all } j = 1,\cdots,\abs{V},
\end{equation*}
and
\begin{equation*}
	d_\Gamma (\Pi^\epsilon(z), O_j) \leq  r_j(\epsilon) + 2\epsilon, \ \text{ if } \ d_\Gamma(\Pi(z),O_j) \leq  r_j(\epsilon) + 2\epsilon \text{ for some } j \in \{1,\cdots,\abs{V}\} .
\end{equation*}
For concreteness, one may consider the following explicit choice for $\Pi^\epsilon$,
	\begin{equation*}
	\begin{aligned}
\Pi^\epsilon(z) \coloneqq 
	\begin{dcases}
	\displaystyle
		\Pi(z), \text{ if } \ d_\Gamma(\Pi(z), O_j) \geq  r_j(\epsilon) + 2\epsilon \text{ for all } j=1,\cdots,\abs{V} , \\
		O_j + \left( \frac{r_j(\epsilon)+2\epsilon}{4 \epsilon} d_{\Gamma}(\Pi(z),O_j)-\frac{ r_j(\epsilon)^2-4 \epsilon^2}{4\epsilon} \right) e_{j,k}, \\      
		\ \ \ \ \ \ \ \text{ if } \ d_\Gamma(\Pi(z),O_j) \in [  r_j(\epsilon) - 2\epsilon,  r_j(\epsilon) + 2\epsilon] \text{ for some } j \in \{1,\cdots,\abs{V}\}, \\
		 \ \ \ \ \ \ \ \ \ \ \ \ \ \ \ \  \text{ and } \ \Pi(z) \in I_k, \\
		O_j,  \ \   \text{ if } \ d_\Gamma(\Pi(z),O_j) \leq  r_j(\epsilon) - 2\epsilon \text{ for some } j \in \{1,\cdots,\abs{V}\} .
	\end{dcases}
\end{aligned}
\end{equation*}

\subsection{Diffusion processes on narrow tubes}\label{sec diff tube prelim}
We consider the diffusion process confined to live on the domain $G_\epsilon$,

\begin{equation}\label{diff eq}
    dZ^\epsilon(t) = \sqrt{2}dB(t) + \nu_\epsilon (Z^\epsilon(t)) d\phi^\epsilon(t), \ Z^\epsilon(0) = z,
\end{equation}
where $z \in G_\epsilon$.
Here, $B(t)$ is a $d$-dimensional Brownian motion defined on a stochastic basis $(\Omega, \mathcal{F}, \{\mathcal{F}_t\}_{t \geq 0}, \mathbb{P})$, $\nu_\epsilon(z)$ is the unit inward normal vector at the point $z \in \partial G_\epsilon$, and $\phi^\epsilon(t)$ is the local time of the process $Z^\epsilon(t)$ on the boundary $\partial G_\epsilon$. 
That is, $\phi^\epsilon(t)$ is an adapted, continuous with probability $1$, non-decreasing process that increases only when $Z^\epsilon(t) \in \partial G$. 
Formally, one can write
\begin{equation*}
    \phi^\epsilon(t) = \int_0^t  \mathbbm{1}_{\{ Z^\epsilon(s) \in \partial G_\epsilon \} } d\phi^\epsilon(s).
\end{equation*}
It can be shown that the generator of $Z^\epsilon(t)$ is the Laplacian $\Delta$, with Neumann boundary conditions on $\partial G_\epsilon$.

\subsection{Previous exit time and exit place estimates}\label{sec prev re}
For each $j=1,\cdots,\abs{V}$, let $L_{j,k} > 0$ for all $k$ such that $I_k \sim O_j$ and 
\begin{equation}
    \vec{L}_j = \bigcup_{k:I_k \sim O_j} \{ (k, L_{j,k}) \}.
\end{equation}

We then define 
\begin{equation*}
\begin{aligned}
    C_{\epsilon,j}^k(L_{j,k}) &\coloneqq  \{ z \in G_\epsilon \cap \Pi^{-1}(I_k) : d_{\Gamma}(\Pi(z),O_j) = L_{j,k} \}, \\
    C_{\epsilon,j}(\vec{L}_j) &\coloneqq \bigcup_{k:I_k \sim O_j} C_{\epsilon,j}^k(L_{j,k}) .
\end{aligned}
\end{equation*}
We also let
\begin{equation*}
\begin{aligned}
\sigma_j^{\epsilon}(\vec{L}_j) &\coloneqq \inf  \{t \geq  0 : Z^\epsilon(t) \in C_{\epsilon,j}(\vec{L}_j) \}.
\end{aligned}
\end{equation*}
For convenience, for every $\delta>0$, we abuse the notation and write
\begin{equation*}
\begin{aligned}
    G_{\epsilon,j}(\delta) &\coloneqq  \{ z \in G_\epsilon : d_{\Gamma}(\Pi(z),O_j) \geq \delta \}, \\
    B_{\epsilon,j}(\delta) &\coloneqq  \{ z \in G_\epsilon : d_{\Gamma}(\Pi(z),O_j) \leq \delta \}, \\
	C_{\epsilon,j}(\delta) &\coloneqq \bigcup_{k:I_k \sim O_j} C_{\epsilon,j}^k(\delta),
\end{aligned}
\end{equation*}
and
\begin{equation*}
\begin{aligned}
\sigma_j^{\epsilon}(\delta) &\coloneqq \inf  \{t \geq  0 : Z^\epsilon(t) \in C_{\epsilon,j}(\delta) \}.
\end{aligned}
\end{equation*}

The following two lemmas were already established in \cite{Hsu25}.
\begin{Lemma}{\cite[Lemma 3.1 and 3.2]{Hsu25}}\label{exit place est}
For every $j=1,\cdots,\abs{V}$, and every $k$ such that $I_k \sim O_j$, we have
    \begin{equation*}
        \displaystyle
        \lim_{\delta \to 0}  \lim_{\epsilon \to 0} \sup_{z \in C_{\epsilon,j}(r_j(\epsilon)+3\epsilon)} 
        \abs{ \mathbb{P}_z \left( Z^\epsilon(\sigma_j^{\epsilon}(\delta)) \in C_{\epsilon,j}^k(\delta) \right)  - p_{j,k}} = 0,
    \end{equation*}
    and
    \begin{equation*}
        \displaystyle
           \lim_{\epsilon \to 0} \sup_{z \in C_{\epsilon,j}( r_j(\epsilon)+3\epsilon)} \abs{\mathbb{E}_z \sigma_j^{\epsilon}(\delta) \left( \alpha_j(\epsilon) \delta \right)^{-1} -1 }=0,
    \end{equation*} 
    where
    \begin{equation*}
    		p_{j,k} = \frac{\lambda_k^{d-1}}{\sum_{l:I_l \sim O_j} \lambda_l^{d-1}}, \ \ \ \ \alpha(\epsilon) = \frac{r_j(\epsilon)^d V_d}{\sum_{k:I_k \sim O_j} \lambda_k^{d-1} \epsilon^{d-1} V_{d-1}}.
    \end{equation*}
\end{Lemma}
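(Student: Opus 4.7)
The plan is to convert each assertion into a boundary value problem on $G_\epsilon \cap B_{\epsilon,j}(\delta)$ and solve it by matched asymptotics together with a flux balance identity. Setting $u_\epsilon(z):=\mathbb{E}_z\sigma_j^\epsilon(\delta)$ and $v_\epsilon^k(z):=\mathbb{P}_z(Z^\epsilon(\sigma_j^\epsilon(\delta))\in C_{\epsilon,j}^k(\delta))$, the generator description from Section~\ref{sec diff tube prelim} gives $-\Delta u_\epsilon=1$ in the interior, $u_\epsilon=0$ on $C_{\epsilon,j}(\delta)$ and $\partial_{\nu_\epsilon}u_\epsilon=0$ on the reflecting boundary, while $v_\epsilon^k$ is the corresponding harmonic function with Dirichlet data $\mathbf{1}_{C_{\epsilon,j}^k(\delta)}$. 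The guiding intuition is that both functions are nearly constant on the vertex ball $\mathcal{E}_j^\epsilon$ (rapid mixing inside the ball) and, inside each tube, depend only on the axial coordinate $\tilde{x}$, varying linearly there.

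I would first make the constancy on $\mathcal{E}_j^\epsilon$ rigorous. After rescaling $z\mapsto(z-O_j)/r_j(\epsilon)$ the ball becomes fixed while each tube opening shrinks to diameter $O(\epsilon/r_j(\epsilon))$, which vanishes under the standing assumption $r_j(\epsilon)\gg\epsilon^{(d-1)/d}$. Harnack and elliptic regularity estimates for the mixed problem on the rescaled limiting geometry yield that the oscillation of $u_\epsilon$ on $\mathcal{E}_j^\epsilon$ is $O(r_j(\epsilon)^2)$ and that of $v_\epsilon^k$ is $O(\epsilon/r_j(\epsilon))$, both negligible against the target leading values $\alpha_j(\epsilon)\delta$ and $1$. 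Second, inside each tube $\Gamma_k^\epsilon$ the transverse Neumann Laplacian has spectral gap of order $\epsilon^{-2}$, so separation of variables produces $u_\epsilon(\tilde{x},\tilde{y})=U_k(\tilde{x})+O(e^{-c\tilde{x}/\epsilon})$ with $U_k''=-1$ on $(r_j(\epsilon)+c\epsilon,\delta)$ and $U_k(\delta)=0$, and an analogous linear reduction for $v_\epsilon^k$. Matching with the ball value at the mouth of the tube forces $U_k(\tilde{x})\approx u_\epsilon(O_j)\,(\delta-\tilde{x})/\delta$ whenever $\delta\gg\epsilon$.

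The leading constants are then pinned down by flux balance. Integrating $-\Delta u_\epsilon=1$ over $\mathcal{E}_j^\epsilon$ and applying the divergence theorem, the reflecting portion of $\partial\mathcal{E}_j^\epsilon$ contributes zero, each tube opening contributes
\begin{equation*}
-\,U_k'(r_j(\epsilon))\,\lambda_k^{d-1} V_{d-1}\,\epsilon^{d-1} \;\approx\; \frac{u_\epsilon(O_j)}{\delta}\,\lambda_k^{d-1} V_{d-1}\,\epsilon^{d-1},
\end{equation*}
and the sum must equal $|\mathcal{E}_j^\epsilon|=V_d\,r_j(\epsilon)^d$. Solving gives $u_\epsilon(O_j)\approx\alpha_j(\epsilon)\delta$. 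The same divergence argument applied to the harmonic $v_\epsilon^k$ (whose volume source vanishes) forces the normal fluxes through different openings to balance, which, together with the linear profiles, gives $v_\epsilon^k(O_j)\to p_{j,k}=\lambda_k^{d-1}/\sum_l\lambda_l^{d-1}$. Finally, because $C_{\epsilon,j}(r_j(\epsilon)+3\epsilon)$ sits just outside $\mathcal{E}_j^\epsilon$ in the mouths of the tubes, Step~1 transfers these values from $O_j$ to any starting point $z$ on this cross-section with lower-order error, uniformly in $z$.

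The main obstacle is the matching across the mollified cusp region introduced in Section~\ref{sec narrow tube}: the geometry is only uniformly smooth there after rescaling by $\epsilon$, so one must carry out the elliptic estimates on the rescaled fixed limiting geometry and transfer them back to $G_\epsilon$. The assumption $r_j(\epsilon)\gg\epsilon^{(d-1)/d}$ is precisely what makes the window-to-ball ratio tend to zero and the matching error lower order than $\alpha_j(\epsilon)\delta$, and the extra $\lim_{\delta\to 0}$ in the exit-place statement absorbs the residual geometric corrections near $O_j$. The fully rigorous execution along these lines is carried out in \cite{Hsu25}.
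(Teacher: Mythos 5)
The paper recalls this lemma from \cite[Lemmas 3.1 and 3.2]{Hsu25} without reproving it, so there is no in-paper argument against which to check your proposal; it can only be evaluated on its own terms and against what is plausible given the surrounding material.

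Your PDE / matched-asymptotics sketch is a sound route to both statements, and the two flux-balance identities are computed correctly: equating the source $V_d r_j(\epsilon)^d$ with the outflux through the tube mouths gives $u_\epsilon(O_j)\approx\alpha_j(\epsilon)\delta$, and the zero-net-flux condition for the harmonic $v_\epsilon^k$ gives $v_\epsilon^k(O_j)\to\lambda_k^{d-1}/\sum_{l}\lambda_l^{d-1}=p_{j,k}$. It is, however, far from clear that this is the route \cite{Hsu25} actually takes: the machinery visible elsewhere in this paper (cycle decompositions, strong Markov arguments, and the explicit reference to \cite[Lemma 4.2]{Hsu25} for Wald's identity) suggests the original proofs may well be probabilistic rather than built from mixed boundary value problems. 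Your matched-asymptotics route makes the constants emerge transparently from flux balance; a probabilistic route is the one that interfaces naturally with the cycle-decomposition refinements (Lemmas \ref{exit place est new}, \ref{small exit est 2}, \ref{small exit est 2 new}) that the present paper builds on top of this base case.

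Two of your intermediate quantitative claims are not right as stated and deserve a second look. The oscillation of $u_\epsilon$ on $\mathcal{E}_j^\epsilon$ is not $O(r_j(\epsilon)^2)$: the entire flux $V_d r_j(\epsilon)^d$ must pass through windows of $(d{-}1)$-measure $\asymp\epsilon^{d-1}$, producing a boundary-layer variation near the tube mouths of order $r_j(\epsilon)^d/\epsilon^{d-2}$ (with a logarithm when $d=2$), and the comparison with $\alpha_j(\epsilon)\delta\asymp r_j(\epsilon)^d\delta/\epsilon^{d-1}$ genuinely relies on the extra factor of $\delta$; the claimed oscillation $O(\epsilon/r_j(\epsilon))$ for $v_\epsilon^k$ likewise conflates the rescaled window size with the actual flux-driven variation, which is closer to $O(\epsilon/\delta)$. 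Both corrections still vanish, so the conclusions survive, but the bookkeeping matters because the lemma demands uniformity over $z\in C_{\epsilon,j}(r_j(\epsilon)+3\epsilon)$, a cross-section inside the mollified cusp region where the geometry is only smooth at the $\epsilon$-scale. Relatedly, your flux argument yields $v_\epsilon^k(z)\to p_{j,k}$ as $\epsilon\to0$ for each \emph{fixed} $\delta$, which would make the outer $\lim_{\delta\to0}$ in the exit-place statement superfluous; the fact that the cited lemma deliberately retains that outer limit is a signal that the matching corrections you have folded into ``lower-order error'' are only small after a subsequent $\delta\to0$, and this is precisely where the rigor you defer to \cite{Hsu25} is doing its work.
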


\section{Exit time and exit place estimates}\label{sec exit}
In this section, our goal is to improve the exit time and exit place estimates.

For each $j=1,\cdots,\abs{V}$, we consider $\vec{L}_j^\epsilon = \bigcup_{k:I_k \sim O_j} \{ (k,L_{j,k}^\epsilon) \}$, which depends weakly on $\epsilon$ in the sense that 
\begin{equation}
    \lim_{\epsilon \to 0} L_{j,k}^\epsilon = L_{j,k} > 0,
\end{equation}
for all $k$ such that $I_k\sim O_j$.
This weak dependence on $\epsilon$ is introduced because, in Section \ref{sec dis} and \ref{sec conti}, we will consider $\sigma^\epsilon(\vec{L}_j^\epsilon)$ starting from $z \in C_{\epsilon,j}(r_j(\epsilon)+3\epsilon)$ with
\begin{equation}
    L_{j,k}^\epsilon = \abs{I_k}-(r_{j'}(\epsilon)+3\epsilon), \ \ \ I_k \sim O_j,O_{j'}.
\end{equation}
However, it is straightforward to verify that the proofs remain essentially unchanged under this modification.

Since we are concerned only with the local behavior near a given vertex, we omit the subscript $j$ throughout this section for clarity.

Let us first define the sequence of stopping times
\begin{equation*}
\begin{aligned}
\sigma_{n}^{\epsilon,\delta}(\vec{L}^\epsilon) &\coloneqq \inf  \{t \geq  \tau_{n}^{\epsilon,\delta}(\vec{L}^\epsilon) : Z^\epsilon(t) \in  C_{\epsilon}(\vec{L}^\epsilon) \cup C_{\epsilon}(r(\epsilon)+ 3\epsilon ) \}, \\
\tau_{n}^{\epsilon,\delta}(\vec{L}^\epsilon) &\coloneqq \inf  \{t \geq  \sigma_{n-1}^{\epsilon,\delta}(\vec{L}^\epsilon) : Z^\epsilon(t) \in C_{\epsilon}(\delta) \},
\end{aligned}
\end{equation*}
with $\tau_{0}^{\epsilon,\delta}(\vec{L}^\epsilon) \coloneqq 0$.

\subsection{Exit place estimate}

\begin{Lemma}\label{exit place est new}
For every $k:I_k \sim O$, we have
    \begin{equation*}
        \displaystyle
         \lim_{\epsilon \to 0} \sup_{z \in C_{\epsilon}(r(\epsilon)+3\epsilon)} 
        \abs{ \mathbb{P}_z \left( Z^\epsilon(\sigma^{\epsilon}(\vec{L}^\epsilon) \in C_{\epsilon}^k(L_k^\epsilon) \right)  - p_{k}(\vec{L})} = 0,
    \end{equation*}
    where
    \begin{equation*}
	p_{k}(\vec{L}) = \frac{\lambda_k^{d-1} / L_k}{\sum_{l:I_l \sim O} \lambda_l^{d-1} / L_l}.
\end{equation*}
\end{Lemma}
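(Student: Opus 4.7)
The plan is to decompose the full exit event at $\sigma^\epsilon(\vec{L}^\epsilon)$ into a geometric-type sum over excursions between the near-vertex cross-section $C_\epsilon(r(\epsilon)+3\epsilon)$ and an intermediate cross-section $C_\epsilon(\delta)$ for an auxiliary parameter $\delta \in (0,\min_k L_k)$, using precisely the stopping time sequence $\{\sigma_n^{\epsilon,\delta}, \tau_n^{\epsilon,\delta}\}$ introduced at the start of Section~\ref{sec exit}. The per-excursion exit probability through edge $k$ factors into two ingredients: (i) the probability of entering the tube $\Gamma_k^\epsilon$ at $C_\epsilon^k(\delta)$, which by Lemma~\ref{exit place est} converges uniformly in $z \in C_\epsilon(r(\epsilon)+3\epsilon)$ to $p_k = \lambda_k^{d-1}/\sum_l \lambda_l^{d-1}$; and (ii) the probability that, once inside $\Gamma_k^\epsilon$, the process reaches $C_\epsilon^k(L_k^\epsilon)$ before returning to $C_\epsilon^k(r(\epsilon)+3\epsilon)$. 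Because the lateral boundary of the straight tube $\Gamma_k^\epsilon$ is a circular cylinder whose inward normal has no $e_{j,k}$-component, the longitudinal coordinate $\tilde{x}(t)$ is an \emph{exact} one-dimensional Brownian motion (away from the mollified junction region, which has longitudinal extent $O(\epsilon)$). The probability in (ii), from any $z \in C_\epsilon^k(\delta)$, is thus exactly $(\delta - r(\epsilon)-3\epsilon)/(L_k^\epsilon - r(\epsilon)-3\epsilon)$, which tends to $\delta/L_k$ as $\epsilon \to 0$.

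Combining (i) and (ii) via the strong Markov property at $\tau_1^{\epsilon,\delta}$, and using that outside the vertex neighborhood the tubes are pairwise disjoint (so exit through $C_\epsilon^k(L_k^\epsilon)$ can only occur from $C_\epsilon^k(\delta)$), one obtains
\begin{equation*}
  \mathbb{P}_z\bigl(Z^\epsilon(\sigma_1^{\epsilon,\delta}) \in C_\epsilon^k(L_k^\epsilon)\bigr) = p_k \frac{\delta}{L_k} + h_{\epsilon,\delta}^k(z), \qquad \sup_{z \in C_\epsilon(r(\epsilon)+3\epsilon)} \abs{h_{\epsilon,\delta}^k(z)} = o_{\epsilon,\delta}(\delta).
\end{equation*}
Setting $P_k^\epsilon(z) \coloneqq \mathbb{P}_z(Z^\epsilon(\sigma^\epsilon(\vec{L}^\epsilon)) \in C_\epsilon^k(L_k^\epsilon))$ and applying the strong Markov property at $\sigma_1^{\epsilon,\delta}$ yields the Dynkin-type identity
\begin{equation*}
  P_k^\epsilon(z) = \mathbb{P}_z\bigl(Z^\epsilon(\sigma_1^{\epsilon,\delta}) \in C_\epsilon^k(L_k^\epsilon)\bigr) + \mathbb{E}_z\Bigl[\mathbbm{1}_{\{Z^\epsilon(\sigma_1^{\epsilon,\delta}) \in C_\epsilon(r(\epsilon)+3\epsilon)\}} P_k^\epsilon\bigl(Z^\epsilon(\sigma_1^{\epsilon,\delta})\bigr)\Bigr].
\end{equation*}
Passing to $\sup_z$ and $\inf_z$ over $z \in C_\epsilon(r(\epsilon)+3\epsilon)$ and invoking the uniform one-excursion asymptotic above shows that, in the limit $\epsilon \to 0$ followed by $\delta \to 0$, both extremal values satisfy the same fixed-point equation
\begin{equation*}
  P = p_k \frac{\delta}{L_k} + \Bigl(1 - \sum_l p_l \frac{\delta}{L_l}\Bigr) P,
\end{equation*}
whose unique solution $P = (p_k/L_k)/\sum_l (p_l/L_l)$ simplifies, after substituting $p_l = \lambda_l^{d-1}/\sum_m \lambda_m^{d-1}$, to exactly $p_k(\vec{L})$. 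Since the left-hand side of the lemma does not depend on $\delta$, letting $\delta \to 0$ in the resulting uniform bound on $\abs{P_k^\epsilon(z) - p_k(\vec{L})}$ concludes the proof.

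The main technical obstacle is quantitative: the mean number of excursions before exit is of order $1/\delta$, so an undisciplined error bound through the geometric iteration could produce $O(1)$ errors. The reason this is circumvented is structural: $\delta$ appears identically in the numerator and denominator of the limiting ratio $(p_k \delta/L_k)/(\sum_l p_l \delta/L_l)$ and therefore cancels, so only \emph{relative} errors in the one-excursion probabilities matter. The additive remainder $o_{\epsilon,\delta}(\delta)$ that Lemma~\ref{exit place est} provides (combined with the \emph{exactness} of the 1D exit formula in step (ii)) is precisely what is needed for this cancellation to survive. A subsidiary point, which is routine but must be checked, is that the one-dimensional reduction in step (ii) is legitimate uniformly in the lateral starting position in $C_\epsilon^k(\delta)$; this is immediate since $\tilde{x}(t)$ is an actual 1D Brownian motion independent of the transverse coordinates inside the straight portion of $\Gamma_k^\epsilon$.
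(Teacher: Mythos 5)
Your proposal is correct and follows essentially the same route as the paper: both decompose the exit via the excursion stopping times $\{\sigma_n^{\epsilon,\delta},\tau_n^{\epsilon,\delta}\}$, invoke Lemma~\ref{exit place est} for the per-cycle exit distribution at $C_\epsilon(\delta)$, and use the exact gambler's-ruin formula for the longitudinal Brownian motion in the straight cylinder to get the per-cycle exit probability $\sim p_k\,\delta/L_k$. The only real difference is in how the cycles are recombined: the paper sums the geometric series $\sum_i \bigl(1-\sum_l p_l^{\epsilon,\delta}(z'')\bigr)^{i-1} p_k^{\epsilon,\delta}(z')$ directly (with a somewhat informal use of the points $z',z''$ that stand in for random landing positions, justified by uniformity over $C_\epsilon(r(\epsilon)+3\epsilon)$), whereas you write the one-step Dynkin identity and close it by taking $\sup_z$ and $\inf_z$ of $P_k^\epsilon$; this is a marginally tidier way to package the same uniformity statement. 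Your remark that the naive accumulation of $o_{\epsilon,\delta}(\delta)$ errors over $\sim 1/\delta$ cycles is harmless because $\delta$ cancels between numerator and denominator is precisely the cancellation the paper's chain of $\sim_{\epsilon,\delta}$ equivalences rests on, and the one-dimensional reduction in step~(ii) is indeed legitimate since the cross-section $C_\epsilon(r(\epsilon)+3\epsilon)$ sits beyond the mollified junction (the mollification is confined to $\tilde x \le r(\epsilon)+\epsilon/2$), so the process is reflected off a pure cylinder throughout the relevant excursion.
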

\begin{proof}
For $z \in  C_{\epsilon}(r(\epsilon)+3\epsilon)$, let
\begin{equation*}
	p_{k}^{\epsilon, \delta}(z)= \mathbb{P}_z \left( Z^\epsilon(\sigma^\epsilon(\delta) \in C_{\epsilon}^k(\delta)) \right) = 
	\mathbb{P}_z \left( Z^\epsilon(\tau_{1}^{\epsilon,\delta}(\vec{L}^\epsilon)) \in C_{\epsilon}^k(\delta) \right)
\end{equation*}
and
\begin{equation*}
	p_{k}^{\epsilon, \delta}(z,\vec{L}^\epsilon)=\mathbb{P}_z \left( Z^\epsilon(\sigma_{1}^{\epsilon,\delta}(\vec{L}^\epsilon)) \in C_{\epsilon}^k(L_k^\epsilon) \right);
\end{equation*}
that is, $p_{k}^{\epsilon, \delta}(z,\vec{L}^\epsilon)$ is the probability that the particle escapes through edge $k$ in one cycle.
Then, by Lemma \ref{exit place est},
\begin{equation}\label{total trans prob}
	p_{k}^{\epsilon, \delta}(z,\vec{L}^\epsilon) = p_{k}^{\epsilon, \delta}(z) \frac{\delta-r(\epsilon)+3\epsilon}{L_k^\epsilon- r(\epsilon)+3\epsilon} \sim_{\epsilon,\delta} \frac{\lambda_k^{d-1}}{\sum_{l:I_l \sim O} \lambda_l^{d-1}} \frac{\delta}{L_k}
\end{equation}
uniformly in $z \in C_\epsilon(r(\epsilon)+3\epsilon)$.
Moreover, by the strong Markov property,
\begin{equation*}
\begin{aligned}
	&\mathbb{P}_z \left( Z^\epsilon(\sigma^{\epsilon}(\vec{L}^\epsilon)) \in C_{\epsilon}^k(L_k^\epsilon) \right)
    = \sum_{i=1}^\infty \mathbb{P}_z \left( Z^\epsilon(\sigma^{\epsilon}(\vec{L}^\epsilon)) \in C_{\epsilon}^k(L_k^\epsilon),  \sigma_i^{\epsilon,\delta}(\vec{L}^\epsilon) = \sigma^{\epsilon}(\vec{L}^\epsilon) \right) \\
	 &=\sum_{i=1}^\infty \mathbb{E}_z \left(  \mathbbm{1}_{ \{ \sigma_{i-1}^{\epsilon,\delta}(\vec{L}^\epsilon) < \sigma^{\epsilon}(\vec{L}^\epsilon)  \} } 
     \mathbb{P}_{\sigma_{i-1}^{\epsilon,\delta}(\vec{L}^\epsilon)}
     \left( Z^\epsilon(\sigma_1^{\epsilon,\delta}(\vec{L}^\epsilon)) \in C_{\epsilon}^k(L_k^\epsilon) \right) \right) \\
     & \sim_{\epsilon,\delta} \sum_{i=1}^\infty \mathbb{P}_z \left(   \sigma_{i-1}^{\epsilon,\delta}(\vec{L}^\epsilon) < \sigma^{\epsilon}(\vec{L}^\epsilon)  \right)  p_{k}^{\epsilon, \delta}(z',\vec{L}^\epsilon) 
     \sim_{\epsilon,\delta} \sum_{i=1}^\infty  \left(  1- \sum_{l:I_l \sim O} p_{l}^{\epsilon, \delta}(z'',\vec{L}^\epsilon) \right)^{i-1}  p_{k}^{\epsilon, \delta}(z',\vec{L}^\epsilon) \\
      &\sim_{\epsilon,\delta} \frac{p_{k}^{\epsilon, \delta}(z',\vec{L}^\epsilon)}{\sum_{l:I_l \sim O} p_{l}^{\epsilon, \delta}(z'',\vec{L}^\epsilon)}
	\sim_{\epsilon,\delta} p_{k}(\vec{L})
\end{aligned}
\end{equation*}
uniformly in $z, z',z'' \in C_\epsilon(r(\epsilon)+3\epsilon)$.
\end{proof}

\begin{Remark}\label{exit 1 cy}
\em{
By \eqref{total trans prob}, we have
\begin{equation*}
\mathbb{P}_z \left( Z^\epsilon(\sigma_{1}^{\epsilon,\delta}(\vec{L}^\epsilon)) \in C_{\epsilon}(\vec{L}^\epsilon) \right) 
=\sum_{k:I_k \sim O}p_{k}^{\epsilon, \delta}(z,\vec{L}^\epsilon)\sim_{\epsilon,\delta} \frac{\sum_{k:I_k \sim O} \lambda_k^{d-1} /L_k}{\sum_{l:I_l \sim O} \lambda_l^{d-1}} \delta
\end{equation*}
uniformly in $z \in C_\epsilon(r(\epsilon)+3\epsilon)$.
}
\end{Remark}

\subsection{Exit time estimate}

\begin{Lemma}\label{small exit est 2}
We have
    \begin{equation}
        \displaystyle
           \lim_{\epsilon \to 0} \sup_{z \in C_{\epsilon}( r(\epsilon)+3\epsilon)} \abs{\mathbb{E}_z \sigma^{\epsilon}(\vec{L}^\epsilon) \left( \alpha(\epsilon) \frac{\sum_{l:I_l \sim O} \lambda_l^{d-1}}{\sum_{k:I_k \sim O} \lambda_k^{d-1}/L_k}, \right)^{-1} -1 }=0,
    \end{equation} 
    where 
    \begin{equation}
\alpha(\epsilon) = 
 \frac{r(\epsilon)^d V_{d}}{\sum_{k:I_k \sim O} \lambda_k^{d-1} \epsilon^{d-1} V_{d-1}}
.
\end{equation}

\end{Lemma}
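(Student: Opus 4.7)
The plan is to decompose the exit time using the regenerative cycles defined by the stopping times $\tau_n^{\epsilon,\delta}$ and $\sigma_n^{\epsilon,\delta}$ already introduced, and to compute $\mathbb{E}_z\sigma^\epsilon(\vec{L}^\epsilon)$ as (expected time per cycle) $\times$ (expected number of cycles until exit), in the spirit of Wald's identity. Let
\begin{equation*}
  N^{\epsilon,\delta} := \inf\bigl\{i \geq 1 : Z^\epsilon(\sigma_i^{\epsilon,\delta}(\vec{L}^\epsilon)) \in C_\epsilon(\vec{L}^\epsilon)\bigr\},
\end{equation*}
so that $\sigma^\epsilon(\vec{L}^\epsilon) = \sigma_{N^{\epsilon,\delta}}^{\epsilon,\delta}(\vec{L}^\epsilon)$. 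On the event $\{N^{\epsilon,\delta}\geq i\}$, with the convention $\sigma_0^{\epsilon,\delta} \equiv 0$ and $z \in C_\epsilon(r(\epsilon)+3\epsilon)$, the position $Z^\epsilon(\sigma_{i-1}^{\epsilon,\delta})$ lies in $C_\epsilon(r(\epsilon)+3\epsilon)$. The strong Markov property therefore reduces the whole problem to uniform single-cycle estimates for starting points $z' \in C_\epsilon(r(\epsilon)+3\epsilon)$.

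The two single-cycle inputs I would establish are the following. First, $\mathbb{E}_{z'}\sigma_1^{\epsilon,\delta}(\vec{L}^\epsilon) \sim \alpha(\epsilon)\delta$ uniformly in $z' \in C_\epsilon(r(\epsilon)+3\epsilon)$: splitting as $\mathbb{E}_{z'}\tau_1^{\epsilon,\delta} + \mathbb{E}_{z'}(\sigma_1^{\epsilon,\delta}-\tau_1^{\epsilon,\delta})$, the first term equals $\mathbb{E}_{z'}\sigma^\epsilon(\delta) \sim \alpha(\epsilon)\delta$ by Lemma \ref{exit place est}, while the second is the hitting time of $C_\epsilon(\vec{L}^\epsilon)\cup C_\epsilon(r(\epsilon)+3\epsilon)$ starting from $C_\epsilon(\delta)$, which is $O(1)$ in $\epsilon$ by a one-dimensional comparison inside the tube and hence $o(\alpha(\epsilon)\delta)$ since $\alpha(\epsilon)\to\infty$. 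Second, by Remark \ref{exit 1 cy}, the per-cycle exit probability
\begin{equation*}
  q^{\epsilon,\delta}(z') := \mathbb{P}_{z'}\bigl(Z^\epsilon(\sigma_1^{\epsilon,\delta}(\vec{L}^\epsilon)) \in C_\epsilon(\vec{L}^\epsilon)\bigr) \sim_{\epsilon,\delta} q^{\epsilon,\delta} := \frac{\sum_{k:I_k \sim O} \lambda_k^{d-1}/L_k}{\sum_{l:I_l \sim O}\lambda_l^{d-1}}\,\delta
\end{equation*}
uniformly in $z'$. Iterating the strong Markov property gives $(1 - \sup_{z'} q^{\epsilon,\delta}(z'))^n \leq \mathbb{P}_z(N^{\epsilon,\delta} > n) \leq (1 - \inf_{z'} q^{\epsilon,\delta}(z'))^n$ uniformly in $z$ and $n$, and summing yields $\mathbb{E}_z N^{\epsilon,\delta} = \sum_{n \geq 0}\mathbb{P}_z(N^{\epsilon,\delta} > n) \sim_{\epsilon,\delta} 1/q^{\epsilon,\delta}$ uniformly in $z \in C_\epsilon(r(\epsilon)+3\epsilon)$. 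Putting this together via the strong Markov identity
\begin{equation*}
  \mathbb{E}_z \sigma^\epsilon(\vec{L}^\epsilon) = \sum_{i \geq 1} \mathbb{E}_z\Bigl[\mathbbm{1}_{\{N^{\epsilon,\delta}\geq i\}}\, \mathbb{E}_{Z^\epsilon(\sigma_{i-1}^{\epsilon,\delta})}\sigma_1^{\epsilon,\delta}(\vec{L}^\epsilon)\Bigr] \sim_{\epsilon,\delta} \alpha(\epsilon)\delta\cdot \mathbb{E}_z N^{\epsilon,\delta} \sim_{\epsilon,\delta} \alpha(\epsilon)\cdot \frac{\sum_{l:I_l \sim O}\lambda_l^{d-1}}{\sum_{k:I_k \sim O}\lambda_k^{d-1}/L_k},
\end{equation*}
uniformly in $z$. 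Since the left-hand side does not depend on $\delta$, the two-parameter limit $\lim_{\delta \to 0}\lim_{\epsilon \to 0}$ immediately upgrades to the single-parameter limit $\lim_{\epsilon \to 0}$ claimed in the lemma.

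The main obstacle is the interchange of the infinite sum with the two asymptotic limits: I need the per-cycle expected-time and exit-probability estimates to be uniform not only in the starting point $z'$ but also, through that uniformity, in the cycle index $i$, so that the geometric tail of $N^{\epsilon,\delta}$ and the Wald-type decomposition are controlled uniformly in $\delta$ as $\epsilon\to 0$. A secondary point is the $O(1)$ bound on the residual excursion time from $C_\epsilon(\delta)$ to $C_\epsilon(\vec{L}^\epsilon)\cup C_\epsilon(r(\epsilon)+3\epsilon)$, which I would handle by projecting onto the longitudinal coordinate in each tube $\Gamma_k^\epsilon$ and comparing to a one-dimensional reflected Brownian motion on an interval of length $O(1)$; the weak $\epsilon$-dependence of $\vec{L}^\epsilon$ is harmless here since $L_{j,k}^\epsilon \to L_{j,k}$.
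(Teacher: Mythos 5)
Your proposal is correct and follows essentially the same approach the paper itself indicates (and fleshes out in Lemma \ref{small exit est 2 new}): decompose $\sigma^\epsilon(\vec L^\epsilon)$ into cycles via $\tau_n^{\epsilon,\delta},\sigma_n^{\epsilon,\delta}$, apply a Wald-type identity, identify the per-cycle expected time as $\sim\alpha(\epsilon)\delta$ from Lemma \ref{exit place est} and the per-cycle escape probability from Remark \ref{exit 1 cy}, and multiply; the observation that the $\delta$-independence of both sides upgrades the iterated limit to a single $\epsilon$-limit is the same device the paper uses implicitly. The only point worth being explicit about when writing this up is the justification that the return leg $\mathbb{E}_{z'}[\sigma_1^{\epsilon,\delta}-\tau_1^{\epsilon,\delta}]$ is $O(\delta)$ (hence negligible against $\alpha(\epsilon)\delta$), which as you note reduces to a one-dimensional exit-time bound in the straight tube.
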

\begin{proof}
The idea is to apply Wald’s identity (see, e.g., \cite[Lemma 4.2]{Hsu25} and the proof of Lemma \ref{small exit est 2 new} below), which states that the total exit time equals the time spent in each trajectory multiplied by the expected number of trials.
We decompose $\sigma^\epsilon(\vec{L}^\epsilon)$ into cycles using the stopping times $\tau_n^{\epsilon,\delta}(\vec{L}^\epsilon), \sigma_n^{\epsilon,\delta}(\vec{L}^\epsilon)$.
By Lemma \ref{exit place est}, the dominant term to the time spent in each trajectory is $\alpha(\epsilon) \delta$.
In addition, by Remark \ref{exit 1 cy}, the number of trials is 
\begin{equation}
	\left( \mathbb{P}_z \left( Z^\epsilon(\sigma_{1}^{\epsilon,\delta}(\vec{L}^\epsilon)) \in C_{\epsilon}(\vec{L}^\epsilon) \right) \right)^{-1} 
	\sim_{\epsilon,\delta} \frac{\sum_{l:I_l \sim O} \lambda_l^{d-1}}{\delta \sum_{k:I_k \sim O} \lambda_k^{d-1}/L_k}.
\end{equation}
Combining these two estimates yields the desired result.
The detailed proof is omitted.
\end{proof}

Let $\sigma_{k}^\epsilon(\vec{L}^\epsilon)$ be the first hitting time on $C_{\epsilon}(\vec{L}^\epsilon)$ conditioned on hitting $C_{\epsilon}^k(L_k^\epsilon)$; that is, for any $I$ be a Borel subset in $[0,\infty)$
\begin{equation*}
\mathbb{P}_z\left(\sigma_{k}^\epsilon(\vec{L}^\epsilon) \in I \right) =
\mathbb{P}_z\left(\sigma^\epsilon(\vec{L}^\epsilon) \in I \middle| Z^\epsilon(\sigma^\epsilon(\vec{L}^\epsilon)) \in C_{\epsilon}^k(L_k^\epsilon) \right).
\end{equation*}

\begin{Lemma}\label{small exit est 2 new}
    We have
    \begin{equation}
        \displaystyle
           \lim_{\epsilon \to 0} \sup_{z \in C_{\epsilon}(r(\epsilon)+3\epsilon)} \abs{\mathbb{E}_z \sigma_{k}^\epsilon(\vec{L}^\epsilon) \left( \alpha(\epsilon) \frac{\sum_{l:I_l \sim O} \lambda_l^{d-1}}{\sum_{k:I_k \sim O} \lambda_k^{d-1}/L_k} \right)^{-1} -1 }=0,
    \end{equation} 
    where 
    \begin{equation}
\alpha(\epsilon) = 
 \frac{r(\epsilon)^d V_{d}}{\sum_{k:I_k \sim O} \lambda_k^{d-1} \epsilon^{d-1} V_{d-1}}
.
\end{equation}
\end{Lemma}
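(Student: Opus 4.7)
The plan is to reduce the conditional expected exit time to the unconditional one by rewriting
\begin{equation*}
\mathbb{E}_z \sigma_k^\epsilon(\vec{L}^\epsilon)
= \frac{\mathbb{E}_z\bigl[\sigma^\epsilon(\vec{L}^\epsilon)\,\mathbbm{1}_{\{Z^\epsilon(\sigma^\epsilon(\vec{L}^\epsilon)) \in C_\epsilon^k(L_k^\epsilon)\}}\bigr]}{\mathbb{P}_z\bigl(Z^\epsilon(\sigma^\epsilon(\vec{L}^\epsilon)) \in C_\epsilon^k(L_k^\epsilon)\bigr)},
\end{equation*}
and then estimating numerator and denominator separately. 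The denominator converges to $p_k(\vec{L})$ by Lemma \ref{exit place est new}, so the problem reduces to showing that the numerator is asymptotic to $p_k(\vec{L})\, \alpha(\epsilon)\, \tfrac{\sum_l \lambda_l^{d-1}}{\sum_l \lambda_l^{d-1}/L_l}$.

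The main tool is the cycle decomposition via the stopping times $\tau_n^{\epsilon,\delta}(\vec{L}^\epsilon),\sigma_n^{\epsilon,\delta}(\vec{L}^\epsilon)$. On the event $\{\sigma_n^{\epsilon,\delta}(\vec{L}^\epsilon) = \sigma^\epsilon(\vec{L}^\epsilon),\ Z^\epsilon(\sigma^\epsilon(\vec{L}^\epsilon)) \in C_\epsilon^k(L_k^\epsilon)\}$, I write
\begin{equation*}
\sigma^\epsilon(\vec{L}^\epsilon) = \sum_{i=1}^{n-1}\bigl(\sigma_i^{\epsilon,\delta}(\vec{L}^\epsilon) - \sigma_{i-1}^{\epsilon,\delta}(\vec{L}^\epsilon)\bigr) + \bigl(\sigma_n^{\epsilon,\delta}(\vec{L}^\epsilon) - \sigma_{n-1}^{\epsilon,\delta}(\vec{L}^\epsilon)\bigr).
\end{equation*}
The strong Markov property applied at $\sigma_{i-1}^{\epsilon,\delta}(\vec{L}^\epsilon)$ (for $i<n$) tells us that each of the first $n-1$ increments starts from a point in $C_\epsilon(r(\epsilon)+3\epsilon)$, contributes in expectation a quantity $\sim_{\epsilon,\delta}\alpha(\epsilon)\delta$ by Lemma \ref{exit place est}, and is conditioned to stay inside $G_\epsilon \setminus C_\epsilon(\vec{L}^\epsilon)$, which by Remark \ref{exit 1 cy} happens with probability $1-q_\epsilon$ where $q_\epsilon \sim_{\epsilon,\delta}\delta \tfrac{\sum_k \lambda_k^{d-1}/L_k}{\sum_l \lambda_l^{d-1}}$. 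For the last, successful increment, the increment starts again in $C_\epsilon(r(\epsilon)+3\epsilon)$ and its expectation, conditioned on exiting through $C_\epsilon^k(L_k^\epsilon)$, is still $\sim_{\epsilon,\delta}\alpha(\epsilon)\delta$: this is because (as in the proof of Lemma \ref{exit place est new}) the successful exit through edge $k$ in one cycle arises with probability $p_k^{\epsilon,\delta}(z,\vec{L}^\epsilon)\sim_{\epsilon,\delta}\tfrac{\lambda_k^{d-1}\delta}{L_k \sum_l \lambda_l^{d-1}}$ from paths that, conditioned on this event, still spend essentially the same time as the unconditional one-cycle trajectory (the small asymmetry introduced by the conditioning is of lower order, since Lemma \ref{exit place est} shows the time until exit from a $\delta$-neighborhood of $O$ does not depend on the exit edge to leading order).

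Summing over $n$ and using that the conditional distribution of the number of cycles, given eventual exit through edge $k$, is approximately geometric with parameter $q_\epsilon$ (and therefore has mean $\sim 1/q_\epsilon$ with matching second-moment control), I obtain
\begin{equation*}
\mathbb{E}_z\bigl[\sigma^\epsilon(\vec{L}^\epsilon)\,\mathbbm{1}_{\{Z^\epsilon(\sigma^\epsilon(\vec{L}^\epsilon)) \in C_\epsilon^k(L_k^\epsilon)\}}\bigr]
\sim_{\epsilon,\delta} p_k(\vec{L}) \cdot \alpha(\epsilon)\delta \cdot \frac{1}{q_\epsilon}
\sim_{\epsilon,\delta} p_k(\vec{L})\,\alpha(\epsilon)\,\frac{\sum_l \lambda_l^{d-1}}{\sum_l \lambda_l^{d-1}/L_l},
\end{equation*}
uniformly in $z \in C_\epsilon(r(\epsilon)+3\epsilon)$. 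Dividing by $p_k(\vec{L})$ gives the claim, and the limit $\delta \to 0$ is then harmless because all error terms in the Wald-type argument are $o_{\epsilon,\delta}(1)$.

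The part I expect to be the main obstacle is justifying that conditioning the final cycle on exit through edge $k$ does not alter its expected duration at leading order. A clean way to handle this is to run the analogue of Lemma \ref{exit place est} on the joint law of $(\sigma_1^{\epsilon,\delta}(\vec{L}^\epsilon), Z^\epsilon(\sigma_1^{\epsilon,\delta}(\vec{L}^\epsilon)))$: the excursion to $C_\epsilon(\vec{L}^\epsilon)$ is extremely rare (probability $O(\delta)$), so the time spent during a non-exit cycle dominates, and on such cycles the choice of exit edge on a subsequent successful attempt is essentially independent of the path so far. A second moment bound on the number of cycles, of the same form used in Lemma \ref{small exit est 2}, is needed to make the interchange of sum and limit rigorous, but this is standard.
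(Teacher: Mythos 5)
Your proposal is correct and follows essentially the same route as the paper: reduce $\mathbb{E}_z \sigma_k^\epsilon(\vec{L}^\epsilon)$ to the ratio $\mathbb{E}_z[\sigma^\epsilon \mathbbm{1}_{\{\text{exit through }k\}}]/\mathbb{P}_z(\text{exit through }k)$, run the cycle decomposition via $(\tau_i^{\epsilon,\delta},\sigma_i^{\epsilon,\delta})$, argue that conditioning on the exit edge is asymptotically harmless, and finish with a Wald-type identity and Lemma \ref{exit place est new}. You also correctly identify the crux, namely that the conditioning might couple cycle durations to the eventual exit edge.

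Where the paper is sharper is precisely on that crux. Rather than appealing to an informal ``the asymmetry is of lower order'' argument, the paper splits the Wald sum into three pieces $I_1^\epsilon, I_2^\epsilon, I_3^\epsilon$ and isolates the dominant one, $I_3^\epsilon = \sum_i \mathbb{E}_z[(\tau_i^{\epsilon,\delta}-\sigma_{i-1}^{\epsilon,\delta})\mathbbm{1}_{\{N^\epsilon\geq i+1\}}\mathbbm{1}_{\{\text{exit through }k\}}]$. The observation that makes this piece tractable is the measurability $\{N^\epsilon\geq i+1\} = \{N^\epsilon\leq i\}^c \in \mathcal{F}_{\sigma_i^{\epsilon,\delta}(\vec{L}^\epsilon)}$: on this event $Z^\epsilon(\sigma_i^{\epsilon,\delta})\in C_\epsilon(r(\epsilon)+3\epsilon)$, so the strong Markov property at $\sigma_i^{\epsilon,\delta}$ together with Lemma \ref{exit place est new} replaces $\mathbbm{1}_{\{\text{exit through }k\}}$ by the constant $p_k(\vec{L})$, up to a uniform $o(1)$ error. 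This factors the exit-edge indicator out of the sum entirely and reduces $I_3^\epsilon$ to the unconditional Wald identity from Lemma \ref{small exit est 2}. Your heuristic (``the conditional distribution of the number of cycles given exit through $k$ is approximately geometric with the same parameter'' and ``the last cycle is an $O(\delta)$ fraction of the total'') is consistent with this, but as written it is not a proof; in particular ``run the analogue of Lemma \ref{exit place est} on the joint law of $(\sigma_1^{\epsilon,\delta},Z^\epsilon(\sigma_1^{\epsilon,\delta}))$'' is gesturing at a fact you have not actually established. The measurability observation is the missing ingredient that lets you avoid controlling that joint law directly. So: same strategy, but the paper's use of the filtration structure is what converts your plausibility argument into a rigorous one.
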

\begin{proof}
We would like to apply Wald’s identity as before.
However, the possible dependence between the exit time and the exit place prevents us from doing so directly.
Let $N^{\epsilon}$ be such that $\sigma_{N^{\epsilon}}^{\epsilon,\delta}(\vec{L}^\epsilon) = \sigma^\epsilon(\vec{L}^\epsilon)$.
Then for all $z \in C_\epsilon( r(\epsilon) + 3 \epsilon)$, we have
\begin{equation}\label{eq wald type identity new}
\begin{aligned}
	&\mathbb{E}_{z} \left( \sigma^\epsilon(\vec{L}^\epsilon) \mathbbm{1}_{ \{Z^\epsilon(\sigma^\epsilon(\vec{L}^\epsilon)) \in C_{\epsilon}^k(L_k^\epsilon)\} } \right)
	=\sum_{l=1}^\infty \mathbb{E}_{z} \left (\sigma_{N^{\epsilon}}^{\epsilon,\delta}(\vec{L}^\epsilon)   \mathbbm{1}_{ \{N^{\epsilon}=l \} } \mathbbm{1}_{ \{Z^\epsilon(\sigma^\epsilon(\vec{L}^\epsilon)) \in C_{\epsilon}^k(L_k^\epsilon)\} } \right) \\
	&= \sum_{l=1}^\infty \mathbb{E}_z \left[  \sum _{i=0}^l  \left( \sigma_{i}^{\epsilon,\delta}(\vec{L}^\epsilon) - \tau_{i}^{\epsilon,\delta}(\vec{L}^\epsilon) \right) \mathbbm{1}_{ \{N^{\epsilon}=l \} } \mathbbm{1}_{ \{Z^\epsilon(\sigma^\epsilon(\vec{L}^\epsilon)) \in C_{\epsilon}^k(L_k^\epsilon)\} }  \right] \\
	& \ \ \ + \sum_{l=1}^\infty \mathbb{E}_z \left[  \sum _{i=1}^l \left( \tau_{i}^{\epsilon,\delta}(\vec{L}^\epsilon) - \sigma_{i-1}^{\epsilon,\delta}(\vec{L}^\epsilon) \right) \mathbbm{1}_{ \{N^{\epsilon}=l \} }  \mathbbm{1}_{ \{Z^\epsilon(\sigma^\epsilon(\vec{L}^\epsilon)) \in C_{\epsilon}^k(L_k^\epsilon)\} } \right]  \\
	&= \sum_{i=1}^\infty \sum _{l=i}^\infty \mathbb{E}_z \left[    \left( \sigma_{i}^{\epsilon,\delta}(\vec{L}^\epsilon) - \tau_{i}^{\epsilon,\delta}(\vec{L}^\epsilon) \right) \mathbbm{1}_{ \{N^{\epsilon}=l \} } \mathbbm{1}_{ \{Z^\epsilon(\sigma^\epsilon(\vec{L}^\epsilon)) \in C_{\epsilon}^k(L_k^\epsilon)\} } \right] \\
	& \ \ \ + \sum_{i=1}^\infty \sum _{l=i}^\infty \mathbb{E}_z \left[   \left( \tau_{i}^{\epsilon,\delta}(\vec{L}^\epsilon) - \sigma_{i-1}^{\epsilon,\delta}(\vec{L}^\epsilon) \right) \mathbbm{1}_{ \{N^{\epsilon}=l \} }  \mathbbm{1}_{ \{Z^\epsilon(\sigma^\epsilon(\vec{L}^\epsilon)) \in C_{\epsilon}^k(L_k^\epsilon)\} }  \right]  \\
	&=  \sum_{i=1}^\infty \mathbb{E}_z \left[    \left( \sigma_{i}^{\epsilon,\delta}(\vec{L}^\epsilon) - \tau_{i}^{\epsilon,\delta}(\vec{L}^\epsilon) \right) \mathbbm{1}_{ \{N^{\epsilon} \geq i \} } \mathbbm{1}_{ \{Z^\epsilon(\sigma^\epsilon(\vec{L}^\epsilon)) \in C_{\epsilon}^k(L_k^\epsilon)\} } \right] \\
	& \ \ \ + \sum_{i=1}^\infty \mathbb{E}_z \left[   \left( \tau_{i}^{\epsilon,\delta}(\vec{L}^\epsilon) - \sigma_{i-1}^{\epsilon,\delta}(\vec{L}^\epsilon) \right) \mathbbm{1}_{ \{N^{\epsilon} = i \} }  \mathbbm{1}_{ \{Z^\epsilon(\sigma^\epsilon(\vec{L}^\epsilon)) \in C_{\epsilon}^k(L_k^\epsilon)\} }  \right] \\
	& \ \ \ \ \ + \sum_{i=1}^\infty \mathbb{E}_z \left[   \left( \tau_{i}^{\epsilon,\delta}(\vec{L}^\epsilon) - \sigma_{i-1}^{\epsilon,\delta}(\vec{L}^\epsilon) \right) \mathbbm{1}_{ \{N^{\epsilon} \geq i+1 \} }  \mathbbm{1}_{ \{Z^\epsilon(\sigma^\epsilon(\vec{L}^\epsilon)) \in C_{\epsilon}^k(L_k^\epsilon)\} }  \right] \\
	& \eqqcolon I_1^\epsilon + I_2^\epsilon + I_3^\epsilon.
\end{aligned}
\end{equation}
Notice that $\{N^{\epsilon} \geq i+1 \} = \{N^{\epsilon} \leq i\}^c \in \mathcal{F}_{\sigma_{i}^{\epsilon,\delta}(\vec{L}^\epsilon)}$, by the strong Markov property, 
\begin{equation}\label{eq wald type identity new 2}
\begin{aligned}
	&I_3^\epsilon = \sum_{i=1}^\infty \mathbb{E}_z \left[   \left( \tau_{i}^{\epsilon,\delta}(\vec{L}^\epsilon) - \sigma_{i-1}^{\epsilon,\delta}(\vec{L}^\epsilon) \right) \mathbbm{1}_{ \{N^{\epsilon} \geq i+1 \} }  \mathbbm{1}_{ \{Z^\epsilon(\sigma^\epsilon(\vec{L}^\epsilon)) \in C_{\epsilon}^k(L_k^\epsilon)\} }  \right] \\
	& =  \sum_{i=1}^\infty \mathbb{E}_z \left[   \left( \tau_{i}^{\epsilon,\delta}(\vec{L}^\epsilon) - \sigma_{i-1}^{\epsilon,\delta}(\vec{L}^\epsilon) \right)  \mathbbm{1}_{ \{N^{\epsilon} \geq i+1 \} } 
	\mathbb{P}_{Z^\epsilon(\sigma_{i}^{\epsilon,\delta}(\vec{L}^\epsilon))} \left( Z^\epsilon(\sigma^\epsilon(\vec{L}^\epsilon)) \in C_{\epsilon}^k(L_k^\epsilon) \right) \right]  \\
	&\sim \sum_{i=1}^\infty \mathbb{E}_z \left[   \left( \tau_{i}^{\epsilon,\delta}(\vec{L}^\epsilon) - \sigma_{i-1}^{\epsilon,\delta}(\vec{L}^\epsilon) \right)  \mathbbm{1}_{ \{N^{\epsilon} \geq i+1 \} }  \right] p_{k}(\vec{L})
\end{aligned}
\end{equation}
since, for all $i < N^{\epsilon}$, $ Z^\epsilon(\sigma_{i}^{\epsilon,\delta}(\vec{L}^\epsilon)) \in C_{\epsilon}(r(\epsilon)+3\epsilon)$ and thus, by Lemma \ref{exit place est new}, we have
\begin{equation*}
	\mathbb{P}_{Z^\epsilon(\sigma_{i}^{\epsilon,\delta}(\vec{L}^\epsilon))} \left( Z^\epsilon(\sigma^\epsilon(\vec{L}^\epsilon)) \in C_{\epsilon}^k(L_k^\epsilon) \right)
	\sim p_{k}(\vec{L}), \ \ \ \mathbb{P} \ a.s.
\end{equation*}
Next, by the Wald's identity, Lemma \ref{exit place est} and Remark \ref{exit 1 cy}, we have
\begin{equation}\label{eq wald type identity new 3}
\begin{aligned}
	&\sum_{i=1}^\infty \mathbb{E}_z \left[   \left( \tau_{i}^{\epsilon,\delta}(\vec{L}^\epsilon) - \sigma_{i-1}^{\epsilon,\delta}(\vec{L}^\epsilon) \right)  \mathbbm{1}_{ \{N^{\epsilon} \geq i+1 \} }  \right] \\
	&= \sum_{i=1}^\infty \mathbb{E}_z \left[   \left( \tau_{i}^{\epsilon,\delta}(\vec{L}^\epsilon) - \sigma_{i-1}^{\epsilon,\delta}(\vec{L}^\epsilon) \right)  \mathbbm{1}_{ \{N^{\epsilon} \geq i \} }  \right]
	- \sum_{i=1}^\infty \mathbb{E}_z \left[   \left( \tau_{i}^{\epsilon,\delta}(\vec{L}^\epsilon) - \sigma_{i-1}^{\epsilon,\delta}(\vec{L}^\epsilon) \right)  \mathbbm{1}_{ \{N^{\epsilon} = i \} }  \right] \\
	& \sim \mathbb{E}_{z'} \sigma^{\epsilon}(\delta) \mathbb{E}_z N^{\epsilon}
	\sim \alpha(\epsilon) \frac{\sum_{l:I_l \sim O} \lambda_l^{d-1}}{ \sum_{k:I_k \sim O} \lambda_k^{d-1}/L_k},
\end{aligned}
\end{equation}
uniformly in $z,z' \in C_\epsilon(r(\epsilon)+3\epsilon)$.
Note that $\{N^\epsilon > i-1\} = \{N^\epsilon \leq i-1\}^c \in \mathcal{F}_{\sigma_{i-1}^{\epsilon,\delta}(\vec{L}^\epsilon)} \subset \mathcal{F}_{\tau_{i}^{\epsilon,\delta}(\vec{L}^\epsilon)}$ and thus for the second term,
\begin{equation*}
\begin{aligned}
	&\sum_{i=1}^\infty \mathbb{E}_z \left[   \left( \tau_{i}^{\epsilon,\delta}(\vec{L}^\epsilon) - \sigma_{i-1}^{\epsilon,\delta}(\vec{L}^\epsilon) \right)  \mathbbm{1}_{ \{N^{\epsilon} = i \} }  \right]
    = \sum_{i=1}^\infty \mathbb{E}_z \left[   \left( \tau_{i}^{\epsilon,\delta}(\vec{L}^\epsilon) - \sigma_{i-1}^{\epsilon,\delta}(\vec{L}^\epsilon) \right)  \mathbbm{1}_{ \{N^{\epsilon} = i \} }  \mathbbm{1}_{ \{N^{\epsilon} > i-1 \} } \right] \\
    & = \sum_{i=1}^\infty \mathbb{E}_z \left[   \left( \tau_{i}^{\epsilon,\delta}(\vec{L}^\epsilon) - \sigma_{i-1}^{\epsilon,\delta}(\vec{L}^\epsilon) \right) \mathbbm{1}_{ \{N^{\epsilon} > i-1 \} } \mathbb{E}_{\tau_i^{\epsilon,\delta}} \mathbbm{1}_{ \{N^{\epsilon} = 0 \} }   \right] \\
    &\lesssim \delta \sum_{i=1}^\infty \mathbb{E}_z \left[   \left( \tau_{i}^{\epsilon,\delta}(\vec{L}^\epsilon) - \sigma_{i-1}^{\epsilon,\delta}(\vec{L}^\epsilon) \right) \mathbbm{1}_{ \{N^{\epsilon} > i-1 \} } \right] 
    \lesssim \delta \sup_{z' \in C_\epsilon(r(\epsilon)+3\epsilon)} \mathbb{E}_{z'} \sigma^{\epsilon}(\delta) \sum_{i=1}^\infty \mathbb{P}_z \left( N^{\epsilon} \geq i \right) \\
	&= \delta \sup_{z' \in C_\epsilon(r(\epsilon)+3\epsilon)} \mathbb{E}_{z'} \sigma^{\epsilon}(\delta) \mathbb{E}_z N^\epsilon
	\ll_{\epsilon,\delta} \alpha(\epsilon) \frac{\sum_{l:I_l \sim O} \lambda_l^{d-1}}{ \sum_{k:I_k \sim O} \lambda_k^{d-1}/L_k}.
\end{aligned}
\end{equation*}
It is not hard to show that $I_1^\epsilon$ and $I_2^\epsilon$ are lower order terms.

Finally, combining \eqref{eq wald type identity new}, \eqref{eq wald type identity new 2} and \eqref{eq wald type identity new 3}, we have
\begin{equation}
    \mathbb{E}_{z} \left( \sigma^\epsilon(\vec{L}^\epsilon) \mathbbm{1}_{ \{Z^\epsilon(\sigma^\epsilon(\vec{L}^\epsilon)) \in C_{\epsilon}^k(L_k^\epsilon)\} } \right)
    \sim_{\epsilon,\delta} \alpha(\epsilon) \frac{\sum_{l:I_l \sim O} \lambda_l^{d-1}}{ \sum_{k:I_k \sim O} \lambda_k^{d-1}/L_k} p_k(\vec{L}).
\end{equation}
Dividing both sides by $\mathbb{P}_z \left(Z^\epsilon(\sigma^\epsilon(\vec{L}^\epsilon)) \in C_{\epsilon}^k(L_k^\epsilon) \right)$ and applying Lemma \ref{exit place est new} again, we have the desired result.
\end{proof}

\section{Asymptotic exponential law for the exit time}\label{sec exit exp}
Since we continue to focus on the local behavior near a vertex, we omit the subscript $j$ throughout this section for clarity.

\begin{Theorem}\label{large exit time est}
For each $t>0$,
	\begin{equation}\label{exp est 1}
		 \lim_{\epsilon \to 0} \sup_{z \in C_\epsilon( r(\epsilon)+3\epsilon)  } \abs{ \mathbb{P}_z \left( \sigma^{\epsilon}(\vec{L}^\epsilon) \geq t \alpha(\epsilon) \frac{\sum_{l:I_l \sim O} \lambda_l^{d-1}}{\sum_{k:I_k \sim O} \lambda_k^{d-1} / L_k} \right) - e^{-t}} = 0.
	\end{equation}
	Moreover, 
	\begin{equation}\label{exp est 2}
		 \lim_{\epsilon \to 0} \sup_{z \in C_\epsilon( r(\epsilon)+3\epsilon)  } \abs{ \mathbb{P}_z \left( \sigma_k^{\epsilon}(\vec{L}^\epsilon) \geq t \alpha(\epsilon) \frac{\sum_{l:I_l \sim O} \lambda_l^{d-1}}{\sum_{k:I_k \sim O} \lambda_k^{d-1} / L_k} \right) - e^{-t}} = 0,
	\end{equation}
    where 
    \begin{equation}
\alpha(\epsilon) = 
 \frac{r(\epsilon)^d V_{d}}{\sum_{k:I_k \sim O} \lambda_k^{d-1} \epsilon^{d-1} V_{d-1}}.
\end{equation}
	In particular, the exit time $\sigma^{\epsilon}(\vec{L}^\epsilon)$ and the exit edge $Z^\epsilon(\sigma^{\epsilon}(\vec{L}^\epsilon)) \in C_{\epsilon}^k(L_k^\epsilon)$ are asymptotically independent.
\end{Theorem}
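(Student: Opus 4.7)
The plan is to treat the cycle decomposition already used in Section \ref{sec exit} as a renewal-type argument and to identify $\sigma^\epsilon(\vec{L}^\epsilon)$ as a Geom$(q_\epsilon^\delta)$-indexed sum of excursion times $T_i := \sigma_i^{\epsilon,\delta}(\vec{L}^\epsilon) - \sigma_{i-1}^{\epsilon,\delta}(\vec{L}^\epsilon)$. Here, by Remark \ref{exit 1 cy}, the per-cycle exit probability satisfies
\begin{equation*}
q_\epsilon^\delta := \mathbb{P}_{z'}\bigl( Z^\epsilon(\sigma_1^{\epsilon,\delta}(\vec{L}^\epsilon)) \in C_\epsilon(\vec{L}^\epsilon) \bigr) \sim_{\epsilon,\delta} u\delta, \qquad u := \frac{\sum_{k:I_k \sim O} \lambda_k^{d-1}/L_k}{\sum_{l:I_l \sim O} \lambda_l^{d-1}},
\end{equation*}
uniformly in $z' \in C_\epsilon(r(\epsilon)+3\epsilon)$, and by Lemma \ref{exit place est} each $T_i$ has mean $\sim \alpha(\epsilon)\delta$. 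Sending $\epsilon \to 0$ uniformizes the cycle data in the starting point, and then sending $\delta \to 0$ turns the geometric distribution with vanishing parameter into the exponential law.

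For \eqref{exp est 1}, let $N^\epsilon$ be the index of the first exiting cycle. The strong Markov property applied at each $\sigma_i^{\epsilon,\delta}(\vec{L}^\epsilon)$ together with the uniform asymptotic of $q_\epsilon^\delta$ yields $\mathbb{P}_z(N^\epsilon > n) \sim_{\epsilon,\delta} (1-q_\epsilon^\delta)^n$, and choosing $n = \lfloor t \beta(\epsilon)/(\alpha(\epsilon)\delta) \rfloor$ with $\beta(\epsilon) := \alpha(\epsilon)/u$ gives $(1-q_\epsilon^\delta)^n \to e^{-t}$. It then suffices to transfer this from $N^\epsilon$ to $\sigma^\epsilon(\vec{L}^\epsilon) = \sum_{i=1}^{N^\epsilon} T_i$. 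Since the $T_i$ are conditionally independent given the starting positions of the excursions (by the strong Markov property) and the uniform second-moment bound $\mathbb{E}_{z'}T_i^2 \lesssim (\alpha(\epsilon)\delta)^2$ holds (obtained in the same spirit as Lemma \ref{small exit est 2}), a Chebyshev estimate yields, for any $\eta > 0$,
\begin{equation*}
\mathbb{P}_z\Bigl( \Bigl| \sum_{i=1}^{N^\epsilon} T_i - N^\epsilon \alpha(\epsilon)\delta \Bigr| \geq \eta \beta(\epsilon) \Bigr) \longrightarrow 0 \quad \text{as } \epsilon, \delta \to 0,
\end{equation*}
since the typical scale of $N^\epsilon$ is $1/(u\delta)$. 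Combined with the geometric behavior of $N^\epsilon$, this gives \eqref{exp est 1}.

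For \eqref{exp est 2} the key point is the asymptotic independence of the exit time and the exit edge. I would expand $\mathbb{P}_z(\sigma^\epsilon(\vec{L}^\epsilon) \geq t\beta(\epsilon),\, Z^\epsilon(\sigma^\epsilon(\vec{L}^\epsilon)) \in C_\epsilon^k(L_k^\epsilon))$ over $\{N^\epsilon = i\}$ and condition at $\sigma_{i-1}^{\epsilon,\delta}(\vec{L}^\epsilon)$, as in the proof of Lemma \ref{small exit est 2 new}; by \eqref{total trans prob} the resulting inner probability is $p_k^{\epsilon,\delta}(z'',\vec{L}^\epsilon)/q_\epsilon^\delta \sim_{\epsilon,\delta} p_k(\vec{L})$, uniformly in the starting point $z'' \in C_\epsilon(r(\epsilon)+3\epsilon)$. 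Since the last excursion time $T_{N^\epsilon}$ is of order $\alpha(\epsilon)\delta = o_\delta(\beta(\epsilon))$, it is negligible against the threshold $t\beta(\epsilon)$, so the event $\{\sigma^\epsilon(\vec{L}^\epsilon) \geq t\beta(\epsilon)\}$ is asymptotically $\mathcal{F}_{\sigma_{N^\epsilon-1}^{\epsilon,\delta}}$-measurable, which allows the factor $p_k(\vec{L})$ to be pulled out of the sum to give
\begin{equation*}
\mathbb{P}_z\bigl(\sigma^\epsilon(\vec{L}^\epsilon) \geq t\beta(\epsilon),\, Z^\epsilon(\sigma^\epsilon(\vec{L}^\epsilon)) \in C_\epsilon^k(L_k^\epsilon)\bigr) \sim_{\epsilon,\delta} p_k(\vec{L}) \cdot \mathbb{P}_z\bigl(\sigma^\epsilon(\vec{L}^\epsilon) \geq t\beta(\epsilon)\bigr).
\end{equation*}
Dividing by $\mathbb{P}_z(Z^\epsilon(\sigma^\epsilon(\vec{L}^\epsilon)) \in C_\epsilon^k(L_k^\epsilon)) \to p_k(\vec{L})$ from Lemma \ref{exit place est new} and invoking \eqref{exp est 1} yields \eqref{exp est 2}. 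The main technical obstacle is the uniform second-moment control of the excursion times needed for the Chebyshev step and the careful bookkeeping of lower-order error terms when exchanging the limits in $\epsilon$ and $\delta$; these are of the same nature as --- but somewhat more delicate than --- the estimates appearing in \eqref{eq wald type identity new}--\eqref{eq wald type identity new 3}.
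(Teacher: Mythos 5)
Your proposal is correct in outline and arrives at the theorem by a genuinely different route from the paper's. The paper does not re-derive the exponential limit by hand: it packages the ``geometric number of $O(\delta)$-probability cycles yields an exponential law'' step into the abstract Theorem \ref{thm abs} (a parametrized version of \cite[Lemma 6.8]{SMP}) and verifies its three hypotheses in Lemmas \ref{lem trans prob}--\ref{lem sec moment}. For the conditional statement \eqref{exp est 2} the paper's key device is the Radon--Nikodym factor $\mathbbm{1}_{\{Z^\epsilon(\sigma^\epsilon(\vec{L}^\epsilon)) \in C_\epsilon^k(L_k^\epsilon)\}}/\mathbb{P}_{\hat{X}_0^{\epsilon,\delta,\vec{L}^\epsilon}}(\cdot)$ built into $\xi_{k,n}^{\epsilon,\delta,\vec{L}^\epsilon}$; this turns the conditional law of $\sigma^\epsilon(\vec{L}^\epsilon)$ given the exit edge into an unconditional expectation of a reweighted cycle sum, so the same abstract lemma applies unchanged, and the paper accordingly proves \eqref{exp est 2} first and obtains \eqref{exp est 1} as the trivial-weight special case. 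You instead derive the geometric tail $(1-q_\epsilon^\delta)^n$ directly, transfer from the cycle count $N^\epsilon$ to elapsed time by a Chebyshev argument, and for \eqref{exp est 2} you expand over $\{N^\epsilon = i\}$, condition at $\sigma_{i-1}^{\epsilon,\delta}(\vec{L}^\epsilon)$, and pull the uniform factor $p_k(\vec{L})$ through the sum. Your route is more elementary but exposes two technical points the reweighting trick hides.

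First, replacing $\{\sigma^\epsilon(\vec{L}^\epsilon) \ge t\beta(\epsilon)\}$ by the $\mathcal{F}_{\sigma_{i-1}^{\epsilon,\delta}(\vec{L}^\epsilon)}$-measurable event $\{\sigma_{i-1}^{\epsilon,\delta}(\vec{L}^\epsilon) \ge t\beta(\epsilon)\}$ on $\{N^\epsilon = i\}$ incurs a straddle error $\mathbb{P}_z(\sigma_{N^\epsilon-1}^{\epsilon,\delta}(\vec{L}^\epsilon) < t\beta(\epsilon) \le \sigma^\epsilon(\vec{L}^\epsilon))$; the natural way to kill it --- bound by $\mathbb{P}_z(T_{N^\epsilon} > \eta\beta(\epsilon)) + \mathbb{P}_z(\sigma^\epsilon(\vec{L}^\epsilon) \in [(t-\eta)\beta(\epsilon), t\beta(\epsilon)])$, Markov on the first term and \eqref{exp est 1} on the second --- forces a logical dependency of \eqref{exp est 2} on \eqref{exp est 1} that your argument uses implicitly but should state. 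Second, the Chebyshev step concerns a sum with a random number of non-i.i.d.\ terms, so it really needs a stopped-martingale or optional-stopping variance bound rather than a bare fixed-$n$ second-moment estimate, and the uniformity in $n$ up to scale $1/(u\delta)$ must be tracked so that the accumulated $o_{\epsilon,\delta}(\delta)$ per-cycle errors do not wash out the exponent $-t$. Both points are fillable and amount to the bookkeeping that \cite[Lemma 6.8]{SMP} packages, so the proof stands, but the exposition leaves more of that machinery exposed than the paper's.
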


The proof follows the same approach as in \cite[Section 7]{Hsu25}, which relies on \cite[Lemma 6.8]{SMP}.
We will prove \eqref{exp est 2}; the other one can be obtained in a similar argument.
To do so, recall that the sequence of stopping times
\begin{equation*}
\begin{aligned}
\sigma_{n}^{\epsilon,\delta}(\vec{L}^\epsilon) &\coloneqq \inf  \{t \geq  \tau_{n}^{\epsilon,\delta}(\vec{L}^\epsilon) : Z^\epsilon(t) \in  C_{\epsilon}(\vec{L}^\epsilon) \cup C_{\epsilon}( r(\epsilon)+ 3\epsilon ) \}, \\
\tau_{n}^{\epsilon,\delta}(\vec{L}^\epsilon) &\coloneqq \inf  \{t \geq  \sigma_{n-1}^{\epsilon,\delta}(\vec{L}^\epsilon) : Z^\epsilon(t) \in C_{\epsilon}(\delta) \},
\end{aligned}
\end{equation*}
with $\tau_{0}^{\epsilon,\delta}(\vec{L}^\epsilon) \coloneqq 0$.

Let $\mathcal{S}_{\epsilon} = C_\epsilon(r(\epsilon)+3\epsilon)$.
Define the discrete time Markov chain $ \hat{X}_n^{\epsilon,\delta,\vec{L}^\epsilon} \coloneqq Z^\epsilon( \sigma_{n}^{\epsilon,\delta}(\vec{L}^\epsilon) \wedge \sigma^\epsilon(\vec{L}^\epsilon) )$ and the filtration $ \mathcal{F}_n^{\epsilon,\delta,\vec{L}^\epsilon} \coloneqq \left\{ \sigma \left( \hat{X}_m^{\epsilon,\delta,\vec{L}^\epsilon} \right): m \leq n \right\}$.
Define $\xi_{k,n}^{\epsilon,\delta,\vec{L}^\epsilon}$ as follow,
\begin{equation*}
\begin{aligned}
  \xi_{k,0}^{\epsilon,\delta,\vec{L}^\epsilon} &= 0 , \\
 \xi_{k,n}^{\epsilon,\delta,\vec{L}^\epsilon} 
	&= 
    \left( \mathbbm{1}_{ \{ \hat{X}_{n-1}^{\epsilon,\delta,\vec{L}^\epsilon} \in C_\epsilon(\vec{L}^\epsilon) \}}e(\epsilon,\delta) + \mathbbm{1}_{ \{ \hat{X}_{n-1}^{\epsilon,\delta,\vec{L}^\epsilon} \in C_\epsilon( r(\epsilon)+3\epsilon)  \}} \left( \sigma_{n}^{\epsilon,\delta}(\vec{L}^\epsilon) - \sigma_{n-1}^{\epsilon,\delta}(\vec{L}^\epsilon) \right)  \right)  \\
	& \ \ \ \ \ \ \ \ \ \ \times  \frac{\mathbbm{1}_{\{ Z^\epsilon(\sigma^\epsilon(\vec{L}^\epsilon)) \in C_{\epsilon}^k(L_k^\epsilon)\}}}{\mathbb{P}_{\hat{X}_{0}^{\epsilon,\delta,\vec{L}^\epsilon}}(Z^\epsilon(\sigma^\epsilon(\vec{L}^\epsilon)) \in C_{\epsilon}^k(L_k^\epsilon))}, 
\end{aligned}
\end{equation*}
with 
\begin{equation}\label{value e}
e(\epsilon,\delta) = 
\frac{r(\epsilon)^d V_d}{\sum_{k:I_k \sim O} \lambda_k^{d-1} \epsilon^{d-1} V_{d-1}} \delta = \alpha(\epsilon)\delta .
\end{equation}
Let 
\begin{equation*}
\zeta_n^{\epsilon,\delta,\vec{L}^\epsilon} = 
\begin{dcases}
1, \text{ if } \hat{X}_{n}^{\epsilon,\delta,\vec{L}^\epsilon} \in C_\epsilon(r(\epsilon)+3\epsilon), \\
0, \text{ otherwise}.
\end{dcases}
\end{equation*}
Then it is clear that for $z \in \mathcal{S}_{\epsilon}$,
\begin{equation*}
	\sum_{n=0}^\infty \xi_{k,n+1}^{\epsilon,\delta,\vec{L}^\epsilon} \zeta_n^{\epsilon,\delta,\vec{L}^\epsilon} = \sigma_k^{\epsilon}(\vec{L}^\epsilon), \ \ \ \mathbb{P}_z-a.s.
\end{equation*}

The following theorem closely parallels \cite[Lemma 6.8]{SMP}, with the only difference being the inclusion of an additional parameter. 
The proof proceeds along the same lines as the original.
\begin{Theorem}\label{thm abs}
	Suppose that there is a non-negative function $e(\epsilon,\delta)$ such that
	\begin{equation}\label{fir moment}
	\lim_{\delta \to 0} \lim_{\epsilon \to 0}  \sup_{z \in \mathcal{S}_{\epsilon}} \sup_{ n \geq 0} \abs{ \frac{\mathbb{E}_z \left( \xi_{k,n+1}^{\epsilon,\delta,\vec{L}^\epsilon} \middle| \mathcal{F}_n^{\epsilon,\delta,\vec{L}^\epsilon} \right)}{e(\epsilon,\delta)} - 1} = 0,
	\end{equation}
	and 
	\begin{equation}\label{sec moment}
		\lim_{\delta \to 0} \lim_{\epsilon \to 0}  \sup_{z \in \mathcal{S}_{\epsilon}} \sup_{ n \geq 0} \mathbb{E}_z \left( \left( \xi_{k,n+1}^{\epsilon,\delta,\vec{L}^\epsilon} \right)^2 \middle| \mathcal{F}_{n}^{\epsilon,\delta,\vec{L}^\epsilon} \right) (e(\epsilon,\delta))^{-2} < \infty.
	\end{equation}
	Moreover, suppose that there is a function $p(\epsilon,\delta,\vec{L}^\epsilon) > 0$ such that $\lim_{\delta \to 0} \lim_{\epsilon \to 0} p(\epsilon,\delta,\vec{L}^\epsilon) = 0$ and
	\begin{equation}\label{trans prob}
		\lim_{\delta \to 0} \lim_{\epsilon \to 0}  \sup_{z \in \mathcal{S}_{\epsilon}} \sup_{ n \geq 0} \abs{ \frac{\mathbb{P}_z \left( \zeta_{n+1}^{\epsilon,\delta,\vec{L}^\epsilon}=0 \middle| \zeta_n^{\epsilon,\delta,\vec{L}^\epsilon} = 1 \right)}{p(\epsilon,\delta,\vec{L}^\epsilon)} - 1 }= 0.
	\end{equation}
	
	Let $K^{\epsilon,\delta,\vec{L}^\epsilon} = \min\{ n: \zeta_n^{\epsilon,\delta,\vec{L}^\epsilon} =0 \}$. Then for each $t \geq 0$,
	\begin{equation}\label{exp conv 1}
		\lim_{\delta \to 0} \lim_{\epsilon \to 0} \sup_{z \in \mathcal{S}_{\epsilon}} \abs{ \mathbb{P}_z (p(\epsilon,\delta,\vec{L}^\epsilon)K^{\epsilon,\delta,\vec{L}^\epsilon} \geq t) - e^{-t} } =0,
	\end{equation}
	and, moreover,
	\begin{equation}\label{exp conv 2}
		\lim_{\delta \to 0} \lim_{\epsilon \to 0} \sup_{z \in \mathcal{S}_{\epsilon}} \abs{ \mathbb{P}_z \left( \left( e(\epsilon,\delta) \right)^{-1} p(\epsilon,\delta,\vec{L}^\epsilon) \sum_{n=0}^\infty \xi_{k,n+1}^{\epsilon,\delta,\vec{L}^\epsilon} \zeta_n^{\epsilon,\delta,\vec{L}^\epsilon}  \geq t \right) - e^{-t}} = 0.
	\end{equation}
\end{Theorem}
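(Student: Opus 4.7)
The plan is to establish \eqref{exp conv 1} first and then reduce \eqref{exp conv 2} to it by showing that the sum $\sum_{n=0}^\infty \xi_{k,n+1}^{\epsilon,\delta,\vec{L}^\epsilon}\zeta_n^{\epsilon,\delta,\vec{L}^\epsilon}$ concentrates around $e(\epsilon,\delta)\,K^{\epsilon,\delta,\vec{L}^\epsilon}$. Throughout, the Markov structure of $\hat{X}_n^{\epsilon,\delta,\vec{L}^\epsilon}$ and the uniformity built into the hypotheses are used at every step.

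For \eqref{exp conv 1}, the strong Markov property applied iteratively gives
\begin{equation*}
\mathbb{P}_z\bigl(K^{\epsilon,\delta,\vec{L}^\epsilon}>n\bigr) = \mathbb{E}_z\!\Bigl[\prod_{m=0}^{n-1}\mathbb{P}_{\hat{X}_m^{\epsilon,\delta,\vec{L}^\epsilon}}\!\bigl(\zeta_1^{\epsilon,\delta,\vec{L}^\epsilon}=1\,\big|\,\zeta_0^{\epsilon,\delta,\vec{L}^\epsilon}=1\bigr)\Bigr].
\end{equation*}
Hypothesis \eqref{trans prob} replaces each factor by $1-p(\epsilon,\delta,\vec{L}^\epsilon)\bigl(1+o_{\epsilon,\delta}(1)\bigr)$, uniformly in the current state and in $m$. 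Taking $n = \lfloor t/p(\epsilon,\delta,\vec{L}^\epsilon)\rfloor$ and invoking $(1-p)^{1/p}\to e^{-1}$ yields the exponential limit $e^{-t}$, uniformly in $z\in\mathcal{S}_\epsilon$.

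For \eqref{exp conv 2}, set $S_n \coloneqq \sum_{m=0}^{n-1}\xi_{k,m+1}^{\epsilon,\delta,\vec{L}^\epsilon}\zeta_m^{\epsilon,\delta,\vec{L}^\epsilon}$. A Wald-type moment computation combining \eqref{fir moment} for the conditional mean with \eqref{sec moment} for the conditional second moment, together with the bound $\mathbb{E}_z K^{\epsilon,\delta,\vec{L}^\epsilon}\asymp 1/p(\epsilon,\delta,\vec{L}^\epsilon)$ extracted from \eqref{exp conv 1}, yields
\begin{equation*}
\mathbb{E}_z\!\Bigl(\tfrac{p(\epsilon,\delta,\vec{L}^\epsilon)}{e(\epsilon,\delta)}\,S_{K^{\epsilon,\delta,\vec{L}^\epsilon}} - p(\epsilon,\delta,\vec{L}^\epsilon)\, K^{\epsilon,\delta,\vec{L}^\epsilon}\Bigr)^{\!2} \lesssim p(\epsilon,\delta,\vec{L}^\epsilon) \longrightarrow 0,
\end{equation*}
so the difference converges to $0$ in $L^2$ uniformly in $z$. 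Combining this with \eqref{exp conv 1} via Slutsky's theorem delivers \eqref{exp conv 2}.

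The principal obstacle I anticipate is maintaining \emph{uniformity in $z\in\mathcal{S}_\epsilon$ and in the step index $n$} throughout: the above manipulations are iterated over a horizon of order $1/p(\epsilon,\delta,\vec{L}^\epsilon)\to\infty$, and each strong-Markov restart lands the chain at an a priori arbitrary point of $\mathcal{S}_\epsilon$. Only the uniform-in-$z$ and uniform-in-$n$ formulation of \eqref{fir moment}--\eqref{trans prob} ensures that the per-step errors accumulate into a single $o_{\epsilon,\delta}(1)$ correction, and careful bookkeeping—mirroring the structure of \cite[Lemma 6.8]{SMP}—is needed to verify that the accumulated error still vanishes in the prescribed order $\lim_{\delta\to 0}\lim_{\epsilon\to 0}$, since the single-step estimates themselves degrade as $\delta\to 0$.
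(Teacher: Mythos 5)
The paper does not write out a proof of this theorem; it simply observes that the statement ``closely parallels \cite[Lemma 6.8]{SMP}'' and defers to that reference. Your sketch reconstructs what is almost certainly the argument underlying \cite[Lemma 6.8]{SMP}: a geometric-tail estimate for $K^{\epsilon,\delta,\vec{L}^\epsilon}$ yields \eqref{exp conv 1}, and a martingale/Wald concentration bound shows that the time sum tracks $e(\epsilon,\delta)K^{\epsilon,\delta,\vec{L}^\epsilon}$, so \eqref{exp conv 2} follows by Slutsky. The structure is sound, and your emphasis on the uniformity in $z$ and $n$ being the load-bearing feature of \eqref{fir moment}--\eqref{trans prob} is exactly the right point.

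Two small repairs are needed if this were to be written out in full. First, the displayed identity $\mathbb{P}_z(K^{\epsilon,\delta,\vec{L}^\epsilon}>n)=\mathbb{E}_z\bigl[\prod_{m=0}^{n-1}\mathbb{P}_{\hat{X}_m}(\zeta_1=1\mid\zeta_0=1)\bigr]$ is not an equality: the factor for $m=0$ is deterministic and equal to $q(z)$, so for $n\geq 2$ the right-hand side carries a spurious multiplicative $q(z)$. The correct route is the one-step recursion $\mathbb{P}_z(K^{\epsilon,\delta,\vec{L}^\epsilon}>n)=\mathbb{E}_z\bigl[\mathbbm{1}_{\{K^{\epsilon,\delta,\vec{L}^\epsilon}>n-1\}}\,\mathbb{P}(\zeta_n=1\mid\mathcal{F}_{n-1})\bigr]$, after which the uniform hypothesis \eqref{trans prob} sandwiches $\mathbb{P}_z(K^{\epsilon,\delta,\vec{L}^\epsilon}>n)$ between $(1-p(1\pm\bar{\gamma}))^n$ and the same conclusion follows. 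Second, in the $L^2$ estimate for \eqref{exp conv 2}, decomposing $S_{K}-e(\epsilon,\delta)K$ into a martingale term and a drift term, the martingale part gives $O(p(\epsilon,\delta,\vec{L}^\epsilon))$ as you say, but the drift term contributed by the $o_{\epsilon,\delta}(1)$ error in \eqref{fir moment} gives an additional $O(\bar{\gamma}^2)$ where $\bar{\gamma}$ is the uniform error in the first moment; this also vanishes in the iterated limit $\lim_{\delta\to 0}\lim_{\epsilon\to 0}$, but the bound is not simply $\lesssim p(\epsilon,\delta,\vec{L}^\epsilon)$. Neither point affects the conclusion.
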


We will verify equations \eqref{fir moment}, \eqref{sec moment}, and \eqref{trans prob} with $e(\epsilon,\delta')$ defined by \eqref{value e}, and
\begin{equation}
p(\epsilon,\delta,\vec{L}^\epsilon) =  \frac{\sum_{k:I_k \sim O} \lambda_k^{d-1}/L_k}{\sum_{l:I_l \sim O} \lambda_l^{d-1}} \delta.
\end{equation}
Once this is established, note that
\begin{equation}
	\frac{e(\epsilon,\delta)}{p(\epsilon,\delta,\vec{L}^\epsilon)}
	=\alpha(\epsilon) \delta \left( \sum_{k:I_k \sim O} \frac{\lambda_k^{d-1}/L_k}{\sum_{l:I_l \sim O} \lambda_l^{d-1}} \delta \right)^{-1}
	=\alpha(\epsilon) \frac{\sum_{l:I_l \sim O} \lambda_l^{d-1}}{\sum_{k:I_k \sim O} \lambda_k^{d-1}/L_k},
\end{equation}
and thus \eqref{exp est 2} follows directly from Theorem \ref{thm abs}.

\begin{Lemma}\label{lem trans prob}
We have
    \begin{equation*}
        \lim_{\delta \to 0} \lim_{\epsilon \to 0}  \sup_{z \in \mathcal{S}_{\epsilon}} \sup_{ n \geq 0} \abs{ \frac{\mathbb{P}_z \left( \zeta_{n+1}^{\epsilon,\delta,\vec{L}^\epsilon}=0 \middle| \zeta_n^{\epsilon,\delta,\vec{L}^\epsilon} = 1 \right)}{p(\epsilon,\delta,\vec{L}^\epsilon)} - 1 }= 0.
    \end{equation*}
\end{Lemma}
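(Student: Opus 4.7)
The plan is to reduce the conditional transition probability to a one-cycle exit probability and then invoke Remark \ref{exit 1 cy} directly. By construction of the chain $\hat{X}_n^{\epsilon,\delta,\vec{L}^\epsilon}$, the event $\{\zeta_n^{\epsilon,\delta,\vec{L}^\epsilon} = 1\}$ is exactly the event $\{\hat{X}_n^{\epsilon,\delta,\vec{L}^\epsilon} \in C_\epsilon(r(\epsilon)+3\epsilon)\}$, which in turn means that the process has not yet hit the outer set $C_\epsilon(\vec{L}^\epsilon)$ by time $\sigma_n^{\epsilon,\delta}(\vec{L}^\epsilon)$. Conversely, $\{\zeta_{n+1}^{\epsilon,\delta,\vec{L}^\epsilon} = 0\}$ is the event that, starting from $\hat{X}_n^{\epsilon,\delta,\vec{L}^\epsilon}$, one more cycle (from $C_\epsilon(r(\epsilon)+3\epsilon)$ to $C_\epsilon(\delta)$ back to $C_\epsilon(\vec{L}^\epsilon) \cup C_\epsilon(r(\epsilon)+3\epsilon)$) terminates in the outer set.

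First I would use the strong Markov property at the stopping time $\sigma_n^{\epsilon,\delta}(\vec{L}^\epsilon) \wedge \sigma^\epsilon(\vec{L}^\epsilon)$. On the event $\{\zeta_n^{\epsilon,\delta,\vec{L}^\epsilon}=1\}$, the chain has not yet been absorbed, so $\sigma_n^{\epsilon,\delta}(\vec{L}^\epsilon) < \sigma^\epsilon(\vec{L}^\epsilon)$ and the state $\hat{X}_n^{\epsilon,\delta,\vec{L}^\epsilon} = Z^\epsilon(\sigma_n^{\epsilon,\delta}(\vec{L}^\epsilon))$ lies in $\mathcal{S}_\epsilon = C_\epsilon(r(\epsilon)+3\epsilon)$. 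Thus
\begin{equation*}
\mathbb{P}_z\!\left(\zeta_{n+1}^{\epsilon,\delta,\vec{L}^\epsilon}=0 \,\middle|\, \mathcal{F}_n^{\epsilon,\delta,\vec{L}^\epsilon},\, \zeta_n^{\epsilon,\delta,\vec{L}^\epsilon}=1\right)
= \mathbb{P}_{\hat{X}_n^{\epsilon,\delta,\vec{L}^\epsilon}}\!\left(Z^\epsilon(\sigma_{1}^{\epsilon,\delta}(\vec{L}^\epsilon)) \in C_\epsilon(\vec{L}^\epsilon)\right),
\end{equation*}
and the right-hand side depends only on the starting point $\hat{X}_n^{\epsilon,\delta,\vec{L}^\epsilon} \in \mathcal{S}_\epsilon$.

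Second, I would invoke Remark \ref{exit 1 cy}, which gives
\begin{equation*}
\mathbb{P}_{z'}\!\left(Z^\epsilon(\sigma_{1}^{\epsilon,\delta}(\vec{L}^\epsilon)) \in C_\epsilon(\vec{L}^\epsilon)\right)
\sim_{\epsilon,\delta} \frac{\sum_{k:I_k \sim O} \lambda_k^{d-1}/L_k}{\sum_{l:I_l \sim O} \lambda_l^{d-1}}\,\delta = p(\epsilon,\delta,\vec{L}^\epsilon),
\end{equation*}
uniformly in $z' \in C_\epsilon(r(\epsilon)+3\epsilon)$. Taking suprema and then conditional expectation (since the bound is uniform in the starting point $z'$, it survives conditioning on $\mathcal{F}_n^{\epsilon,\delta,\vec{L}^\epsilon}$) yields the claimed double limit, uniformly in $z \in \mathcal{S}_\epsilon$ and in $n \geq 0$.

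I do not anticipate a genuine obstacle here: both the Markov reduction and the one-cycle estimate are already available, and the statement is essentially a repackaging. The only minor care required is to make sure the uniformity in Remark \ref{exit 1 cy} (over starting points in $C_\epsilon(r(\epsilon)+3\epsilon)$) translates into uniformity in $n$ after conditioning — this is immediate because $\hat{X}_n^{\epsilon,\delta,\vec{L}^\epsilon}$ is $\mathcal{F}_n^{\epsilon,\delta,\vec{L}^\epsilon}$-measurable and the bound does not depend on which point of $C_\epsilon(r(\epsilon)+3\epsilon)$ it equals.
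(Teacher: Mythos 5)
Your proof is correct and follows essentially the same route as the paper's: both reduce the conditional transition probability, via the strong Markov property at $\sigma_n^{\epsilon,\delta}(\vec{L}^\epsilon)$, to the one-cycle escape probability starting from $C_\epsilon(r(\epsilon)+3\epsilon)$, and then apply Remark \ref{exit 1 cy} uniformly in the starting point. The only cosmetic difference is that you condition on the full filtration $\mathcal{F}_n^{\epsilon,\delta,\vec{L}^\epsilon}$ while the paper factors the joint probability $\mathbb{P}_z(\zeta_{n+1}=0,\zeta_n=1)$ and divides by $\mathbb{P}_z(\zeta_n=1)$ at the end; these are equivalent.
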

\begin{proof}
By the strong Markov property, 
\begin{equation*}
	\mathbb{P}_z \left( \zeta_{n+1}^{\epsilon,\delta,\vec{L}^\epsilon}=0, \zeta_n^{\epsilon,\delta,\vec{L}^\epsilon} = 1 \right)
	= \mathbb{E}_z \left[ \mathbbm{1}_{ \{ \zeta_n^{\epsilon,\delta,\vec{L}^\epsilon} = 1 \} } \mathbb{P}_{Z^\epsilon(\sigma_n^{\epsilon,\delta}(\vec{L}^\epsilon))}  \left( Z^\epsilon(\sigma_1^{\epsilon}(\vec{L}^\epsilon)) \in C_\epsilon(\vec{L}^\epsilon)    \right) \right].
\end{equation*}
Under the event $\{ \zeta_n^{\epsilon,\delta,\vec{L}^\epsilon} = 1 \}$, $ Z^\epsilon(\sigma_n^{\epsilon,\delta}(\vec{L}^\epsilon)) \in C_\epsilon(r(\epsilon)+3\epsilon)$ and thus due to Remark \ref{exit 1 cy}, we have
\begin{equation*}
\mathbb{P}_{Z^\epsilon(\sigma_n^{\epsilon,\delta}(\vec{L}^\epsilon))}  \left( Z^\epsilon(\sigma_1^{\epsilon}(\vec{L}^\epsilon)) \in C_\epsilon(\vec{L}^\epsilon)    \right)
\sim_{\epsilon,\delta} p(\epsilon,\delta,\vec{L}^\epsilon).
\end{equation*}
	Dividing both sides by $\mathbb{P}_z \left(  \zeta_n^{\epsilon,\delta,\vec{L}^\epsilon} = 1 \right)$, we then have the result.
\end{proof}

\begin{Lemma}\label{lem fir moment}
    We have
    \begin{equation*}
        \lim_{\delta \to 0} \lim_{\epsilon \to 0}  \sup_{z \in \mathcal{S}_{\epsilon}} \sup_{ n \geq 0} \abs{ \frac{\mathbb{E}_z \left( \xi_{k,n+1}^{\epsilon,\delta,\vec{L}^\epsilon} \middle| \mathcal{F}_n^{\epsilon,\delta,\vec{L}^\epsilon} \right)}{e(\epsilon,\delta)} - 1} = 0.
    \end{equation*}
\end{Lemma}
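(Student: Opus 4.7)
The plan is to follow the cycle-decomposition strategy already used in the proof of Lemma \ref{small exit est 2 new}. Since $\xi_{k,n+1}^{\epsilon,\delta,\vec{L}^\epsilon}\zeta_n^{\epsilon,\delta,\vec{L}^\epsilon}$ vanishes whenever $\hat X_n^{\epsilon,\delta,\vec{L}^\epsilon}\in C_\epsilon(\vec{L}^\epsilon)$, the estimate needs to be verified only on $\{\zeta_n^{\epsilon,\delta,\vec{L}^\epsilon}=1\}=\{\hat X_n^{\epsilon,\delta,\vec{L}^\epsilon}\in C_\epsilon(r(\epsilon)+3\epsilon)\}$. All the bounds below will be uniform in $z\in\mathcal{S}_\epsilon$ and in $n\geq 0$, the latter because, after one application of the strong Markov property, the required estimate depends on $n$ only through the starting point $\hat X_n^{\epsilon,\delta,\vec{L}^\epsilon}$.

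First I would apply the strong Markov property at $\sigma_n^{\epsilon,\delta}(\vec{L}^\epsilon)$: on $\{\zeta_n^{\epsilon,\delta,\vec{L}^\epsilon}=1\}$ one has $\sigma_n^{\epsilon,\delta}(\vec{L}^\epsilon)<\sigma^\epsilon(\vec{L}^\epsilon)$, so
\begin{equation*}
\mathbb{E}_z\bigl(\xi_{k,n+1}^{\epsilon,\delta,\vec{L}^\epsilon}\bigm|\mathcal{F}_n^{\epsilon,\delta,\vec{L}^\epsilon}\bigr)=\frac{\mathbb{E}_{\hat X_n^{\epsilon,\delta,\vec{L}^\epsilon}}\!\bigl[\sigma_1^{\epsilon,\delta}(\vec{L}^\epsilon)\,\mathbbm{1}_{\{Z^\epsilon(\sigma^\epsilon(\vec{L}^\epsilon))\in C_\epsilon^k(L_k^\epsilon)\}}\bigr]}{\mathbb{P}_z\bigl(Z^\epsilon(\sigma^\epsilon(\vec{L}^\epsilon))\in C_\epsilon^k(L_k^\epsilon)\bigr)}.
\end{equation*}
Lemma \ref{exit place est new} makes the denominator $\sim p_k(\vec{L})$ uniformly in $z\in\mathcal{S}_\epsilon$, so it remains to show that the numerator is $\sim p_k(\vec{L})\,e(\epsilon,\delta)$ uniformly for the starting point $\hat X_n^{\epsilon,\delta,\vec{L}^\epsilon}\in C_\epsilon(r(\epsilon)+3\epsilon)$.

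Next I would decompose the numerator via the strong Markov property at $\sigma_1^{\epsilon,\delta}(\vec{L}^\epsilon)$, splitting according to whether the first cycle ends in $C_\epsilon(\vec{L}^\epsilon)$ or in $C_\epsilon(r(\epsilon)+3\epsilon)$. In the second case, Lemma \ref{exit place est new} lets me replace the conditional probability of ending in $C_\epsilon^k(L_k^\epsilon)$ by $p_k(\vec{L})$, and Lemma \ref{exit place est} yields $\mathbb{E}_{z'}\sigma_1^{\epsilon,\delta}(\vec{L}^\epsilon)\sim \alpha(\epsilon)\delta = e(\epsilon,\delta)$; together these already produce the claimed $p_k(\vec{L})\,e(\epsilon,\delta)$ asymptotic. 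What remains is to bound the "correlation term" $\mathbb{E}_{z'}[\sigma_1^{\epsilon,\delta}(\vec{L}^\epsilon)\,\mathbbm{1}_{\{Z^\epsilon(\sigma_1^{\epsilon,\delta}(\vec{L}^\epsilon))\in C_\epsilon(\vec{L}^\epsilon)\}}]$, which dominates both the first summand in the split (trivially) and the error in the second summand.

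The hard part will be this correlation estimate, i.e., ruling out the possibility that cycles terminating at the far boundary $C_\epsilon(\vec{L}^\epsilon)$ are atypically long. My plan is to split $\sigma_1^{\epsilon,\delta}(\vec{L}^\epsilon)=\tau_1^{\epsilon,\delta}(\vec{L}^\epsilon)+(\sigma_1^{\epsilon,\delta}(\vec{L}^\epsilon)-\tau_1^{\epsilon,\delta}(\vec{L}^\epsilon))$ and apply the strong Markov property at $\tau_1^{\epsilon,\delta}(\vec{L}^\epsilon)$. Starting from $Z^\epsilon(\tau_1^{\epsilon,\delta}(\vec{L}^\epsilon))\in C_\epsilon(\delta)$, the process is confined to the tube $\Gamma_k^\epsilon$ until it reaches $C_\epsilon(r(\epsilon)+3\epsilon)$ or $C_\epsilon^k(L_k^\epsilon)$, so by a standard 1D reflected-Brownian-motion computation the probability of hitting $C_\epsilon(\vec{L}^\epsilon)$ first is $(\delta-r(\epsilon)-3\epsilon)/(L_k^\epsilon-r(\epsilon)-3\epsilon)=O(\delta)$, uniformly in the starting point. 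Combined with $\mathbb{E}_{z'}\tau_1^{\epsilon,\delta}(\vec{L}^\epsilon)\sim\alpha(\epsilon)\delta$ (Lemma \ref{exit place est}), the $\tau_1^{\epsilon,\delta}(\vec{L}^\epsilon)$-piece is $O(\alpha(\epsilon)\delta^2)=o_{\epsilon,\delta}(\alpha(\epsilon)\delta)$; the residual $\sigma_1^{\epsilon,\delta}(\vec{L}^\epsilon)-\tau_1^{\epsilon,\delta}(\vec{L}^\epsilon)$ has expectation $O(1)$ by the standard 1D exit-time formula, which is $o_{\epsilon,\delta}(e(\epsilon,\delta))$ since $\alpha(\epsilon)\delta\to\infty$. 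Combining these with Lemma \ref{exit place est new} for the denominator yields the desired ratio asymptotic uniformly in $z\in\mathcal{S}_\epsilon$ and $n\geq 0$.
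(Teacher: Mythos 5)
Your proof is correct and it follows the same cycle-decomposition strategy as the paper, but with the strong Markov property placed at a different stopping time, which changes which auxiliary estimate carries the main load. The paper splits the cycle duration by \emph{time}: $\sigma_1^{\epsilon,\delta} = (\tau_1^{\epsilon,\delta}-\sigma_0^{\epsilon,\delta})+(\sigma_1^{\epsilon,\delta}-\tau_1^{\epsilon,\delta})$, discards the travel piece as $O(\delta)$, and for the exploration piece applies the strong Markov property at $\tau_1^{\epsilon,\delta}$, which forces it to extend Lemma~\ref{exit place est new} to initial points in $C_\epsilon(\delta)$ (an extension it proves inline). You instead split by the \emph{event} $\{Z^\epsilon(\sigma_1^{\epsilon,\delta}(\vec{L}^\epsilon))\in C_\epsilon(r(\epsilon)+3\epsilon)\}$ versus $\{Z^\epsilon(\sigma_1^{\epsilon,\delta}(\vec{L}^\epsilon))\in C_\epsilon(\vec{L}^\epsilon)\}$ and apply the strong Markov property at $\sigma_1^{\epsilon,\delta}$; on the first event the starting point for the post-cycle exit is already in $C_\epsilon(r(\epsilon)+3\epsilon)$, so Lemma~\ref{exit place est new} applies verbatim. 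What you pay for this is the explicit correlation bound, but you handle it cleanly: $\mathbb{E}_{z'}\tau_1^{\epsilon,\delta}\cdot O(\delta)=O(\alpha(\epsilon)\delta^2)$ via the one-dimensional exit probability (the same quantity underlying Remark~\ref{exit 1 cy}) plus an $O(1)$ travel-time remainder, both $o_{\epsilon,\delta}(e(\epsilon,\delta))$. Your opening reduction via the strong Markov property at $\sigma_n^{\epsilon,\delta}$ and the restriction to $\hat X_n^{\epsilon,\delta,\vec{L}^\epsilon}\in C_\epsilon(r(\epsilon)+3\epsilon)$, as well as the final normalization by $\mathbb{P}_{z}(Z^\epsilon(\sigma^\epsilon(\vec{L}^\epsilon))\in C_\epsilon^k(L_k^\epsilon))\sim p_k(\vec{L})$, coincide with the paper's. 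Both routes reach the same asymptotic; yours trades the extension of Lemma~\ref{exit place est new} for a slightly more elaborate correlation estimate, which is a reasonable and arguably more self-contained alternative.
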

\begin{proof}
By the strong Markov property,
\begin{equation*}
	\mathbb{E}_z \left( \xi_{k,n+1}^{\epsilon,\delta,\vec{L}^\epsilon} \middle| \mathcal{F}_n^{\epsilon,\delta,\vec{L}^\epsilon} \right)
	= \mathbb{E}_{\hat{X}_{n}^{\epsilon, \delta,\vec{L}^\epsilon}}  \xi_{k,1}^{\epsilon,\delta,\vec{L}^\epsilon}.
\end{equation*}
Thanks to the definition of $\xi_{k,n}^{\epsilon,\delta,\vec{L}^\epsilon}$, it suffices to consider $\hat{X}_{n}^{\epsilon, \delta',\vec{L}^\epsilon} \in C_\epsilon(r(\epsilon)+3\epsilon) $.
For $z' \in C_\epsilon(r(\epsilon)+3\epsilon)$, by the strong Markov property, we have
\begin{equation*}
\begin{aligned}
	\mathbb{E}_{z'}  \xi_{k,1}^{\epsilon,\delta,\vec{L}^\epsilon} 
	&= \mathbb{E}_{z'} \left[ \left( \sigma_{1}^{\epsilon,\delta}(\vec{L}^\epsilon)-  \tau_1^{\epsilon,\delta}(\vec{L}^\epsilon) \right) \mathbbm{1}_{\{ Z^\epsilon(\sigma^\epsilon(\vec{L}^\epsilon)) \in C_{\epsilon}^k(L_k^\epsilon)\}} \right] \left( \mathbb{P}_{z'}(Z^\epsilon(\sigma^\epsilon(\vec{L}^\epsilon)) \in C_{\epsilon}^k(L_k^\epsilon)) \right)^{-1} \\
	 & \ \ \ + \mathbb{E}_{z'} \left[ \left( \tau_{1}^{\epsilon,\delta}(\vec{L}^\epsilon)-  \sigma_0^{\epsilon,\delta}(\vec{L}^\epsilon) \right) \mathbbm{1}_{\{ Z^\epsilon(\sigma^\epsilon(\vec{L}^\epsilon)) \in C_{\epsilon}^k(L_k^\epsilon)\}} \right]
	\left( \mathbb{P}_{z'}(Z^\epsilon(\sigma^\epsilon(\vec{L}^\epsilon)) \in C_{\epsilon}^k(L_k^\epsilon)) \right)^{-1}.
\end{aligned}
\end{equation*}
For the first term, it is not hard to show
\begin{equation*}
\begin{aligned}
	& \sup_{z' \in C_\epsilon(r(\epsilon)+3\epsilon)} \mathbb{E}_{z'} \left[ \left( \sigma_{1}^{\epsilon,\delta}(\vec{L}^\epsilon)-  \tau_1^{\epsilon,\delta}(\vec{L}^\epsilon) \right) \mathbbm{1}_{\{ Z^\epsilon(\sigma^\epsilon(\vec{L}^\epsilon)) \in C_{\epsilon}^k(L_k^\epsilon)\}} \right] \left( \mathbb{P}_{z'}(Z^\epsilon(\sigma^\epsilon(\vec{L}^\epsilon)) \in C_{\epsilon}^k(L_k^\epsilon)) \right)^{-1} \\
	& \leq c \delta = o(e(\epsilon,\delta)).
\end{aligned}
\end{equation*}
For the second term, by the argument in Lemma \ref{small exit est 2 new}, we have
\begin{equation}\label{exp time est 1}
\begin{aligned}
	&\mathbb{E}_{z'} \left[ \left( \tau_{1}^{\epsilon,\delta}(\vec{L}^\epsilon)-  \sigma_0^{\epsilon,\delta}(\vec{L}^\epsilon) \right) \mathbbm{1}_{\{ Z^\epsilon(\sigma^\epsilon(\vec{L}^\epsilon)) \in C_{\epsilon}^k(L_k^\epsilon)\}} \right]
	\left( \mathbb{P}_{z'}(Z^\epsilon(\sigma^\epsilon(\vec{L}^\epsilon)) \in C_{\epsilon}^k(L_k^\epsilon)) \right)^{-1} \\
	&\sim_{\epsilon,\delta} \mathbb{E}_{z'} \left[ \tau_{1}^{\epsilon,\delta}(\vec{L}^\epsilon)-  \sigma_0^{\epsilon,\delta}(\vec{L}^\epsilon) \right] 
	\sim_{\epsilon,\delta} e(\epsilon, \delta)
\end{aligned}
\end{equation}
uniformly in $ z' \in C_\epsilon( r(\epsilon)+3\epsilon)$.
To be more precise,
\begin{equation}\label{exp time est 2}
\begin{aligned}
	&\mathbb{E}_{z'} \left[ \left( \tau_{1}^{\epsilon,\delta}(\vec{L}^\epsilon)-  \sigma_0^{\epsilon,\delta}(\vec{L}^\epsilon) \right) \mathbbm{1}_{\{ Z^\epsilon(\sigma^\epsilon(\vec{L}^\epsilon)) \in C_{\epsilon}^k(L_k^\epsilon)\}} \right] \\
	& = \mathbb{E}_{z'} \left[ \left( \tau_{1}^{\epsilon,\delta}(\vec{L}^\epsilon)-  \sigma_0^{\epsilon,\delta}(\vec{L}^\epsilon) \right) 
	\mathbb{P}_{\tau_1^{\epsilon,\delta}(\vec{L}^\epsilon)} \left( Z^\epsilon(\sigma^\epsilon(\vec{L}^\epsilon)) \in C_{\epsilon}^k(L_k^\epsilon) \right) \right] \\
	& \sim_{\epsilon,\delta} \mathbb{E}_{z'} \left( \tau_{1}^{\epsilon,\delta}(\vec{L}^\epsilon)-  \sigma_0^{\epsilon,\delta}(\vec{L}^\epsilon) \right) p_{k}(\vec{L})
\end{aligned}
\end{equation}
uniformly in $ z' \in C_\epsilon( r(\epsilon)+3\epsilon)$.
Here, we use the fact that Lemma \ref{exit place est new} can be extended to initial point $C_\epsilon(\delta)$.
In fact,
\begin{equation}
\begin{aligned}
	&\mathbb{P}_{z} \left( Z^\epsilon(\sigma^\epsilon(\vec{L}^\epsilon)) \in C_{\epsilon}^k(L_k^\epsilon) \right) \\
	&= \mathbb{P}_{z} \left( Z^\epsilon(\sigma^\epsilon(\vec{L}^\epsilon)) \in C_{\epsilon}^k(L_k^\epsilon), Z^\epsilon(\sigma_0^{\epsilon,\delta}(\vec{L}^\epsilon)) \in C_\epsilon(r(\epsilon)+3\epsilon) \right) \\
	& \ \ \ \ + \mathbb{P}_{z} \left( Z^\epsilon(\sigma^\epsilon(\vec{L}^\epsilon)) \in C_{\epsilon}^k(L_k^\epsilon), Z^\epsilon(\sigma_0^{\epsilon,\delta}(\vec{L}^\epsilon)) \in C_\epsilon(\vec{L}^\epsilon) \right) \\
	& = \mathbb{P}_{z'} \left( Z^\epsilon(\sigma^\epsilon(\vec{L}^\epsilon)) \in C_{\epsilon}^k(L_k^\epsilon) \right) + o_{\epsilon,\delta}(1)
	\sim_{\epsilon,\delta} p_{k}(\vec{L}),
\end{aligned}
\end{equation}
uniformly in $z \in C_\epsilon(\delta)$, $ z' \in C_\epsilon( r(\epsilon)+3\epsilon)$.
Dividing \eqref{exp time est 2} by $\mathbb{P}_{z'}(Z^\epsilon(\sigma^\epsilon(\vec{L}^\epsilon)) \in C_{\epsilon}^k(L_k^\epsilon))$ and using Lemma \ref{exit place est new} again, we obtain \eqref{exp time est 1}.
\end{proof}

\begin{Lemma}\label{lem sec moment}
    We have
    \begin{equation*}
       \lim_{\delta \to 0} \lim_{\epsilon \to 0}  \sup_{z \in \mathcal{S}_{\epsilon}} \sup_{ n \geq 0} \mathbb{E}_z \left( \left( \xi_{k,n+1}^{\epsilon,\delta,\vec{L}^\epsilon} \right)^2 \middle| \mathcal{F}_{n}^{\epsilon,\delta,\vec{L}^\epsilon} \right) (e(\epsilon,\delta))^{-2} < \infty.
    \end{equation*}
\end{Lemma}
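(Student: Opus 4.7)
The plan is to reduce to a uniform second-moment bound on one cycle length $\sigma_1^{\epsilon,\delta}(\vec{L}^\epsilon)$. By Lemma \ref{exit place est new}, the denominator $\mathbb{P}_{\hat{X}_0^{\epsilon,\delta,\vec{L}^\epsilon}}(Z^\epsilon(\sigma^\epsilon(\vec{L}^\epsilon)) \in C_\epsilon^k(L_k^\epsilon))$ is bounded below by $\tfrac{1}{2}p_k(\vec{L})$ uniformly for $\epsilon,\delta$ small and $\hat{X}_0 = z \in \mathcal{S}_\epsilon$. Dropping the exit-place indicator (which is at most $1$) and applying the strong Markov property at $\sigma_n^{\epsilon,\delta}(\vec{L}^\epsilon) \wedge \sigma^\epsilon(\vec{L}^\epsilon)$, the event $\{\hat{X}_n \in C_\epsilon(\vec{L}^\epsilon)\}$ contributes at most $e(\epsilon,\delta)^2$, while on $\{\hat{X}_n \in C_\epsilon(r(\epsilon)+3\epsilon)\}$ the contribution is bounded by $\mathbb{E}_{\hat{X}_n}[(\sigma_1^{\epsilon,\delta}(\vec{L}^\epsilon))^2]$. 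Hence it suffices to prove
\[
\sup_{z' \in C_\epsilon(r(\epsilon)+3\epsilon)}\mathbb{E}_{z'}\!\left[(\sigma_1^{\epsilon,\delta}(\vec{L}^\epsilon))^2\right] \lesssim e(\epsilon,\delta)^2.
\]

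For the first-moment step, I would decompose $\sigma_1^{\epsilon,\delta}(\vec{L}^\epsilon) = \tau_1^{\epsilon,\delta}(\vec{L}^\epsilon) + (\sigma_1^{\epsilon,\delta}(\vec{L}^\epsilon) - \tau_1^{\epsilon,\delta}(\vec{L}^\epsilon))$. Lemma \ref{exit place est} gives $\mathbb{E}_{z'}\tau_1^{\epsilon,\delta}(\vec{L}^\epsilon) = \mathbb{E}_{z'}\sigma^\epsilon(\delta) \sim \alpha(\epsilon)\delta = e(\epsilon,\delta)$, while the second piece is dominated by a one-dimensional Brownian exit time from a tube segment of length $O(1)$ starting at distance $\delta$ from one endpoint, which is of order $\delta$ and hence $o(e(\epsilon,\delta))$ since $\alpha(\epsilon) \to \infty$. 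This yields a constant $M \lesssim e(\epsilon,\delta)$ with $\mathbb{E}_{z'}[\sigma_1^{\epsilon,\delta}(\vec{L}^\epsilon)] \leq M$ uniformly over every point $z'$ reachable during a cycle.

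To upgrade to the second moment, I would derive an exponential tail estimate via an iterated Markov inequality. Markov gives $\mathbb{P}_{z'}(\sigma_1^{\epsilon,\delta}(\vec{L}^\epsilon) > 2M) \leq 1/2$ uniformly, and the strong Markov property applied at time $2kM$ yields
\[
\mathbb{P}_{z'}(\sigma_1^{\epsilon,\delta}(\vec{L}^\epsilon) > 2(k+1)M) \leq \tfrac{1}{2}\,\mathbb{P}_{z'}(\sigma_1^{\epsilon,\delta}(\vec{L}^\epsilon) > 2kM),
\]
so $\mathbb{P}_{z'}(\sigma_1^{\epsilon,\delta}(\vec{L}^\epsilon) > 2kM) \leq 2^{-k}$. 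Integrating the identity $\mathbb{E}_{z'}[(\sigma_1^{\epsilon,\delta})^2] = 2\int_0^\infty t\,\mathbb{P}_{z'}(\sigma_1^{\epsilon,\delta} > t)\,dt$ over dyadic blocks of length $2M$ then gives the desired bound of order $M^2 \lesssim e(\epsilon,\delta)^2$. The main obstacle is ensuring that the first-moment bound $M$ holds uniformly over every intermediate point (not only over $C_\epsilon(r(\epsilon)+3\epsilon)$), so that the strong-Markov iteration closes; this uniformity follows from Lemma \ref{exit place est} combined with the one-dimensional Brownian comparison above, both producing first-moment bounds of order at most $e(\epsilon,\delta)$.
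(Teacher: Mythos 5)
Your reduction to a one-cycle second-moment bound
\[
\sup_{z' \in C_\epsilon(r(\epsilon)+3\epsilon)}\mathbb{E}_{z'}\bigl[(\sigma_1^{\epsilon,\delta}(\vec{L}^\epsilon))^2\bigr] \lesssim e(\epsilon,\delta)^2
\]
matches the paper's own reduction; the paper then simply invokes Lemma \ref{small exit est 2} together with Lemma 3.3 of \cite{Hsu25}, so it does not spell out how the second moment is obtained, whereas you propose an iterated-Markov (geometric tail) argument for it. This is a legitimate alternative route, and the opening reduction (dropping the indicator $\mathbbm{1}_{\{Z^\epsilon(\sigma^\epsilon(\vec{L}^\epsilon))\in C_\epsilon^k(L_k^\epsilon)\}}\le 1$, bounding the denominator below by $p_k(\vec L)/2$ via Lemma \ref{exit place est new}, and applying the strong Markov property at $\sigma_n^{\epsilon,\delta}$) is sound.

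The gap is exactly in the step you flag as the ``main obstacle,'' and the resolution you offer does not close it. For the renewal inequality $\mathbb{P}_{z'}(\sigma_1 > 2(k+1)M) \le \tfrac12 \mathbb{P}_{z'}(\sigma_1 > 2kM)$ you need, after the Markov property at the deterministic time $2kM$, a bound
\[
\sup_{z''}\mathbb{E}_{z''}[\text{remaining cycle time}] \le M \lesssim \alpha(\epsilon)\delta
\]
uniformly over every point $z''$ the process can occupy at time $2kM$ on the event $\{\sigma_1 > 2kM\}$. Two issues arise. First, $\sigma_1$ is a two-phase (sequential-hitting) stopping time, so the ``remaining time'' depends on whether $\tau_1^{\epsilon,\delta}(\vec{L}^\epsilon)$ has occurred by time $2kM$; you would need to track this state explicitly in the iteration. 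Second, and more substantively, in the pre-$\tau_1$ phase the point $z''$ can lie anywhere in $B_\epsilon(\delta)$, \emph{including inside the vertex ball} $\mathcal{E}^\epsilon$. Lemma \ref{exit place est} gives $\mathbb{E}_{z}\sigma^\epsilon(\delta) \sim \alpha(\epsilon)\delta$ only for $z \in C_\epsilon(r(\epsilon)+3\epsilon)$, and the one-dimensional Brownian comparison controls only the tube segment $\{r(\epsilon)+3\epsilon \le d_\Gamma(\Pi(\cdot),O) \le \delta\}$. Neither covers a starting point deep in the ball. What is actually needed is a narrow-escape estimate showing that the expected time to reach $C_\epsilon(r(\epsilon)+3\epsilon)$ from inside $\mathcal{E}^\epsilon$ is $o(\alpha(\epsilon)\delta)$ uniformly (in $d=3$ this quantity is of order $r(\epsilon)^d/\epsilon^{d-2} = \alpha(\epsilon)\epsilon$, and in $d=2$ of order $r(\epsilon)^2\log(r(\epsilon)/\epsilon)$, both indeed $\ll \alpha(\epsilon)\delta$ for fixed $\delta$ as $\epsilon\to 0$, but this is a separate estimate that must be supplied). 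Without it, Markov's inequality from a ball-interior point does not give a probability bounded away from $1$ at the time step $2M \asymp \alpha(\epsilon)\delta$, and the geometric tail does not follow. Note also that this $\delta$ factor is essential: a bound of order $\alpha(\epsilon)^2$ rather than $(\alpha(\epsilon)\delta)^2$ would leave a residual $1/\delta^2$ which diverges when you finally take $\delta\to 0$ in the statement of the lemma.

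So: right reduction, plausible alternative method for the second moment, but the uniform first-moment input to the iteration — specifically over the ball interior — is asserted rather than established and does not follow from the two lemmas you cite.
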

\begin{proof}
Similarly, it suffices to prove that for $\epsilon,\delta$ small enough
\begin{equation*}
	\sup_{z \in C_\epsilon(r(\epsilon)+3\epsilon)  } \mathbb{E}_{z} \left( \xi_{k,1}^{\epsilon,\delta,\vec{L}^\epsilon} \right)^2 \leq c (e(\epsilon,\delta))^2  .
\end{equation*}
	With Lemma \ref{small exit est 2}, this follows from the same argument as in \cite[Lemma 3.3]{Hsu25}.
\end{proof}

\begin{Remark}
    Combining Lemmas \ref{exit place est new}, \ref{small exit est 2 new}, and Theorem \ref{large exit time est}, one can check that the assumptions in \cite{SMP} are fulfilled.
    Consequently, the process exhibits the type of metastability described in \cite{SMP}. 
    However, our setting is more explicit, and thus we aim to obtain a more detailed description of the corresponding metastable distribution. 
    This will be the main objective of the remaining sections.
\end{Remark}

\section{Intermediate time scales}\label{sec dis}
Let $\abs{V'}$ be the numbers of distinct orders $\{r_j(\epsilon)\}_{j=1}^{\abs{V}}$ and $r_{(i)}(\epsilon)$ be the $i$-th smallest order radius for $i=1,\cdots,\abs{V'}$.
In other words, for every $j=1,\cdots,\abs{V}$, there exists $i \in \{1,\cdots,\abs{V'}\}$ such that
\begin{equation*}
	r_j(\epsilon) \asymp r_{(i)}(\epsilon)
\end{equation*}
and for every $i=1,\cdots,\abs{V'}$, there exists $j \in \{1,\cdots,\abs{V}\}$ such that
\begin{equation*}
	r_{(i)}(\epsilon) \asymp r_j(\epsilon).
\end{equation*}
Moreover,
\begin{equation*}
	r_{(1)}(\epsilon) \ll r_{(2)}(\epsilon) \ll \cdots \ll r_{(\abs{V'})}(\epsilon).
\end{equation*}
For all $i=1,\cdots,\abs{V'}$, define
\begin{equation}
	T_\epsilon^i \coloneqq \frac{r_{(i)}(\epsilon)^d}{\epsilon^{d-1}}.
\end{equation}

In this section, we study the asymptotic behavior of $Z^\epsilon$ at the time scale $t(\epsilon)$ such that $T_\epsilon^i \ll t(\epsilon) \ll T_\epsilon^{i+1}$ for some $i=1,\cdots, \abs{V'}-1$.

\subsection{Asymptotic discrete time Markov chain}\label{sec dis markov}

For each $i=1,\cdots, \abs{V'}-1$, let $X_n^i$ be a discrete time Markov chain with the state space $\mathcal{S} =  \bigcup_{j=1}^{\abs{V}} O_j$ and the transition probability 
\begin{equation}
	p^i(O_j,O_j) =
	\begin{dcases}
	1, &\text { if } r_{j}(\epsilon) \gg r_{(i)}(\epsilon), \\
	0, &\text{ otherwise}.
	\end{dcases}
	\end{equation}
	For $j \neq j'$ and $r_{j}(\epsilon) \lesssim r_{(i)}(\epsilon)$
	\begin{equation}
	p^i(O_{j},O_{j'}) = 
\begin{dcases}	
	p_{j,k}(\vec{L}), &\text{ if } I_k \sim O_{j},O_{j'}, \\
	0, &\text{ otherwise},
\end{dcases}
\end{equation}
where $p_{j,k}(\vec{L})$ is given by Lemma \ref{exit place est new} with $L_{j,k} = \abs{I_k}$.
It is well-known that for every initial point $O_j$, there exists an absorbing probability distribution $\mu^i(O_j,\cdot)$ on $\mathcal{S}$.

Let $\sigma^{\epsilon,i}$ be the first hitting time to $\bigcup_{j'} C_{\epsilon,j'}(r_{j'}(\epsilon)+3\epsilon)$, where the union is taken over all $j'$ such that $r_{j'}(\epsilon) \gg r_{(i)}(\epsilon)$.

In what follows, we consider 
\begin{equation}\label{L ep}
    L_{j,k}^\epsilon = \abs{I_k}-(r_{j'}(\epsilon)+3\epsilon), \ \ \ I_k \sim O_j, O_{j'}, \ \ \ \text{ and } \ \ \ \vec{L}_j^\epsilon = \bigcup_{k:I_k \sim O_j} \{ (k,L_{j,k}^\epsilon) \}.
\end{equation}

\begin{Lemma}\label{lem dis mar}
	For every $i=1,\cdots, \abs{V'}-1$ and every $j$ such that $r_j(\epsilon) \lesssim r_{(i)}(\epsilon)$, we have, for all $j'=1,\cdots\abs{V}$,
	\begin{equation}\label{dis exit pl}
		\lim_{\epsilon \to 0} \sup_{z \in C_{\epsilon,j}(r_j(\epsilon)+3\epsilon)} \abs{ \mathbb{P}_{z}\left( Z^\epsilon(\sigma^{\epsilon,i}) \in C_{\epsilon,j'}(\rho_{j'}(\epsilon)+3\epsilon) \right) - \mu^i(O_j,O_{j'})} = 0,
	\end{equation}
	and for all $t(\epsilon) > 0$ satisfying $t(\epsilon) \gg T_{\epsilon}^i $,
	\begin{equation}\label{dis exit time lower}
		\lim_{\epsilon \to 0} \sup_{z \in C_{\epsilon,j}(r_j(\epsilon)+3\epsilon)} \mathbb{P}_{z}( \sigma^{\epsilon,i} \geq t(\epsilon) ) = 0.
	\end{equation}
\end{Lemma}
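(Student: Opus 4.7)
The strategy is to reduce both claims to convergence results for the induced vertex-to-vertex skeleton of $Z^\epsilon$, using the strong Markov property and the one-step estimates from Section \ref{sec exit}. Set $T_0 \coloneqq 0$, $j_0 \coloneqq j$, and recursively
\begin{equation*}
T_n \coloneqq \inf\{t \geq T_{n-1}: Z^\epsilon(t) \in C_{\epsilon,j'}(r_{j'}(\epsilon)+3\epsilon) \text{ for some } j' \neq j_{n-1}\},
\end{equation*}
with $j_n$ the corresponding vertex index. With $\vec{L}^\epsilon_{j_{n-1}}$ as in \eqref{L ep}, the inclusion $C_{\epsilon,j_{n-1}}^k(L^\epsilon_{j_{n-1},k}) \subset C_{\epsilon,j'}(r_{j'}(\epsilon)+3\epsilon)$ when $I_k\sim O_{j_{n-1}}, O_{j'}$ identifies $T_n - T_{n-1}$ and $j_n$ with the exit time and exit place studied in Section \ref{sec exit}. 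Setting $N \coloneqq \inf\{n \geq 1 : r_{j_n}(\epsilon) \gg r_{(i)}(\epsilon)\}$, one has $\sigma^{\epsilon,i} = T_N$.

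For \eqref{dis exit pl}, Lemma \ref{exit place est new} and the strong Markov property yield, for each $n$ with $r_{j_{n-1}}(\epsilon) \lesssim r_{(i)}(\epsilon)$,
\begin{equation*}
\mathbb{P}_z(j_n = O_{j'} \mid \mathcal{F}_{T_{n-1}}) = p^i(O_{j_{n-1}}, O_{j'}) + o(1),
\end{equation*}
uniformly in $Z^\epsilon(T_{n-1}) \in C_{\epsilon,j_{n-1}}(r_{j_{n-1}}(\epsilon)+3\epsilon)$. Because the graph is connected with at least one absorbing vertex (of level $\geq i+1$) and each $p_{j,k}(\vec{L}) > 0$, the chain $X_n^i$ absorbs geometrically: $\mathbb{P}(N_{\text{chain}} > n) \leq C\rho^n$ for some $\rho \in (0,1)$ and $C>0$. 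Writing
\begin{equation*}
\mathbb{P}_z(j_N = O_{j'}) = \sum_{n=1}^{n^\ast} \mathbb{P}_z(N = n, j_n = O_{j'}) + \mathbb{P}_z(N > n^\ast),
\end{equation*}
one first lets $\epsilon \to 0$ (the finite sum converging to the analogous expression for $X_n^i$ by iterating the one-step estimate) and then $n^\ast \to \infty$ to obtain $\mu^i(O_j, O_{j'})$.

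For \eqref{dis exit time lower}, Lemma \ref{small exit est 2} gives $\mathbb{E}_z(T_n - T_{n-1} \mid \mathcal{F}_{T_{n-1}}) \leq C r_{j_{n-1}}(\epsilon)^d/\epsilon^{d-1} \leq C T_\epsilon^i$ on $\{N \geq n\}$, and together with the geometric tail of $N$,
\begin{equation*}
\mathbb{E}_z \sigma^{\epsilon,i} = \sum_{n=1}^\infty \mathbb{E}_z\bigl[(T_n - T_{n-1})\mathbbm{1}_{\{N \geq n\}}\bigr] \leq C T_\epsilon^i,
\end{equation*}
so Markov's inequality yields \eqref{dis exit time lower} whenever $t(\epsilon) \gg T_\epsilon^i$. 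The main technical obstacle is the accumulation of the $o(1)$ transition errors over the random number $N$ of excursions in the proof of \eqref{dis exit pl}; this is resolved by exploiting the exponential tail of $N$ to truncate at a deterministic $n^\ast$, iterating the one-step estimate finitely many times with uniform control in the starting position, and finally sending $n^\ast \to \infty$.
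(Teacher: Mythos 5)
Your proof is correct and follows essentially the same approach as the paper: the same vertex-to-vertex excursion skeleton $T_n$, Lemma~\ref{exit place est new} with $\vec{L}^\epsilon$ as in \eqref{L ep} for the one-step transition probabilities, Lemma~\ref{small exit est 2} for the one-step expected excursion time, a uniform geometric tail on the number of excursions before absorption, and a truncation argument for \eqref{dis exit pl}. For \eqref{dis exit time lower} you bound $\mathbb{E}_z\sigma^{\epsilon,i}\lesssim T_\epsilon^i$ and invoke Markov's inequality instead of the paper's direct two-parameter $(\epsilon,M)$ tail-probability split, which is a cosmetic variation; the only point to tighten is that your display for $\mathbb{P}_z(j_N=O_{j'})$ should carry an inequality (the error is \emph{bounded by} $\mathbb{P}_z(N>n^\ast)$, not equal to it) and that the geometric tail must be stated uniformly for the diffusion's $N$ rather than only for the limiting chain $X_n^i$ --- which follows by iterating the uniform one-step estimate, exactly as the paper records through $\inf_{\epsilon}\inf_{z}\mathbb{P}_z(\sigma^{\epsilon,i}<\tau_N^\epsilon)\geq c$.
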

\begin{proof}
Let $\tau_n^\epsilon$ denote the successive hitting times at which $Z^\epsilon$ visits different sets $C_{\epsilon,j}(r_j (\epsilon) + 3\epsilon)$. 
Specifically, set
$\tau_{0}^\epsilon \coloneqq 0$, and for $n \geq 1$, if $Z^\epsilon(\tau_{n-1}^\epsilon) \in C_{\epsilon,j}( r_j (\epsilon) + 3\epsilon)$, we define
\begin{equation}
	\tau_{n}^\epsilon \coloneqq \inf \left\{ t \geq \tau_{n-1}^\epsilon: Z^\epsilon(t) \in \bigcup_{i \neq j} C_{\epsilon,i}( r_i (\epsilon) + 3\epsilon) \right\}.
\end{equation}

	Equation \eqref{dis exit pl} follows directly from Lemma \ref{exit place est new} by taking $\vec{L}_j^\epsilon$ as in \eqref{L ep} and applying the definition of the transition probabilities of $X_n^i$.
	
	For \eqref{dis exit time lower}, note that for all $j$ such that $r_j(\epsilon) \lesssim r_{(i)}(\epsilon)$ there exist constants $c>0$ and $N>0$, such that
	\begin{equation}
		\inf_{\epsilon \in (0,1)} \inf_{z \in C_{\epsilon,j}(r_{j}(\epsilon)+3\epsilon)} \mathbb{P}_{z}( \sigma^{\epsilon,i} < \tau_N^\epsilon ) \geq c.
	\end{equation}
	Consequently, for all $j$ such that $r_j(\epsilon) \lesssim r_{(i)}(\epsilon)$,
	\begin{equation}
	\begin{aligned}
		&\sup_{z \in  C_{\epsilon,j}(r_{j}(\epsilon)+3\epsilon)} \mathbb{P}_{z}(\sigma^{\epsilon,i} \geq t(\epsilon)) \\
		&\leq \sum_{m=1}^{M} \sup_{z \in  C_{\epsilon,j}(r_{j}(\epsilon)+3\epsilon)} \mathbb{P}_{z}(\sigma^{\epsilon,i} \geq t(\epsilon),  \sigma^{\epsilon,i} = \tau_m^\epsilon )
		+ \sum_{m=M+1}^\infty \sup_{z \in  C_{\epsilon,j}(r_{j}(\epsilon)+3\epsilon)} \mathbb{P}_{z}(\sigma^{\epsilon,i} \geq t(\epsilon), \sigma^{\epsilon,i} = \tau_m^\epsilon ) \\
		& \eqqcolon I_1^\epsilon(M) + I_2^\epsilon(M).
	\end{aligned}
	\end{equation}
	For $I_2^\epsilon$, first note that for every $m \geq 0$, by the strong Markov property,
	\begin{equation}
		 \sup_{z \in  C_{\epsilon,j}(r_{j}(\epsilon)+3\epsilon)} \mathbb{P}_{z}(\sigma^{\epsilon,i} \geq t(\epsilon), \sigma^{\epsilon,i} = \tau_m^\epsilon)
         \leq \sup_{z \in  C_{\epsilon,j}(r_{j}(\epsilon)+3\epsilon)} \mathbb{P}_{z}(\sigma^{\epsilon,i} \geq \tau_m^\epsilon)
         \leq (1-c)^{ \lfloor  \frac{m}{N} \rfloor  }.
	\end{equation}
	For every $\eta >0$, there exists $M_\eta >0$ such that
    \begin{equation}\label{dis eq 1}
        I_2^\epsilon(M_\eta) \leq \sum_{m=M_\eta+1}^\infty (1-c)^{ \lfloor  \frac{m}{N} \rfloor  } < \eta.
    \end{equation}
	Next, for $I_1^\epsilon$, note that, by the strong Markov property and Lemma \ref{small exit est 2}
	\begin{equation}
    \begin{aligned}
		&\sup_{z \in C_{\epsilon,j}(r_{j}(\epsilon)+3\epsilon)} \mathbb{P}_{z}(\sigma^{\epsilon,i} = \tau_{m}^\epsilon,  \tau_{m}^\epsilon \geq t(\epsilon))
        \leq \sup_{z \in C_{\epsilon,j}(r_{j}(\epsilon)+3\epsilon)}  \frac{\mathbb{E}_z \tau_m^\epsilon \mathbbm{1}_{ \{ \sigma^{\epsilon,i} = \tau_m^\epsilon \} } }{t(\epsilon)} \\
		& = \sup_{z \in C_{\epsilon,j}(r_{j}(\epsilon)+3\epsilon)}  \frac{\mathbb{E}_z \tau_m^\epsilon \mathbbm{1}_{ \{ \sigma^{\epsilon,i} = \tau_m^\epsilon \} } \mathbbm{1}_{ \{ \sigma^{\epsilon,i} > \tau_{m-1}^\epsilon \} } }{t(\epsilon)} \\
        &\leq m  \frac{ \max_{j'} \sup_{z \in C_{\epsilon,j'}(r_{j'}(\epsilon)+3\epsilon)} \mathbb{E}_{z} \sigma_{j'}^\epsilon(\vec{L}^\epsilon)}{t(\epsilon)}
		\leq c m  \frac{T_\epsilon^i}{t(\epsilon)},
    \end{aligned}
	\end{equation}
	where the maximum is taken over all $j'$ such that $r_{j'}(\epsilon) \lesssim r_{(i)}(\epsilon)$.
	As a result, we have
	\begin{equation}\label{dis eq 2}
	\lim_{\epsilon \to 0} I_1^\epsilon(M_\eta) \leq \lim_{\epsilon \to 0} \sum_{m=1}^{M_\eta}  c m  \frac{T_\epsilon^i}{t(\epsilon)}= 0.
	\end{equation}
    Combining \eqref{dis eq 1} and \eqref{dis eq 2}, we have
    \begin{equation}
        \lim_{\epsilon \to 0} \sup_{z \in  C_{\epsilon,j}(r_{j}(\epsilon)+3\epsilon)} \mathbb{P}_{z}(\sigma^{\epsilon,i} \geq t(\epsilon)) \leq \eta.
    \end{equation}
	Since $\eta >0$ is arbitrary, we conclude that \eqref{dis exit time lower} holds.
\end{proof}

\begin{Lemma}
    For every $i=1,\cdots,\abs{V'}-1$, every $t(\epsilon) > 0$ with $t(\epsilon) \ll T_{\epsilon}^{i+1}$, every $j$ such that $r_j(\epsilon) \gg r_{(i)}(\epsilon)$, and all $\delta > 0$, we have
	\begin{equation}\label{dis exit time upper}
		\lim_{\epsilon \to 0} \sup_{z \in  C_{\epsilon,j}(r_{j}(\epsilon)+3\epsilon)} \mathbb{P}_{z}( \sigma_j^\epsilon(\delta) \leq t(\epsilon) ) = 0.
	\end{equation}
\end{Lemma}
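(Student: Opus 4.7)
The plan is to reduce the statement directly to the asymptotic exponential law of Theorem \ref{large exit time est}, applied at the vertex $O_j$ with the constant choice $L_{j,k}^\epsilon \equiv \delta$ for every edge $I_k \sim O_j$. Under this choice $\sigma_j^\epsilon(\vec{L}_j^\epsilon)$ coincides with the object $\sigma_j^\epsilon(\delta)$ of Section \ref{sec prev re}, and the normalization constant appearing in Theorem \ref{large exit time est} collapses to
\[
\alpha_j(\epsilon) \cdot \frac{\sum_{l:I_l \sim O_j} \lambda_l^{d-1}}{\sum_{k:I_k \sim O_j} \lambda_k^{d-1}/\delta} = \alpha_j(\epsilon)\,\delta.
\]
Consequently, for every fixed $s > 0$,
\[
\lim_{\epsilon \to 0}\sup_{z \in C_{\epsilon,j}(r_j(\epsilon)+3\epsilon)} \left| \mathbb{P}_z\!\left( \sigma_j^\epsilon(\delta) \geq s\,\alpha_j(\epsilon)\,\delta \right) - e^{-s} \right| = 0.
\]

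The second step is a matching-of-scales argument. The hypothesis $r_j(\epsilon) \gg r_{(i)}(\epsilon)$, together with the strict ordering $r_{(1)}(\epsilon) \ll \cdots \ll r_{(\abs{V'})}(\epsilon)$, forces $r_j(\epsilon) \asymp r_{(i')}(\epsilon)$ for some $i' \geq i+1$. Therefore
\[
\alpha_j(\epsilon) \asymp \frac{r_j(\epsilon)^d}{\epsilon^{d-1}} \gtrsim T_\epsilon^{i+1},
\]
and combining this with the assumption $t(\epsilon) \ll T_\epsilon^{i+1}$ yields $t(\epsilon)/(\alpha_j(\epsilon)\,\delta) \to 0$ as $\epsilon \to 0$.

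Finally, for each fixed $s > 0$ one picks $\epsilon_0(s) > 0$ such that $t(\epsilon) < s\,\alpha_j(\epsilon)\,\delta$ for all $\epsilon \in (0,\epsilon_0(s))$. Then
\[
\sup_{z \in C_{\epsilon,j}(r_j(\epsilon)+3\epsilon)} \mathbb{P}_z\!\left( \sigma_j^\epsilon(\delta) \leq t(\epsilon) \right) \leq \sup_{z} \mathbb{P}_z\!\left( \sigma_j^\epsilon(\delta) \leq s\,\alpha_j(\epsilon)\,\delta \right),
\]
and the right-hand side tends to $1 - e^{-s}$ as $\epsilon \to 0$ by the exponential law. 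Letting $s \to 0$ afterwards yields the desired conclusion. No substantive new estimate is needed; the whole argument is a consequence of Theorem \ref{large exit time est} and bookkeeping of asymptotic scales. The only minor point to be careful about is the interchange of the $\epsilon \to 0$ and $s \to 0$ limits, which is handled cleanly by the uniformity in $z$ already built into Theorem \ref{large exit time est}.
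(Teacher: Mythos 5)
Your proof is correct and follows essentially the same route as the paper. The paper likewise invokes the exponential law \eqref{exp est 1} with the constant choice $L_k = \delta$ (so that the normalization collapses to $\alpha_j(\epsilon)\delta$), uses $r_j(\epsilon) \gg r_{(i)}(\epsilon)$ to get $\alpha_j(\epsilon) \geq c\, T_\epsilon^{i+1}$, and then — for a prescribed $\eta$ — chooses $t_\eta$ with $e^{-t_\eta} \geq 1-\eta/2$ and shows $t(\epsilon) < t_\eta \alpha_j(\epsilon)\delta$ for small $\epsilon$; this is simply the $\eta$-formulation of the limit interchange $\epsilon \to 0$ before $s \to 0$ that you describe.
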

\begin{proof}
    For all $\eta >0$, let $t_\eta > 0$ such that $e^{-t_\eta} \geq 1-\frac{\eta}{2}$.
	For all $j$ such that $r_j(\epsilon) \gg r_{(i)}(\epsilon)$, there exist $\epsilon_{0} >0$ and $c >0$ such that for all $\epsilon \in (0,\epsilon_{0})$,
	\begin{equation}
		\alpha_j(\epsilon)  = \frac{r_j(\epsilon)^d V_d}{\sum_{k:I_k \sim O_j} \lambda_k^{d-1} \epsilon^{d-1} V_{d-1}} \geq c T_\epsilon^{i+1}.
	\end{equation}
	Moreover, there exists $\epsilon_{1,\eta} \in (0,\epsilon_{0})$ such that for all $\epsilon \in (0,\epsilon_{1,\eta})$,
	\begin{equation}
		\frac{t(\epsilon)}{\alpha_j(\epsilon)\delta} \leq \frac{t(\epsilon)}{cT_\epsilon^{i+1}\delta} < t_\eta.
	\end{equation}
	We have
	\begin{equation}
		\mathbb{P}_z (\sigma_j^\epsilon(\delta) \geq t(\epsilon))
		\geq \mathbb{P}_z (\sigma_j^\epsilon(\delta) \geq t_\eta \alpha_j(\epsilon)\delta).
	\end{equation}
	By \eqref{exp est 1}, there exists $\epsilon_\eta \in (0,\epsilon_{1,\eta})$ such that
	\begin{equation}
		\mathbb{P}_z (\sigma^\epsilon(\delta) \geq t_\eta \alpha_j(\epsilon)\delta) \geq e^{-t_\eta}-\frac{\eta}{2} \geq 1-\eta.
	\end{equation}
	We then conclude that
	\begin{equation}
		\mathbb{P}_z (\sigma^\epsilon(\delta) \leq t(\epsilon))
		\leq 1-\mathbb{P}_z (\sigma^\epsilon(\delta) \geq t_\eta \alpha_j(\epsilon)\delta)
		\leq \eta.
	\end{equation}
\end{proof}

\begin{Theorem}\label{conv diff}
	For every $i=1,\cdots,\abs{V'}-1$, every $t(\epsilon) > 0$ such that $T_{\epsilon}^i \ll t(\epsilon) \ll T_{\epsilon}^{i+1}$ and any continuous function $F \in C(\Gamma)$, we have
	\begin{equation}
		\lim_{\epsilon \to 0} \sup_{z \in C_{\epsilon,j}(r_j(\epsilon)+3\epsilon)} \abs{ \mathbb{E}_{z} (F(\Pi^\epsilon(Z^\epsilon(t(\epsilon)))) - \sum_{j'=1}^{\abs{V}} F(O_{j'}) \mu^i(O_j,O_{j'}) } = 0.
	\end{equation}
\end{Theorem}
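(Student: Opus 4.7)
The plan is to split the expectation at the first time $\sigma^{\epsilon,i}$ that $Z^\epsilon$ reaches one of the ``deep'' vertex neighborhoods $\bigcup_{j':\, r_{j'}(\epsilon)\gg r_{(i)}(\epsilon)} C_{\epsilon,j'}(r_{j'}(\epsilon)+3\epsilon)$, which by construction are precisely the absorbing states of the discrete chain $X_n^i$. Writing
\begin{equation*}
\mathbb{E}_z F(\Pi^\epsilon(Z^\epsilon(t(\epsilon))))
= \mathbb{E}_z\bigl[\mathbbm{1}_{\{\sigma^{\epsilon,i}>t(\epsilon)\}} F(\Pi^\epsilon(Z^\epsilon(t(\epsilon))))\bigr]
+ \sum_{j':\, r_{j'}(\epsilon)\gg r_{(i)}(\epsilon)} \mathbb{E}_z\bigl[\mathbbm{1}_{A_{j'}}\, F(\Pi^\epsilon(Z^\epsilon(t(\epsilon))))\bigr],
\end{equation*}
where $A_{j'}=\{\sigma^{\epsilon,i}\leq t(\epsilon),\; Z^\epsilon(\sigma^{\epsilon,i})\in C_{\epsilon,j'}(r_{j'}(\epsilon)+3\epsilon)\}$, the first term vanishes as $\epsilon\to 0$. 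If $r_j(\epsilon)\lesssim r_{(i)}(\epsilon)$, this follows from \eqref{dis exit time lower} of Lemma \ref{lem dis mar} together with boundedness of $F$; if $r_j(\epsilon)\gg r_{(i)}(\epsilon)$, then $\sigma^{\epsilon,i}=0$ identically under $\mathbb{P}_z$ and the term is zero.

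For the sum, I would fix $\delta>0$ and apply the strong Markov property at $\sigma^{\epsilon,i}$ to each summand. On $A_{j'}$ the post-$\sigma^{\epsilon,i}$ process starts in $C_{\epsilon,j'}(r_{j'}(\epsilon)+3\epsilon)$ with $r_{j'}(\epsilon)\gg r_{(i)}(\epsilon)$, so the subsequent lemma (the upper bound \eqref{dis exit time upper}, valid because $t(\epsilon)\ll T_\epsilon^{i+1}$) gives
\begin{equation*}
\sup_{z'\in C_{\epsilon,j'}(r_{j'}(\epsilon)+3\epsilon)} \mathbb{P}_{z'}\bigl(\sigma_{j'}^\epsilon(\delta)\leq t(\epsilon)\bigr) \longrightarrow 0 \quad \text{as } \epsilon\to 0.
\end{equation*}
On the complementary event the entire trajectory from $\sigma^{\epsilon,i}$ to $t(\epsilon)$ remains in $B_{\epsilon,j'}(\delta)$, whence $d_\Gamma(\Pi^\epsilon(Z^\epsilon(t(\epsilon))),O_{j'})\leq\delta$ by the defining property of $\Pi^\epsilon$, and uniform continuity of $F$ on the compact set $\Gamma$ yields $| F(\Pi^\epsilon(Z^\epsilon(t(\epsilon)))) - F(O_{j'}) | \leq \omega_F(\delta)$ for a modulus of continuity $\omega_F$.

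Combining these with \eqref{dis exit pl} of Lemma \ref{lem dis mar} — which asserts $\mathbb{P}_z(Z^\epsilon(\sigma^{\epsilon,i})\in C_{\epsilon,j'}(r_{j'}(\epsilon)+3\epsilon))\to \mu^i(O_j,O_{j'})$ uniformly in $z$ — the $j'$-th summand converges to $F(O_{j'})\mu^i(O_j,O_{j'})$ up to an error controlled by $\omega_F(\delta) + o_\epsilon(1)$. Letting $\epsilon\to 0$ first and then $\delta\to 0$ yields the stated limit. Since $\mu^i(O_j,O_{j'})=0$ whenever $j'$ is a transient state of $X_n^i$ (i.e.\ $r_{j'}(\epsilon)\lesssim r_{(i)}(\epsilon)$), the restricted sum agrees with $\sum_{j'=1}^{\abs{V}}$ in the theorem statement.

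The one point that demands care is the uniformity in the application of the strong Markov property: the error coming from uniform continuity of $F$ must be controlled uniformly over every admissible post-$\sigma^{\epsilon,i}$ starting point in $C_{\epsilon,j'}(r_{j'}(\epsilon)+3\epsilon)$ and uniformly in the remaining time budget $t(\epsilon)-\sigma^{\epsilon,i}\in[0,t(\epsilon)]$, which is exactly what the uniform-in-$z$ forms of \eqref{dis exit time lower} and \eqref{dis exit time upper} provide. Beyond this, the remainder of the argument is straightforward bookkeeping.
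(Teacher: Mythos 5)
Your proposal is correct and follows essentially the same approach as the paper: split at the absorption time $\sigma^{\epsilon,i}$, kill the pre-absorption tail via \eqref{dis exit time lower}, confine the post-absorption trajectory to a $\delta$-neighborhood via \eqref{dis exit time upper}, and invoke \eqref{dis exit pl} together with uniform continuity of $F$ before sending $\epsilon \to 0$ then $\delta \to 0$. The only organizational difference is that you decompose by the absorbing vertex $j'$ up front, whereas the paper keeps a single stopping time $\tau^{\epsilon,i}(\delta)$ and an explicit four-term splitting $J_{2,1}^\epsilon,\dots,J_{2,4}^\epsilon$; the two bookkeepings are equivalent.
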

\begin{proof}
First, consider $z \in C_{\epsilon,j}(r_j(\epsilon)+3\epsilon)$ such that $r_j(\epsilon) \lesssim r_{(i)}(\epsilon)$.
Note that
\begin{equation}
\begin{aligned}
	& \mathbb{E}_{z} F(\Pi^\epsilon(Z^\epsilon(t(\epsilon))) \\
	&= \mathbb{E}_{z} \left( F(\Pi^\epsilon(Z^\epsilon(t(\epsilon))) \mathbbm{1}_{\{ \sigma^{\epsilon,i} \geq t(\epsilon) \}} \right)
	+ \mathbb{E}_{z} \left( F(\Pi^\epsilon(Z^\epsilon(t(\epsilon))) \mathbbm{1}_{\{ \sigma^{\epsilon,i} < t(\epsilon) \}} \right) \\
	&\eqqcolon I_{1}^\epsilon + I_2^\epsilon.
\end{aligned}
\end{equation}
For $I_1^\epsilon$, thanks to \eqref{dis exit time lower}, we have
\begin{equation}
\begin{aligned}
	&\sup_{z \in C_{\epsilon,j}(r_j(\epsilon)+3\epsilon)} \abs{\mathbb{E}_{z} \left( F(\Pi^\epsilon(Z^\epsilon(t(\epsilon))) \mathbbm{1}_{\{ \sigma^{\epsilon,i} \geq t(\epsilon) \}} \right)} \\
	&\leq \abs{F}_{C(\Gamma)} \sup_{z \in C_{\epsilon,j}(r_j(\epsilon)+3\epsilon)}  \mathbb{P}_{z}( \sigma^{\epsilon,i} \geq t(\epsilon) )
	\to 0, \text{ as } \epsilon \to 0.    
\end{aligned}
\end{equation}
For $I_2^\epsilon$, first define the stopping time
\begin{equation*}
	\tau^{\epsilon,i}(\delta) \coloneqq \inf \{ t \geq \sigma^{\epsilon,i}: Z^\epsilon(t) \in \bigcup_{j=1}^{\abs{V}} C_{\epsilon,j}(\delta) \}.
\end{equation*}
We obtain
\begin{equation}
\begin{aligned}
	&I_2^\epsilon = \mathbb{E}_{z} \left( F(\Pi^\epsilon(Z^\epsilon(t(\epsilon))) \mathbbm{1}_{\{ \sigma^{\epsilon,i} < t(\epsilon) \}} \right) \\
	&= \mathbb{E}_{z} \left( [F(\Pi^\epsilon(Z^\epsilon(t(\epsilon)))- F(\Pi^\epsilon(Z^\epsilon(\sigma^{\epsilon,i})) ] \mathbbm{1}_{\{ \sigma^{\epsilon,i} < t(\epsilon) \}} \right)
	+ \mathbb{E}_{z} \left( F(\Pi^\epsilon(Z^\epsilon(\sigma^{\epsilon,i})) \mathbbm{1}_{\{ \sigma^{\epsilon,i} < t(\epsilon) \}} \right)\\
	& = \mathbb{E}_{z} \left( [F(\Pi^\epsilon(Z^\epsilon(t(\epsilon)))- F(\Pi^\epsilon(Z^\epsilon(\sigma^{\epsilon,i})) ] \mathbbm{1}_{\{ \sigma^{\epsilon,i} < t(\epsilon) \}} \mathbbm{1}_{\{ \tau^{\epsilon,i}(\delta) > t(\epsilon) \}}  \right) \\
	& \ \ \ + \mathbb{E}_{z} \left( [F(\Pi^\epsilon(Z^\epsilon(t(\epsilon)))- F(\Pi^\epsilon(Z^\epsilon(\sigma^{\epsilon,i})) ] \mathbbm{1}_{\{ \sigma^{\epsilon,i} < t(\epsilon) \}} \mathbbm{1}_{\{ \tau^{\epsilon,i}(\delta) \leq t(\epsilon) \}}  \right) \\
	& \ \ \ \ \ + \mathbb{E}_{z} \left( F(\Pi^\epsilon(Z^\epsilon(\sigma^{\epsilon,i})) \mathbbm{1}_{\{ \sigma^{\epsilon,i} < t(\epsilon) \}} \right) - \mathbb{E}_{z} \left( F(\Pi^\epsilon(Z^\epsilon(\sigma^{\epsilon,i}))  \right) \\
	& \ \ \ \ \ \ \ + \mathbb{E}_{z} \left( F(\Pi^\epsilon(Z^\epsilon(\sigma^{\epsilon,i}))  \right) \\
	& \ \ \ \ \ \ \ \ \   \eqqcolon J_{2,1}^\epsilon + J_{2,2}^\epsilon + J_{2,3}^\epsilon + J_{2,4}^\epsilon.
\end{aligned}
\end{equation}
For $J_{2,1}^\epsilon$, by the continuity of $F$, we have
\begin{equation}
\begin{aligned}
	&\sup_{z \in C_{\epsilon,j}(r_j(\epsilon)+3\epsilon)} \abs{ \mathbb{E}_{z} \left( [F(\Pi^\epsilon(Z^\epsilon(t(\epsilon)))- F(\Pi^\epsilon(Z^\epsilon(\sigma^{\epsilon,i})) ] \mathbbm{1}_{\{ \sigma^{\epsilon,i} < t(\epsilon) \}} \mathbbm{1}_{\{ \tau^{\epsilon,i}(\delta) > t(\epsilon) \}}  \right)	} \\
	& \leq \sup_{z \in C_{\epsilon,j}(r_j(\epsilon)+3\epsilon)} \mathbb{E}_{z} \abs{ [F(\Pi^\epsilon(Z^\epsilon(t(\epsilon)))- F(\Pi^\epsilon(Z^\epsilon(\sigma^{\epsilon,i})) ] \mathbbm{1}_{\{ \sigma^{\epsilon,i} < t(\epsilon) \}} \mathbbm{1}_{\{ \tau^{\epsilon,i}(\delta) > t(\epsilon) \}}  }
	= o_{\epsilon,\delta}(1).
	\end{aligned}
\end{equation}
For $J_{2,2}^\epsilon$, due to \eqref{dis exit time upper}, we get
\begin{equation}
\begin{aligned}
	&\sup_{z \in C_{\epsilon,j}(r_j(\epsilon)+3\epsilon)} \abs{\mathbb{E}_{z} \left( [F(\Pi^\epsilon(Z^\epsilon(t(\epsilon)))- F(\Pi^\epsilon(Z^\epsilon(\sigma^{\epsilon,i})) ] \mathbbm{1}_{\{ \sigma^{\epsilon,i} < t(\epsilon) \}} \mathbbm{1}_{\{ \tau^{\epsilon,i}(\delta) \leq t(\epsilon) \}}  \right)} \\
	& \leq 2\abs{F}_{C(\Gamma)}
	\sup_{z \in C_{\epsilon,j}(r_j(\epsilon)+3\epsilon)} \mathbb{E}_{z}  \left[ \mathbbm{1}_{\{ \sigma^{\epsilon,i} < t(\epsilon) \}} \mathbbm{1}_{\{ \tau^{\epsilon,i}(\delta) \leq t(\epsilon) \}} \right] \\
	& \leq 2\abs{F}_{C(\Gamma)}
	\sup_{z \in C_{\epsilon,j}(r_j(\epsilon)+3\epsilon)} \mathbb{E}_{z}  \left[ \mathbbm{1}_{\{ \sigma^{\epsilon,i} < t(\epsilon) \}} \sum_{j'}  \mathbbm{1}_{\{ Z^\epsilon(\sigma^{\epsilon,i}) \in C_{\epsilon,j'}(r_{j'}(\epsilon)+3\epsilon) \}} \mathbb{P}_{\sigma^{\epsilon,i}} ( \sigma_{j'}^{\epsilon}(\delta) \leq t(\epsilon) ) \right] \\
	& \leq 2\abs{F}_{C(\Gamma)} \max_{j'} \sup_{z \in C_{\epsilon,j'}(r_{j'}(\epsilon)+3\epsilon)} \mathbb{P}_{z} (\sigma_{j'}^\epsilon(\delta) \leq t(\epsilon)) \to 0 \text{ as } \epsilon \to 0,
\end{aligned}
\end{equation}
where the maximum is taken over all $j'$ such that $r_{j'}(\epsilon) \gg r_{(i)}(\epsilon)$.
For $J_{2,3}^\epsilon$, by \eqref{dis exit time lower}, we have
\begin{equation}
\begin{aligned}
	&\sup_{z \in C_{\epsilon,j}(r_j(\epsilon)+3\epsilon)} \abs{\mathbb{E}_{z} \left( F(\Pi^\epsilon(Z^\epsilon(\sigma^{\epsilon,i})) \mathbbm{1}_{\{ \sigma_\epsilon^{i} < t(\epsilon) \}} \right) - \mathbb{E}_{z} \left( F(\Pi^\epsilon(Z^\epsilon(\sigma^{\epsilon,i}))  \right)}
	\\
	& \leq \sup_{z \in C_{\epsilon,j}(r_j(\epsilon)+3\epsilon)} \mathbb{E}_{z} \abs{F(\Pi^\epsilon(Z^\epsilon(\sigma^{\epsilon,i})) \mathbbm{1}_{\{ \sigma^{\epsilon,i} \geq t(\epsilon) \}} } \\
	& \leq \abs{F}_{C(\Gamma)} \sup_{z \in C_{\epsilon,j}(r_j(\epsilon)+3\epsilon)} \mathbb{P}_{z} (\sigma^{\epsilon,i} \geq t(\epsilon)) \to 0 \text{ as } \epsilon \to 0.
\end{aligned}
\end{equation}
Finally, combining all of the above, we have
\begin{equation}
\begin{aligned}
	&\sup_{z \in C_{\epsilon,j}(r_j(\epsilon)+3\epsilon)} \abs{ \mathbb{E}_{z} (F(\Pi^\epsilon(Z^\epsilon(t(\epsilon)))) - \sum_{j'=1}^{\abs{V}} F(O_{j'}) \mu^i(O_j,O_{j'}) } \\
	&\leq \sup_{z \in C_{\epsilon,j}(r_j(\epsilon)+3\epsilon)} \abs{ \mathbb{E}_{z} (F(\Pi^\epsilon(Z^\epsilon(\sigma^{\epsilon,i}))) - \sum_{j'=1}^{\abs{V}} F(O_{j'}) \mu^i(O_j,O_{j'}) } +o_{\epsilon,\delta}(1).
\end{aligned}
\end{equation}
By the continuity of $F$ and \eqref{dis exit pl}, we can conclude the result.

For $z \in C_{\epsilon,j}(r_j(\epsilon)+3\epsilon)$ such that $r_j(\epsilon) \gg r_{(i)}(\epsilon)$, the argument is similar.
\end{proof}

We aim to extend the result to all $z \in G_\epsilon$. 
To do so, first, if $x \in I_k$, which possess $O_{j},O_{j'}$ as its endpoints.
We define
\begin{equation}
	p(x,O_{j}) = \frac{d(x,O_{j'})}{\abs{I_k}},
\end{equation}
which represents the probability that a one-dimensional Brownian motion on $I_k$, starting at $x$, hits $O_{j}$ before reaching $O_{j'}$.
In addition, for $O_{j''} \neq O_{j},O_{j'}$, we define
\begin{equation}
    p(x,O_{j''}) = 0.
\end{equation}
For each $x \in \Gamma$, we introduce the measure $\mu^i(x,\cdot)$ on $\mathcal{S}$, defined as follows:
\begin{equation}
	\mu^i(x,O_{j'}) = \sum_{j=1}^{\abs{V}} p(x,O_j) \mu^i(O_j,O_{j'}).
\end{equation}

\begin{Corollary}\label{coro conv diff}
    For every $i=1,\cdots,\abs{V'}-1$, any $t(\epsilon) > 0$ such that $T_\epsilon^i \ll t(\epsilon) \ll T_\epsilon^{i+1}$ and any continuous function $F \in C(\Gamma)$, we have
	\begin{equation}
		\lim_{\epsilon \to 0} \sup_{x \in \Gamma} \sup_{z: \Pi^\epsilon(z)=x} \abs{ \mathbb{E}_{z} (F(\Pi^\epsilon(Z^\epsilon(t(\epsilon)))) - \sum_{j'=1}^{\abs{V}} F(O_{j'}) \mu^i(x,O_{j'}) } = 0.
	\end{equation}
\end{Corollary}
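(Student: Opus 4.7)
The plan is to reduce the general starting point to the case already handled by Theorem \ref{conv diff} via the strong Markov property applied at the first hitting time of the union of vertex-neighborhood slices. Let
\begin{equation*}
    \tau^\epsilon := \inf\Bigl\{ t \geq 0 : Z^\epsilon(t) \in \bigcup_{j=1}^{\abs{V}} C_{\epsilon,j}(r_j(\epsilon)+3\epsilon) \Bigr\}.
\end{equation*}
First, since the projection of $Z^\epsilon$ onto any edge behaves as a reflecting one-dimensional Brownian motion on an interval of length at most $\max_k \abs{I_k}$, a direct estimate yields $\sup_{z \in G_\epsilon} \mathbb{E}_z \tau^\epsilon \leq C$ for some $C$ independent of $\epsilon$. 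Because $t(\epsilon) \gg T_\epsilon^i \to \infty$, Markov's inequality gives $\sup_{z \in G_\epsilon} \Pro_z(\tau^\epsilon > t(\epsilon)/2) \to 0$.

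Next, for each $x \in \Gamma$, I would establish the following hitting-place convergence: if $x \in I_k$ with $I_k \sim O_j, O_{j'}$, then for every $j'' \in \{1, \ldots, \abs{V}\}$,
\begin{equation*}
    \lim_{\epsilon \to 0} \sup_{z: \Pi^\epsilon(z) = x} \abs{\Pro_z\bigl(Z^\epsilon(\tau^\epsilon) \in C_{\epsilon,j''}(r_{j''}(\epsilon)+3\epsilon)\bigr) - p(x, O_{j''})} = 0.
\end{equation*}
For $j'' \notin \{j, j'\}$ the limit is $0$; for $j'' \in \{j, j'\}$ it is the classical gambler's-ruin probability $p(x,O_j) = d(x,O_{j'})/\abs{I_k}$ (and analogously for $O_{j'}$), obtained by comparison of $\Pi(Z^\epsilon)$ on $I_k$ with a one-dimensional Brownian motion. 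When $\Pi^\epsilon(z)$ lies in a vertex neighborhood (i.e.\ $x = O_j$), the definition yields $p(O_j, O_j) = 1$ and $p(O_j, O_{j''}) = 0$ for $j'' \neq j$, which is also consistent since in that case $Z^\epsilon(\tau^\epsilon) \in C_{\epsilon, j}(r_j(\epsilon)+3\epsilon)$ with overwhelming probability.

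Finally, split
\begin{equation*}
    \mathbb{E}_z F(\Pi^\epsilon(Z^\epsilon(t(\epsilon)))) = \mathbb{E}_z\bigl[ F(\Pi^\epsilon(Z^\epsilon(t(\epsilon)))) \mathbbm{1}_{\{\tau^\epsilon \leq t(\epsilon)/2\}}\bigr] + \mathbb{E}_z\bigl[ F(\Pi^\epsilon(Z^\epsilon(t(\epsilon)))) \mathbbm{1}_{\{\tau^\epsilon > t(\epsilon)/2\}}\bigr].
\end{equation*}
The second term is bounded by $\abs{F}_{C(\Gamma)} \Pro_z(\tau^\epsilon > t(\epsilon)/2) = o(1)$ by step 1. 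For the first, apply the strong Markov property at $\tau^\epsilon$: on $\{\tau^\epsilon \leq t(\epsilon)/2\}$ the remaining time satisfies $T_\epsilon^i \ll t(\epsilon)/2 \leq t(\epsilon) - \tau^\epsilon \leq t(\epsilon) \ll T_\epsilon^{i+1}$, so Theorem \ref{conv diff} implies the inner conditional expectation converges to $\sum_{j'} F(O_{j'}) \mu^i(O_{j_*}, O_{j'})$, with $j_*$ indexing the slice containing $Z^\epsilon(\tau^\epsilon)$. Integrating against the distribution of $j_*$ and invoking step 2 gives
\begin{equation*}
    \mathbb{E}_z F(\Pi^\epsilon(Z^\epsilon(t(\epsilon)))) \to \sum_{j} p(x, O_j) \sum_{j'} F(O_{j'}) \mu^i(O_j, O_{j'}) = \sum_{j'} F(O_{j'}) \mu^i(x, O_{j'}),
\end{equation*}
uniformly in $z$ with $\Pi^\epsilon(z) = x$, which is the claim.

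The main obstacle is the hitting-place convergence in step 2, particularly the uniformity in $z$ near the cusp region where $\Pi^\epsilon \ne \Pi$ and the edge-coordinate system degenerates. I would handle this by combining a clean 1D Brownian comparison for $\Pi(Z^\epsilon)$ on the bulk of the edge with Lemma \ref{exit place est} to absorb the short excursions inside the vertex neighborhoods. The random-time application of Theorem \ref{conv diff} to the (random) duration $t(\epsilon) - \tau^\epsilon$ is a secondary point that is resolved by noting that the proof of Theorem \ref{conv diff} actually produces estimates uniform in $t(\epsilon)$ over any range $[t_1(\epsilon), t_2(\epsilon)]$ with $T_\epsilon^i \ll t_1 \leq t_2 \ll T_\epsilon^{i+1}$.
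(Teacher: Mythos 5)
Your overall plan — wait for the first hitting time $\tau^\epsilon$ of $\bigcup_j C_{\epsilon,j}(r_j(\epsilon)+3\epsilon)$, extract the one-dimensional gambler's-ruin hitting distribution $p(x,\cdot)$, and then apply the strong Markov property together with Theorem \ref{conv diff} — is exactly the paper's route, so the architecture is right. However, the first step contains a genuine gap: the claim $\sup_{z \in G_\epsilon} \mathbb{E}_z \tau^\epsilon \leq C$ is false. The 1D comparison you invoke is valid only when the process is in the tubular part $\Gamma_k^\epsilon$, where the edge coordinate of $Z^\epsilon$ really is a reflecting Brownian motion on an interval. For $z$ deep inside a vertex ball $B(O_j, r_j(\epsilon))$, reaching $C_{\epsilon,j}(r_j(\epsilon)+3\epsilon)$ is a narrow escape problem: the process must first find a window of radius $\sim \epsilon$ on $\partial B(O_j, r_j(\epsilon))$, and the corresponding mean first passage time is of order $r_j(\epsilon)^d/\epsilon^{d-2}$ (narrow escape asymptotics), which need not be bounded in $\epsilon$. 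In $d=3$, if $r_j(\epsilon) \gg \epsilon^{1/3}$, this diverges. Moreover, when $r_j(\epsilon) \gg r_{(i)}(\epsilon)$ (so $T_\epsilon^j \geq T_\epsilon^{i+1}$), the escape time $\asymp \epsilon T_\epsilon^j$ and the allowed time $t(\epsilon)$ both vanish relative to $T_\epsilon^{i+1}$, and depending on how fast $T_\epsilon^i \ll t(\epsilon) \ll T_\epsilon^{i+1}$, one can have $\mathbb{E}_z \tau^\epsilon \gg t(\epsilon)$. So Markov's inequality does not give $\Pro_z(\tau^\epsilon > t(\epsilon)/2) \to 0$ uniformly in $z$, and your error term $\mathbb{E}_z[ \cdot \, ; \tau^\epsilon > t(\epsilon)/2]$ is not negligible.

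The corollary is still true, and the fix is localized. Split the starting points $z$ according to which region contains $\Pi^\epsilon(z)$. For $z$ on an edge or inside a small ball (a ball with $r_j(\epsilon) \lesssim r_{(i)}(\epsilon)$), the narrow escape time is $\lesssim \epsilon T_\epsilon^i \ll T_\epsilon^i \ll t(\epsilon)$, so your decomposition goes through. For $z$ inside a large ball with $r_j(\epsilon) \gg r_{(i)}(\epsilon)$, the target is simply $F(O_j)$ since $\mu^i(O_j,\cdot)=\delta_{O_j}$, and you should \emph{not} wait for the slice; instead, use that $\sigma_j^\epsilon(\delta) \geq \tau^\epsilon$ to write
\begin{equation*}
\Pro_z\bigl(\sigma_j^\epsilon(\delta) \leq t(\epsilon)\bigr)
= \mathbb{E}_z\Bigl[\mathbbm{1}_{\{\tau^\epsilon \leq t(\epsilon)\}}\,
\Pro_{Z^\epsilon(\tau^\epsilon)}\bigl(\sigma_j^\epsilon(\delta) \leq t(\epsilon)-\tau^\epsilon\bigr)\Bigr]
\leq \sup_{z' \in C_{\epsilon,j}(r_j(\epsilon)+3\epsilon)}\Pro_{z'}\bigl(\sigma_j^\epsilon(\delta) \leq t(\epsilon)\bigr),
\end{equation*}
which tends to $0$ by the lemma preceding Theorem \ref{conv diff} (equation \eqref{dis exit time upper}). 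Hence $Z^\epsilon(t(\epsilon)) \in B_{\epsilon,j}(\delta)$ with probability $\to 1$, and the continuity of $F$ (letting $\delta \downarrow 0$) gives $\mathbb{E}_z F(\Pi^\epsilon(Z^\epsilon(t(\epsilon)))) \to F(O_j)$ uniformly, without any control on $\mathbb{E}_z\tau^\epsilon$. Your step 2 (hitting-place distribution from an edge point) and step 3 (strong Markov plus a uniform-in-$t$ version of Theorem \ref{conv diff}) are otherwise sound and match the paper's intent.
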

\begin{proof}
	For every $z \in G_\epsilon$, consider the first hitting time to $\bigcup_{j=1}^{\abs{V}} C_{\epsilon,j}(r_j(\epsilon)+3\epsilon)$.
	The results follows from the definition of $\mu^i(x,\cdot)$ and the property of one-dimensional Brownian motion.
\end{proof}

\subsection{Neumann problems}\label{sec pde}
In this section, we analyze the Cauchy linear problem associated with Laplacian $\Delta$ on the narrow tube $G_\epsilon$
\begin{equation}
\label{narrow PDE C}
\begin{aligned}
    \begin{dcases}
    \displaystyle
        \frac{\partial \rho_\epsilon }{\partial t} (t,z)
        = \Delta \rho_\epsilon (t,z),
        \ \  z \in G_\epsilon, \\
        \frac{\partial \rho_\epsilon }{\partial \nu_\epsilon} (t,z) = 0 , \ z \in 	\partial G_\epsilon, \ \ \
        \rho_\epsilon(0,z)=\varphi_\epsilon (z), \ z \in G_\epsilon, \ \ \
    \end{dcases}
\end{aligned}
\end{equation}
where $\varphi_\epsilon \in C(\bar{G_\epsilon})$.

It is well-known that the solution $\rho_\epsilon(t)$ to the equation \eqref{narrow PDE C} has a probabilistic representation in terms of solution to the diffusion process with reflection \eqref{diff eq}. 
Namely, it holds,
\begin{equation*}
\rho_\epsilon(t,z) = S_\epsilon(t) \varphi_\epsilon(z) = \mathbb{E}_z \varphi_\epsilon(Z^\epsilon(t)), \  \ \ t \geq 0, \ \ z \in G_\epsilon.
\end{equation*}
Our goal is to study the limiting behavior of $\rho_\epsilon(t)$.
We have already shown the convergence holds when $\varphi_\epsilon = F \circ \Pi^\epsilon$ for some $F \in C(\Gamma)$.
In this section, we prove the convergence holds for different initial conditions.

\begin{Definition}
	We say a sequence of functions $\{ \varphi_\epsilon \}_{\epsilon \in (0,1)}$ with $\varphi_\epsilon:G_\epsilon \to \mathbb{R}$ is equicontinuous, if for every $\eta>0$, there exists $\delta_\eta >0$ such that for every $\delta \in (0,\delta_\eta)$ and all $z,z' \in G_\epsilon$ such that $\abs{z-z'} \leq \delta$,
	we have
	\begin{equation*}
		\abs{\varphi_\epsilon(z)-\varphi_\epsilon(z')} \leq \eta.
	\end{equation*}
\end{Definition}

\begin{Definition}
	We say a sequence of continuous functions $\{ \varphi_\epsilon \}_{\epsilon \in (0,1)}$ with $\varphi_\epsilon:G_\epsilon \to \mathbb{R}$ is equibounded if there exists $M>0$, such that
	\begin{equation*}
		\sup_{\epsilon \in (0,1)} \abs{\varphi_\epsilon}_{C(\bar{G}_\epsilon)} \leq M .
	\end{equation*}
\end{Definition}

In particular, it is not hard to see $\varphi_\epsilon=\varphi|_{G_\epsilon}$ for some $\varphi \in C(\bar{G})$ is equicontinuous and equibounded.

\begin{Corollary}\label{coro conv pde spec}
    Let $\{ \varphi_\epsilon \}_{\epsilon \in (0,1)}$ be equicontinuous and equibounded.
    For every $i=1,\cdots,\abs{V'}-1$ and any $t(\epsilon) > 0$ such that $T_\epsilon^i \ll t(\epsilon) \ll T_\epsilon^{i+1}$, we have
    \begin{equation}
        \lim_{\epsilon \to 0} \sup_{x \in \Gamma} \sup_{z: \Pi^\epsilon(z)=x} \abs{
        \mathbb{E}_{z} \varphi_\epsilon (Z^\epsilon(t(\epsilon))) 
        - \sum_{j'=1}^{\abs{V}} \varphi_\epsilon (O_{j'}) \mu^i(x,j')
        } =0.
    \end{equation}
\end{Corollary}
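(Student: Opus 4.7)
The strategy is to reduce to Corollary~\ref{coro conv diff} using two ingredients: at the intermediate scale $T_\epsilon^i \ll t(\epsilon) \ll T_\epsilon^{i+1}$, the process $Z^\epsilon(t(\epsilon))$ concentrates in arbitrarily small Euclidean neighborhoods of the absorbing vertices $V_{\mathrm{abs}} \coloneqq \{j' : r_{j'}(\epsilon) \gg r_{(i)}(\epsilon)\}$ of the chain $X_n^i$; and on each such neighborhood, equicontinuity renders $\varphi_\epsilon$ uniformly close to $\varphi_\epsilon(O_{j'})$.

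Fix $\eta > 0$ and, by equicontinuity, choose $\delta_\eta > 0$ such that $\abs{\varphi_\epsilon(z) - \varphi_\epsilon(z')} < \eta$ whenever $z, z' \in G_\epsilon$ satisfy $\abs{z - z'} < \delta_\eta$, uniformly in $\epsilon$. Pick $\delta \in (0, \delta_\eta/4)$ so small that the graph balls $B_\Gamma(O_{j'}, \delta)$, $j' \in V_{\mathrm{abs}}$, are pairwise disjoint. The narrow-tube geometry of Section~\ref{sec narrow tube} and the construction of $\Pi^\epsilon$ then guarantee that, for $\epsilon$ small (so that $r_{j'}(\epsilon) + 2\epsilon + \max_k \lambda_k \epsilon \leq \delta_\eta/2$), any $z \in G_\epsilon$ with $d_\Gamma(\Pi^\epsilon(z), O_{j'}) \leq \delta/2$ satisfies $\abs{z - O_{j'}} < \delta_\eta$. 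For each $j' \in V_{\mathrm{abs}}$ pick continuous sandwich functions $\chi_{j'}^\pm \in C(\Gamma)$ with $\chi_{j'}^\pm(O_{j'}) = 1$, $\chi_{j'}^\pm(O_{j''}) = 0$ for $j'' \neq j'$, and $\chi_{j'}^- \leq \mathbf{1}_{B_\Gamma(O_{j'}, \delta/2)} \leq \chi_{j'}^+ \leq \mathbf{1}_{B_\Gamma(O_{j'}, \delta)}$. Applying Corollary~\ref{coro conv diff} to each of these ($\epsilon$-independent) functions and sandwiching yields
\[
\lim_{\epsilon \to 0} \sup_{x \in \Gamma} \sup_{z: \Pi^\epsilon(z) = x} \abs{\mathbb{P}_z(A_{j'}^\epsilon) - \mu^i(x, O_{j'})} = 0, \qquad A_{j'}^\epsilon \coloneqq \{\Pi^\epsilon(Z^\epsilon(t(\epsilon))) \in B_\Gamma(O_{j'}, \delta/2)\},
\]
and since $\sum_{j' \in V_{\mathrm{abs}}} \mu^i(x, O_{j'}) = 1$, the complementary event $R^\epsilon$ satisfies $\sup_{x,z} \mathbb{P}_z(R^\epsilon) \to 0$.

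Now decompose
\[
\mathbb{E}_z \varphi_\epsilon(Z^\epsilon(t(\epsilon))) = \sum_{j' \in V_{\mathrm{abs}}} \mathbb{E}_z\bigl[\varphi_\epsilon(Z^\epsilon(t(\epsilon))) \mathbf{1}_{A_{j'}^\epsilon}\bigr] + \mathbb{E}_z\bigl[\varphi_\epsilon(Z^\epsilon(t(\epsilon))) \mathbf{1}_{R^\epsilon}\bigr].
\]
Setting $M \coloneqq \sup_\epsilon \abs{\varphi_\epsilon}_{C(\bar{G}_\epsilon)}$, equiboundedness bounds the last term by $M \sup_{x,z} \mathbb{P}_z(R^\epsilon) = o_\epsilon(1)$. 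On $A_{j'}^\epsilon$, the geometric step above gives $\abs{\varphi_\epsilon(Z^\epsilon(t(\epsilon))) - \varphi_\epsilon(O_{j'})} < \eta$, whence each summand equals $\varphi_\epsilon(O_{j'}) \mathbb{P}_z(A_{j'}^\epsilon) + O(\eta \mathbb{P}_z(A_{j'}^\epsilon))$. Substituting the probability estimate from the previous paragraph and using $\mu^i(x, O_{j'}) = 0$ for $j' \notin V_{\mathrm{abs}}$, the left-hand side of the corollary is within $C\eta + o_\epsilon(1)$ of $\sum_{j'=1}^{\abs{V}} \varphi_\epsilon(O_{j'}) \mu^i(x, O_{j'})$, uniformly in $x$ and $z$ with $\Pi^\epsilon(z) = x$. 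Sending $\epsilon \to 0$ and then $\eta \to 0$ concludes.

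The only subtlety is that Corollary~\ref{coro conv diff} is stated for a fixed continuous $F \in C(\Gamma)$ and cannot be fed the $\epsilon$-dependent function $\varphi_\epsilon$ directly; equicontinuity bridges this, because the approximating bump functions $\chi_{j'}^\pm$ depend only on $\Gamma$, $\delta$, and $\delta_\eta$, so only finitely many applications of the corollary are required. The geometric claim that $d_\Gamma(\Pi^\epsilon(z), O_{j'}) \leq \delta/2$ implies $\abs{z - O_{j'}} < \delta_\eta$ is the other place where one must be careful, but it follows directly from the explicit definition of $\Pi^\epsilon$ and the disjointness of the $B_\Gamma(O_{j'}, \delta)$.
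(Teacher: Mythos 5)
Your proof is correct but takes a genuinely different route from the paper. The paper's proof of Corollary~\ref{coro conv pde spec} is a one-line remark: repeat the argument of Theorem~\ref{conv diff} with $\varphi_\epsilon$ in place of $F\circ\Pi^\epsilon$, invoking equicontinuity where the original proof used continuity of $F$ (to control the term $J_{2,1}^\epsilon$ where the process stays trapped near a vertex between $\sigma^{\epsilon,i}$ and $t(\epsilon)$) and equiboundedness where it used $\abs{F}_{C(\Gamma)}$. You instead reduce to the already-proved Corollary~\ref{coro conv diff}: you first extract, by a sandwich with $\epsilon$-independent bump functions $\chi_{j'}^\pm\in C(\Gamma)$, the uniform concentration estimate $\mathbb{P}_z(\Pi^\epsilon(Z^\epsilon(t(\epsilon)))\in B_\Gamma(O_{j'},\delta/2))=\mu^i(x,O_{j'})+o_\epsilon(1)$, verify $\sum_{j'}\mu^i(x,O_{j'})=1$ so the remainder event vanishes, and then use equicontinuity only at the final stage to replace $\varphi_\epsilon$ on each small Euclidean neighborhood by its value at the vertex. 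This is a legitimate and somewhat cleaner reduction, since it avoids re-running the stopping-time decomposition and only requires the output of the earlier corollary plus the elementary geometric fact that $d_\Gamma(\Pi^\epsilon(z),O_{j'})\leq\delta/2$ with $\epsilon$ small forces $\abs{z-O_{j'}}<\delta_\eta$. Both routes hinge on exactly the same two properties of $\{\varphi_\epsilon\}$ (equicontinuity for the near-vertex comparison and equiboundedness for the rare events), so they are of comparable strength; what your version buys is modularity --- only finitely many fixed continuous test functions ever enter Corollary~\ref{coro conv diff}, which you correctly identify as the key point allowing an $\epsilon$-dependent initial condition. The minor details (disjointness of the graph balls for $\delta$ small, the uniformity of the sandwich over $x$ and $z$, and that $O_{j'}\in G_\epsilon$ so equicontinuity applies) all check out.
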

\begin{proof}
	The proof is similar as the one in Theorem \ref{conv diff}. In this case, we use the equicontinuity and equiboundedness of $\varphi_\epsilon$.
\end{proof}

\section{First critical time scale}\label{sec conti}
Throughout this section, we consider the case that there exists only one $O_{j_1} \in V $ such that $r_{j_1}(\epsilon) = r_{(1)}(\epsilon) $ and for all $j \neq j_1$, $r_{(1)}(\epsilon) \ll r_j(\epsilon)$.
We will discuss the asymptotic behavior at time scale 
\begin{equation}
\begin{aligned}
	&t(\epsilon) = s T_\epsilon^1 = s  \frac{V_{d-1} \sum_{k:I_k \sim O_{j_1}} \lambda_k^{d-1}/L_k }{V_d}  \alpha_{j_1}(\epsilon) \frac{\sum_{l:I_l \sim O_{j_1}} \lambda_l^{d-1}}{\sum_{k:I_k \sim O_{j_1}} \lambda_k^{d-1}/L_k } \\
    &\eqqcolon s \kappa_{j_1}(\vec{L}) \alpha_{j_1}(\epsilon) \frac{\sum_{l:I_l \sim O_{j_1}} \lambda_l^{d-1}}{\sum_{k:I_k \sim O_{j_1}} \lambda_k^{d-1}/L_k },
\end{aligned}
\end{equation}
where
\begin{equation}
    T_\epsilon^1 = \frac{r_{j_1}(\epsilon)^d}{\epsilon^{d-1}}, \ \ \ \kappa_{j_1}(\vec{L}) = \frac{V_{d-1} \sum_{k:I_k \sim O_{j_1}} \lambda_k^{d-1}/L_k }{V_d}.
\end{equation}

\subsection{Asymptotic continuous time Markov chain}
Consider the following continuous time Markov chain $Y(t)$ on the state space $\mathcal{S}= \bigcup_{j=1}^{\abs{V}} O_j $ defined on the probability space $(\mathbf{\Omega}, \mathbf{P})$:
first, let the random variables $\tau_n^{O_{j}} $ for $n \in \mathbb{N}$ be i.i.d. with exponential distribution with parameter 
\begin{equation}
  \kappa_{j_1}(\vec{L}) \mathbbm{1}_{\{j \in j_1\}},
\end{equation}
and $\eta_n^{O_{j}} $ take value in $\mathcal{S} \setminus \{O_{j}\}$ are i.i.d. with 
\begin{equation}
\mathbf{P}(\eta_n^{O_{j}} = O_i) =
\begin{dcases}
    p_{j,k}(\vec{L}) = \frac{\lambda_k^{d-1}/L_k}{\sum_{l:I_l \sim O_{j}} \lambda_l^{d-1}/L_l }, & \text{ if } O_i \sim I_k \sim O_{j}, \\
    0, & \text{ otherwise}.
\end{dcases}
\end{equation}
Moreover, $\tau_n^{O_j}$ and $\eta_n^{O_j}$ are independent.
We define $\sigma_n$ with $n \geq 0$ and values in $[0,\infty)$ and $\xi_n$ with $n \geq 0$ and values in $\mathcal{S}$.
Let $\sigma_0 \coloneqq 0$ and $\xi_0 = \xi$.
We define
\begin{equation}
\begin{aligned}
\sigma_n = \sigma_{n-1} + \tau_n^{\xi^{n-1}}, \ \ \ \ \  \xi_n = \eta_n^{\xi_{n-1}}.
\end{aligned}
\end{equation}
Finally, 
\begin{equation}
	Y(t) = \xi_n , \ \ \ \ \text{ for } \sigma_n \leq t <\sigma_{n+1}.
\end{equation}

Due to Lemma \ref{exit place est new} and Theorem \ref{large exit time est}, it is natural to expect that $Z^\epsilon(sT_\epsilon^1)$ behaves like $Y(s)$ when $\epsilon$ is small.
The main difficulty is that $Z^\epsilon(sT_\epsilon^1)$, as a process, takes values in $G_\epsilon$ rather than in $\mathcal{S}$.
The next lemma shows that if the process does not escape to another vertex $j \neq j_1$, then it is very likely to remain in a small neighborhood of $O_{j_1}$.
Together with \eqref{dis exit time upper}, this implies that the process spends most of its time near the vertices, and consequently behaves like $Y(s)$.

Before stating the lemma, we first recall that
\begin{equation*}
\begin{aligned}
    G_{\epsilon,j}(\delta) &\coloneqq  \{ z \in G_\epsilon : d_{\Gamma}(\Pi(z),O_j) \geq \delta \}, \\
    B_{\epsilon,j}(\delta) &\coloneqq  \{ z \in G_\epsilon : d_{\Gamma}(\Pi(z),O_j) \leq \delta \}.
\end{aligned}
\end{equation*}

\begin{Lemma}\label{conti lem}
For any $s > 0$ and $\delta \in \left(0,\frac{1}{4}\min_{k=1}^{\abs{E}} \abs{I_k} \right)$, if $j=j_1$, we have
    \begin{equation}
\begin{aligned}
    & \lim_{\epsilon \to 0} \sup_{z \in C_{\epsilon.j}(r_j(\epsilon)+3\epsilon)} \mathbb{P}_z \left(     Z^\epsilon( sT_\epsilon^1)  \in G_{\epsilon,j}(\delta),  sT_\epsilon^1 \leq \sigma_j^\epsilon(\vec{L}^\epsilon) \right) = 0.
\end{aligned}
\end{equation}
\end{Lemma}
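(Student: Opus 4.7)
The plan is to decompose the interval $[0, sT_\epsilon^1]$ using the cycle stopping times $\tau_n^{\epsilon,\delta}(\vec{L}^\epsilon)$ and $\sigma_n^{\epsilon,\delta}(\vec{L}^\epsilon)$ from Section 3 and to bound the probability that $sT_\epsilon^1$ falls inside an ``outer portion'' $[\tau_n^{\epsilon,\delta}, \sigma_n^{\epsilon,\delta})$ of some cycle. On the event
\begin{equation*}
A := \bigl\{Z^\epsilon(sT_\epsilon^1) \in G_{\epsilon,j}(\delta),\ sT_\epsilon^1 \leq \sigma_j^\epsilon(\vec{L}^\epsilon)\bigr\},
\end{equation*}
there is a unique index $n \geq 1$ with $\tau_n^{\epsilon,\delta} \leq sT_\epsilon^1 < \sigma_n^{\epsilon,\delta} \leq \sigma_j^\epsilon$, so
\begin{equation*}
\mathbb{P}_z(A) \leq \sum_{n \geq 1} \mathbb{P}_z\bigl( \sigma_{n-1}^{\epsilon,\delta} \leq sT_\epsilon^1 \wedge \sigma_j^\epsilon,\ \tau_n^{\epsilon,\delta} \leq sT_\epsilon^1 < \sigma_n^{\epsilon,\delta} \bigr).
\end{equation*}
The strong Markov property at $\sigma_{n-1}^{\epsilon,\delta}$, together with the fact that $Z^\epsilon(\sigma_{n-1}^{\epsilon,\delta}) \in C_{\epsilon,j}(r_j(\epsilon)+3\epsilon)$ on the surviving event, bounds each summand by $\mathbb{P}_z(\sigma_{n-1}^{\epsilon,\delta} \leq sT_\epsilon^1 \wedge \sigma_j^\epsilon) \cdot q_\epsilon$, where
\begin{equation*}
q_\epsilon := \sup_{z' \in C_{\epsilon,j}(r_j(\epsilon)+3\epsilon)}\ \sup_{t \geq 0} \mathbb{P}_{z'}\bigl( \tau_1^{\epsilon,\delta} \leq t < \sigma_1^{\epsilon,\delta} \bigr).
\end{equation*}

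The heart of the proof is the bound $q_\epsilon \lesssim 1/T_\epsilon^1$. Heuristically, a full cycle has expected length $\asymp \alpha_j(\epsilon)\delta$ by Lemma \ref{exit place est}, while its outer portion $\sigma_1^{\epsilon,\delta} - \tau_1^{\epsilon,\delta}$ has expectation $\lesssim \delta$ (a one-dimensional exit-time calculation in the narrow tube), so the fraction of time that a typical cycle spends in the outer region is of order $1/\alpha_j(\epsilon) \asymp 1/T_\epsilon^1$. To upgrade this heuristic to a pointwise bound I would condition on $\tau_1^{\epsilon,\delta}$ and apply the strong Markov property, obtaining
\begin{equation*}
\mathbb{P}_{z'}\bigl( \tau_1^{\epsilon,\delta} \leq t < \sigma_1^{\epsilon,\delta} \bigr) = \int_0^t f_{\tau_1^{\epsilon,\delta}}(u)\, \mathbb{P}\bigl( \sigma_1^{\epsilon,\delta} - \tau_1^{\epsilon,\delta} > t-u \,\big|\, \tau_1^{\epsilon,\delta} = u \bigr)\, du \leq \|f_{\tau_1^{\epsilon,\delta}}\|_\infty \cdot \sup_{z'' \in C_{\epsilon,j}(\delta)} \mathbb{E}_{z''}\bigl(\sigma_1^{\epsilon,\delta} - \tau_1^{\epsilon,\delta}\bigr),
\end{equation*}
where the second factor is $\lesssim \delta$; combining with a density bound $\|f_{\tau_1^{\epsilon,\delta}}\|_\infty \lesssim 1/(\alpha_j(\epsilon)\delta)$ then yields $q_\epsilon \lesssim 1/T_\epsilon^1$.

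To conclude, Wald's identity and Lemma \ref{exit place est} give $\mathbb{E}_z\,\#\{n: \sigma_{n-1}^{\epsilon,\delta} \leq sT_\epsilon^1 \wedge \sigma_j^\epsilon\} \lesssim sT_\epsilon^1/(\alpha_j(\epsilon)\delta) \asymp s/\delta$, so
\begin{equation*}
\mathbb{P}_z(A) \lesssim \frac{s}{\delta \, T_\epsilon^1} \longrightarrow 0 \quad (\epsilon \to 0),
\end{equation*}
uniformly in $z \in C_{\epsilon,j}(r_j(\epsilon)+3\epsilon)$. The main obstacle is the density bound on $\tau_1^{\epsilon,\delta}$. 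I would establish it by adapting the exponential-law proof of Theorem \ref{large exit time est} (built on \cite[Lemma 6.8]{SMP}) to the first-passage time $\tau_1^{\epsilon,\delta}$ itself, yielding that $\tau_1^{\epsilon,\delta}/(\alpha_j(\epsilon)\delta)$ converges in distribution to $\mathrm{Exp}(1)$ and hence has an $L^\infty$ density of the asserted order; if such finer information proves awkward, a suboptimal bound $q_\epsilon \lesssim 1/\sqrt{T_\epsilon^1}$ obtained by splitting the integral at $|t - \tau_1^{\epsilon,\delta}| = \sqrt{\delta/\alpha_j(\epsilon)}$ and using only Markov's inequality together with Lipschitz control of the CDF would still suffice, since $T_\epsilon^1 \to \infty$.
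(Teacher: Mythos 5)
Your cycle decomposition and the strategy ``(per-cycle hit probability) $\times$ (expected number of cycles)'' are a reasonable alternative to the paper's argument, and the cycle-counting half via Wald is sound. However, the argument hinges on the bound $q_\epsilon \lesssim 1/T_\epsilon^1$ (or the weaker $1/\sqrt{T_\epsilon^1}$), and neither your primary derivation nor your fallback actually establishes it: both require a pointwise density bound $\|f_{\tau_1^{\epsilon,\delta}}\|_\infty \lesssim 1/(\alpha_j(\epsilon)\delta)$, or equivalently a Lipschitz modulus for the CDF of $\tau_1^{\epsilon,\delta}$ at a scale $\Delta \ll \alpha_j(\epsilon)\delta$. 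The exponential law (Theorem \ref{large exit time est}) does not give this. It gives $\sup_u |\mathbb{P}_z(\tau_1^{\epsilon,\delta}/(\alpha_j(\epsilon)\delta) \geq u) - e^{-u}| = o_\epsilon(1)$ — a \emph{uniform but unquantified} convergence of CDFs. Convergence of CDFs (with an $o_\epsilon(1)$ error that is not compared to the window width) does not transfer any regularity to the prelimit law: the CDF of $\tau_1^{\epsilon,\delta}$ could in principle oscillate or concentrate on subintervals of length $\ll \alpha_j(\epsilon)\delta$ without violating the exponential law. In particular, for the splitting scale $\Delta = \delta\sqrt{\alpha_j(\epsilon)}$ you propose, the target estimate $\mathbb{P}_z(\tau_1^{\epsilon,\delta} \in [t-\Delta,t]) \lesssim \Delta/(\alpha_j(\epsilon)\delta) \to 0$ is asking for CDF control at a scale that vanishes faster than the known error, so the known error dominates and the argument does not close.

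The paper sidesteps exactly this obstruction. It does \emph{not} try to bound a per-cycle ``hit'' probability uniformly over $t$; instead, for each fixed cycle index $n$, it introduces a \emph{macroscopic} auxiliary parameter $\delta' > 0$ and splits the event $\{\tau_{j,n}^{\epsilon,\delta} \leq sT_\epsilon^1 \leq \sigma_{j,n}^{\epsilon,\delta}\}$ into three subcases: $I_3$ (the outer portion of the cycle is long, $\geq \delta' T_\epsilon^1$, killed by Markov's inequality and the $O(\delta)$ expected outer-portion length), $I_2$ (the previous return happened within $\delta' T_\epsilon^1$ of the observation time, killed because the exponential law makes the exit time $\geq \delta' T_\epsilon^1$ with probability $\to e^{-\delta'\kappa_j(\delta)}$), and $I_1$ (the exit time from the previous return lands in a window of width $\delta' T_\epsilon^1$, bounded using the exponential law by $\delta'\kappa_j(\delta) + o_\epsilon(1)$). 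Because $\delta'$ is fixed (not shrinking with $\epsilon$), the CDF increments are evaluated at macroscopic relative width, so the $o_\epsilon(1)$ error in the exponential law is genuinely absorbed by taking $\epsilon \to 0$ first and $\delta' \to 0$ afterwards. The summability over $n$ is handled separately by the geometric tail bound $(1 - \delta/(2\min_k|I_k|))^{n-1}$. If you want to save your cleaner structure, you would need to prove an actual density bound for the first passage time to $C_{\epsilon,j}(\delta)$, which is a substantive additional estimate not supplied by the paper.
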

\begin{proof}
    Recall that the sequence of stopping times
\begin{equation*}
\begin{aligned}
\sigma_{j,n}^{\epsilon,\delta}(\vec{L}^\epsilon) &\coloneqq \inf  \{t \geq  \tau_{j,n}^{\epsilon,\delta}(\vec{L}^\epsilon) : Z^\epsilon(t) \in  C_{\epsilon,j}(\vec{L}^\epsilon) \cup C_{\epsilon,j}(r_j(\epsilon)+ 3\epsilon ) \}, \\
\tau_{j,n}^{\epsilon,\delta}(\vec{L}^\epsilon) &\coloneqq \inf  \{t \geq  \sigma_{j,n-1}^{\epsilon,\delta}(\vec{L}^\epsilon) : Z^\epsilon(t) \in C_{\epsilon,j}(\delta) \},
\end{aligned}
\end{equation*}
with $\tau_{j,0}^{\epsilon,\delta}(\vec{L}^\epsilon) \coloneqq 0$.
For every $z \in C_{\epsilon.j}(r_j(\epsilon)+3\epsilon)$, we have
\begin{equation}
\begin{aligned}
    &\mathbb{P}_z \left(     Z^\epsilon( sT_\epsilon^1)  \in G_{\epsilon,j}(\delta),  sT_\epsilon^1 \leq \sigma_j^\epsilon(\vec{L}^\epsilon) \right)
    \leq \sum_{n=0}^\infty  \mathbb{P}_z \left( \tau_{j,n}^{\epsilon,\delta}(\vec{L}^\epsilon) \leq sT_\epsilon^1  \leq  \sigma_{j,n}^{\epsilon,\delta}(\vec{L}^\epsilon), sT_\epsilon^1  \leq  \sigma_j^\epsilon(\vec{L}^\epsilon)  \right).
\end{aligned}
\end{equation}
It suffices to show that  
\begin{equation}\label{cont cla}
    \lim_{\epsilon \to 0}  \sum_{n=0}^\infty \sup_{z \in C_{\epsilon.j}(r_j(\epsilon)+3\epsilon)} \mathbb{P}_z \left( \tau_{j,n}^{\epsilon,\delta}(\vec{L}^\epsilon) \leq sT_\epsilon^1  \leq  \sigma_{j,n}^{\epsilon,\delta}(\vec{L}^\epsilon), sT_\epsilon^1  \leq  \sigma_j^\epsilon(\vec{L}^\epsilon)  \right) = 0.
\end{equation}

First, for all $n \geq 2$, by the strong Markov property,
\begin{equation}
\begin{aligned}
    & \sup_{z \in C_{\epsilon.j}(r_j(\epsilon)+3\epsilon)} \mathbb{P}_z \left( \tau_{j,n}^{\epsilon,\delta}(\vec{L}^\epsilon) \leq sT_\epsilon^1   \leq  \sigma_{j,n}^{\epsilon,\delta}(\vec{L}^\epsilon), sT_\epsilon^1  \leq  \sigma_j^\epsilon(\vec{L}^\epsilon)  \right) \\
    & \leq \sup_{z \in C_{\epsilon.j}(r_j(\epsilon)+3\epsilon)} \mathbb{P}_z \left( \tau_{j,n-1}^{\epsilon,\delta}(\vec{L}^\epsilon) \leq \sigma_{j,n-1}^{\epsilon,\delta}(\vec{L}^\epsilon) <  \sigma_j^\epsilon(\vec{L}^\epsilon)  \right) \\
    & \leq \sup_{z \in C_{\epsilon.j}(r_j(\epsilon)+3\epsilon)} \mathbb{E}_z \left(  \mathbbm{1}_{ \{ \tau_{j,n-2}^{\epsilon,\delta}(\vec{L}^\epsilon) \leq \sigma_{j,n-2}^{\epsilon,\delta}(\vec{L}^\epsilon) <  \sigma_j^\epsilon(\vec{L}^\epsilon) \} } 
    \mathbbm{1}_{ \{ \tau_{j,n-1}^{\epsilon,\delta}(\vec{L}^\epsilon) \leq \sigma_{j,n-1}^{\epsilon,\delta}(\vec{L}^\epsilon) <  \sigma_j^\epsilon(\vec{L}^\epsilon) \} }\right) \\
    & \leq \sup_{z' \in C_{\epsilon,j}(\delta) } \mathbb{P}_{z'} \left( \sigma_{j,0}^{\epsilon,\delta }(\vec{L}^\epsilon) < \si_j^\epsilon(\vec{L}^\epsilon) \right) 
    \sup_{z \in C_{\epsilon.j}(r_j(\epsilon)+3\epsilon)} \mathbb{E}_z \left(  \mathbbm{1}_{ \{ \tau_{j,n-2}^{\epsilon,\delta}(\vec{L}^\epsilon) \leq \sigma_{j,n-2}^{\epsilon,\delta}(\vec{L}^\epsilon) <  \sigma_j^\epsilon(\vec{L}^\epsilon) \} }  \right)\\
     &\leq \left( \sup_{z' \in C_{\epsilon,j}(\delta) } \mathbb{P}_{z'} \left( \sigma_{j,0}^{\epsilon,\delta }(\vec{L}^\epsilon) < \si_j^\epsilon(\vec{L}^\epsilon) \right)  \right)^{n-1}
     \leq \left(1-\frac{\delta}{2\min_{k=1}^{\abs{E}} \abs{I_k}} \right)^{n-1}.
\end{aligned}
\end{equation}
As a result, for all $\eta > 0$ and there exists $N_\eta >0$,
\begin{equation}\label{cont lim 1}
\begin{aligned}
    &\sum_{n=N_\eta+1}^\infty \sup_{z \in C_{\epsilon.j}(r_j(\epsilon)+3\epsilon)}  \mathbb{P}_z \left( \tau_{j,n}^{\epsilon,\delta}(\vec{L}^\epsilon) \leq sT_\epsilon^1  \leq  \sigma_{j,n}^{\epsilon,\delta}(\vec{L}^\epsilon), sT_\epsilon^1  \leq  \sigma_j^\epsilon(\vec{L}^\epsilon)  \right) \\
    &\leq \sum_{n=N_\eta+1}^\infty \left(1-\frac{\delta}{2\min_{k=1}^{\abs{E}} \abs{I_k}}  \right)^{n-1}
    \leq \frac{2\min_{k=1}^{\abs{E}} \abs{I_k}}{\delta} \left(1-\frac{\delta}{2\min_{k=1}^{\abs{E}} \abs{I_k}} \right)^{N_\eta} <\eta.
\end{aligned}
\end{equation}

Next, for every $n \geq 1$ and $\delta' >0$,
\begin{equation}
\begin{aligned}
    & \mathbb{P}_z \left( \tau_{j,n}^{\epsilon,\delta}(\vec{L}^\epsilon) \leq sT_\epsilon^1  \leq  \sigma_{j,n}^{\epsilon,\delta}(\vec{L}^\epsilon), sT_\epsilon^1  \leq  \sigma_j^\epsilon(\vec{L}^\epsilon)  \right) \\
    & \leq \mathbb{P}_z \left( \sigma_{j,n-1}^{\epsilon,\delta}(\vec{L}^\epsilon) \leq (s-\delta')T_\epsilon^1  \leq  \tau_{j,n}^{\epsilon,\delta}(\vec{L}^\epsilon) \leq sT_\epsilon^1  \leq  \sigma_{j,n}^{\epsilon,\delta}(\vec{L}^\epsilon), sT_\epsilon^1  \leq  \sigma_j^\epsilon(\vec{L}^\epsilon)  \right)  \\
    & \ \ \ + \mathbb{P}_z \left( (s-\delta')T_\epsilon^1 \leq  \sigma_{j,n-1}^{\epsilon,\delta}(\vec{L}^\epsilon) \leq  \tau_{j,n}^{\epsilon,\delta}(\vec{L}^\epsilon) \leq sT_\epsilon^1  \leq  \sigma_{j,n}^{\epsilon,\delta}(\vec{L}^\epsilon), sT_\epsilon^1  \leq  \sigma_j^\epsilon(\vec{L}^\epsilon)  \right)  \\
    & \ \ \ \ \ + \mathbb{P}_z \left( \tau_{j,n}^{\epsilon,\delta}(\vec{L}^\epsilon) \leq (s-\delta')T_\epsilon^1 \leq sT_\epsilon^1  \leq  \sigma_{j,n}^{\epsilon,\delta}(\vec{L}^\epsilon), sT_\epsilon^1  \leq  \sigma_j^\epsilon(\vec{L}^\epsilon)  \right) \\
    & \eqqcolon I_{1}^{\epsilon,\delta'}(z) + I_{2}^{\epsilon,\delta'}(z) + I_{3}^{\epsilon,\delta'}(z).
\end{aligned}
\end{equation}

For $I_{1}^{\epsilon,\delta'}$, first note that
\begin{equation}
\begin{aligned}
    &I_{1}^{\epsilon,\delta'}(z) \leq \mathbb{E}_z \Big( \mathbbm{1}_{ \{ \sigma_{j,n-1}^{\epsilon,\delta}(\vec{L}^\epsilon) \leq (s-\delta')T_\epsilon^1   \} } \mathbbm{1}_{ \{ \sigma_{j,n-1}^{\epsilon,\delta}(\vec{L}^\epsilon) <  \sigma_j^\epsilon(\vec{L}^\epsilon) \} } \\
    & \ \ \ \ \ \ \ \ \ \ \ \ \ \ \ \times \mathbb{P}_{Z^\epsilon(\sigma_{j,n-1}^{\epsilon,\delta}(\vec{L}^\epsilon))} \left( \sigma_{j}^{\epsilon}(\delta) \in [(s-\delta')T_\epsilon^1-\sigma_{j,n-1}^{\epsilon,\delta}(\vec{L}^\epsilon), sT_\epsilon^1-\sigma_{j,n-1}^{\epsilon,\delta}(\vec{L}^\epsilon)] \right) \Big).
\end{aligned}
\end{equation}
Under the event $\{ \sigma_{j,n-1}^{\epsilon,\delta}(\vec{L}^\epsilon) \leq (s-\delta')T_\epsilon^1  \} \cap \{ \sigma_{j,n-1}^{\epsilon,\delta}(\vec{L}^\epsilon)  <  \sigma_j^\epsilon(\vec{L}^\epsilon) \}$, we have
\begin{equation}
    \begin{aligned}
        &\mathbb{P}_{Z^\epsilon(\sigma_{j,n-1}^{\epsilon,\delta}(\vec{L}^\epsilon))} \left( \sigma_{j}^{\epsilon}(\delta) \in [(s-\delta')T_\epsilon^1-\sigma_{j,n-1}^{\epsilon,\delta}(\vec{L}^\epsilon), sT_\epsilon^1-\sigma_{j,n-1}^{\epsilon,\delta}(\vec{L}^\epsilon)] \right) \\
        &\leq \sup_{z \in C_{\epsilon,j}(r_{j}(\epsilon)+3\epsilon)}  \mathbb{P}_z \left( \sigma_{j}^{\epsilon}(\delta) \in [(s-\delta')T_\epsilon^1-\sigma_{j,n-1}^{\epsilon,\delta}(\vec{L}^\epsilon), sT_\epsilon^1-\sigma_{j,n-1}^{\epsilon,\delta}(\vec{L}^\epsilon)] \right) \\
        &= \sup_{z \in C_{\epsilon,j}(r_{j}(\epsilon)+3\epsilon)} \mathbb{P}_z \left( \sigma_{j}^{\epsilon}(\delta) \geq (s-\delta')T_\epsilon^1-\sigma_{j,n-1}^{\epsilon,\delta}(\vec{L}^\epsilon) \right)
        -\mathbb{P}_z \left( \sigma_{j}^{\epsilon}(\delta) \geq sT_\epsilon^1-\sigma_{j,n-1}^{\epsilon,\delta}(\vec{L}^\epsilon) \right) .
    \end{aligned}
\end{equation}
Thanks to \eqref{exp est 1}, for every $\eta >0$ and $\delta' > 0$, there exists $\epsilon_{1,\eta,\delta'} > 0$ such that for every $\epsilon \in (0, \epsilon_{1,\eta,\delta'})$
\begin{equation}
\begin{aligned}
    &\sup_{z \in C_{\epsilon,j}(r_{j}(\epsilon)+3\epsilon)} \mathbb{P}_z \left( \sigma_{j}^{\epsilon}(\delta) \geq (s-\delta')T_\epsilon^1-\sigma_{j,n-1}^{\epsilon,\delta}(\vec{L}^\epsilon) \right)
        -\mathbb{P}_z \left( \sigma_{j}^{\epsilon}(\delta) \geq sT_\epsilon^1-\sigma_{j,n-1}^{\epsilon,\delta}(\vec{L}^\epsilon) \right)  \\
        &\leq e^{-(s-\delta'-\sigma_{j,n-1}^{\epsilon,\delta}/T_\epsilon^1)\kappa_{j}(\delta)}+\frac{\eta}{9}
        -e^{-(s-\sigma_{j,n-1}^{\epsilon,\delta}/T_\epsilon^1)\kappa_{j}(\delta)}+\frac{\eta}{9} \\
        &\leq \delta' \kappa_{j}(\delta) + \frac{2\eta}{9}.
\end{aligned}
\end{equation}
Consequently, for every $\eta > 0$ there exists $ \delta'_{1,\eta} >0$ such that for all $\delta' \in (0,\delta'_{1,\eta})$, there exists $\epsilon_{1,\eta,\delta'}>0$ such that 
\begin{equation}\label{conti est 1}
\begin{aligned}
    & \sup_{z \in C_{\epsilon,j}(r_j(\epsilon)+3\epsilon)} I_{1}^{\epsilon,\delta'}(z) \leq \sup_{z \in C_{\epsilon,j}(r_j(\epsilon)+3\epsilon)} \mathbb{E}_z \Big( \mathbbm{1}_{ \{ \sigma_{j,n-1}^{\epsilon,\delta}(\vec{L}^\epsilon) \leq (s-\delta')T_\epsilon^1   \} } \mathbbm{1}_{ \{ \sigma_{j,n-1}^{\epsilon,\delta}(\vec{L}^\epsilon) <  \sigma_j^\epsilon(\vec{L}^\epsilon) \} } \\
    & \ \ \ \ \ \ \ \ \ \ \ \ \ \ \ \times \mathbb{P}_{Z^\epsilon(\sigma_{j,n-1}^{\epsilon,\delta}(\vec{L}^\epsilon))} \left( \sigma_{j}^{\epsilon}(\delta) \in [(s-\delta')T_\epsilon^1-\sigma_{j,n-1}^{\epsilon,\delta}(\vec{L}^\epsilon), sT_\epsilon^1-\sigma_{j,n-1}^{\epsilon,\delta}(\vec{L}^\epsilon)] \right) \Big) \leq \frac{\eta}{3},
\end{aligned}
\end{equation}
for every $\epsilon \in (0,\epsilon_{1,\eta,\delta'})$.

For $I_{2}^{\epsilon,\delta'}$, by the strong Markov property,
\begin{equation}
\begin{aligned}
    &\mathbb{P}_z \left( (s-\delta')T_\epsilon^1 \leq  \sigma_{j,n-1}^{\epsilon,\delta}(\vec{L}^\epsilon) \leq  \tau_{j,n}^{\epsilon,\delta}(\vec{L}^\epsilon) \leq sT_\epsilon^1  \leq  \sigma_{j,n}^{\epsilon,\delta}(\vec{L}^\epsilon), sT_\epsilon^1  \leq  \sigma_j^\epsilon(\vec{L}^\epsilon)  \right) \\
    & \leq \mathbb{E}_z \left( \mathbbm{1}_{ \{ \sigma_{j,n-1}^{\epsilon,\delta}(\vec{L}^\epsilon) < \sigma_j^\epsilon(\vec{L}^\epsilon) \} } \mathbb{P}_{Z^\epsilon ( \sigma_{j,n-1}^{\epsilon,\delta}(\vec{L}^\epsilon) )} \left( \sigma_j^\epsilon(\delta) <\delta'T_\epsilon^1 \right) \right) \\
    & \leq \sup_{z' \in C_{\epsilon,j}(r_j(\epsilon)+3\epsilon)} \mathbb{P}_{z'} \left( \sigma_j^\epsilon(\delta) <\delta'T_\epsilon^1 \right) 
    = 1-\inf_{z' \in C_{\epsilon,j}(r_j(\epsilon)+3\epsilon)} \mathbb{P}_{z'} \left( \sigma_j^\epsilon(\delta) \geq \delta'T_\epsilon^1 \right) .
\end{aligned}
\end{equation}
Due to \eqref{exp est 1}, for every $\eta > 0$ there exists $ \delta'_{2,\eta} \in (0,\delta'_{1,\eta})$ such that for all $\delta' \in (0,\delta'_{2,\eta})$, there exists $\epsilon_{2,\eta,\delta'} \in (0,\epsilon_{2,\eta,\delta'})$ such that 
\begin{equation}\label{conti est 2}
\begin{aligned}
    \sup_{z \in C_{\epsilon,j}(r_j(\epsilon)+3\epsilon)} I_{2}^{\epsilon,\delta'}(z) \leq 1-\inf_{z' \in C_{\epsilon,j}(r_j(\epsilon)+3\epsilon)} \mathbb{P}_{z'} \left( \sigma_j^\epsilon(\delta) \geq \delta'T_\epsilon^1 \right)  \leq 1-e^{-\delta' \kappa_{j}(\delta)}+\frac{\eta}{6}\leq \frac{\eta}{3},
\end{aligned}
\end{equation}
for every $\epsilon \in (0,\epsilon_{2,\eta,\delta'})$.

For $I_{3}^{\epsilon,\delta'}$, by the strong Markov property,
\begin{equation}
\begin{aligned}
    &\mathbb{P}_z \left( \tau_{j,n}^{\epsilon,\delta}(\vec{L}^\epsilon) \leq (s-\delta')T_\epsilon^1 \leq sT_\epsilon^1  \leq  \sigma_{j,n}^{\epsilon,\delta}(\vec{L}^\epsilon), sT_\epsilon^1  \leq  \sigma_j^\epsilon(\vec{L}^\epsilon)  \right) \\
    &\leq \mathbb{E}_z \left( \mathbbm{1}_{ \{ \tau_{j,n}^{\epsilon,\delta}(\vec{L}^\epsilon)  < \sigma_j^\epsilon(\vec{L}^\epsilon) \} } \mathbb{P}_{Z^\epsilon(\tau_{j,n}^{\epsilon,\delta}(\vec{L}^\epsilon))} \left( \sigma_{j,0}^{\epsilon,\delta}(\vec{L}^\epsilon) >\delta' T_\epsilon^1\right) \right) 
    \leq \sup_{z' \in C_{\epsilon,j}(\delta)} \frac{\mathbb{E}_{z'} \sigma_{j,0}^{\epsilon,\delta}(\vec{L}^\epsilon) }{\delta' T_\epsilon^1} \leq c \frac{\delta}{\delta' T_\epsilon^1}.
\end{aligned}
\end{equation}
As a result, for every $\eta >0$ and $\delta' >0$, there exists $\epsilon_{3,\eta,\delta'} \in (0,\epsilon_{2,\eta,\delta'})$ such that for every $\epsilon \in (0,\epsilon_{3,\eta,\delta'})$, we have
\begin{equation}\label{conti est 3}
    \sup_{z \in C_{\epsilon,j}(r_j(\epsilon)+3\epsilon)} I_{3}^{\epsilon,\delta'}(z) < \frac{\eta}{3}.
\end{equation}

Combining \eqref{conti est 1}, \eqref{conti est 2}, \eqref{conti est 3}, we have for every $n \geq 1$,
\begin{equation}\label{cont lim 2}
    \lim_{\epsilon \to 0} \sup_{z \in C_{\epsilon.j}(r_j(\epsilon)+3\epsilon)}  \mathbb{P}_z \left( \tau_{j,n}^{\epsilon,\delta}(\vec{L}^\epsilon) \leq sT_\epsilon^1  \leq  \sigma_{j,n}^{\epsilon,\delta}(\vec{L}^\epsilon), sT_\epsilon^1  \leq  \sigma_j^\epsilon(\vec{L}^\epsilon)  \right) = 0.
\end{equation}
Moreover, thanks to \eqref{cont lim 1} and \eqref{cont lim 2}, we can conclude that
\begin{equation}
\begin{aligned}
    &\lim_{\epsilon \to 0} \sum_{n=0}^\infty \sup_{z \in C_{\epsilon.j}(r_j(\epsilon)+3\epsilon)}  \mathbb{P}_z \left( \tau_{j,n}^{\epsilon,\delta}(\vec{L}^\epsilon) \leq sT_\epsilon^1  \leq  \sigma_{j,n}^{\epsilon,\delta}(\vec{L}^\epsilon), sT_\epsilon^1  \leq  \sigma_j^\epsilon(\vec{L}^\epsilon)  \right) \\
    &= \lim_{\epsilon \to 0} \sum_{n=0}^{N_\eta} \sup_{z \in C_{\epsilon.j}(r_j(\epsilon)+3\epsilon)}  \mathbb{P}_z \left( \tau_{j,n}^{\epsilon,\delta}(\vec{L}^\epsilon) \leq sT_\epsilon^1  \leq  \sigma_{j,n}^{\epsilon,\delta}(\vec{L}^\epsilon), sT_\epsilon^1  \leq  \sigma_j^\epsilon(\vec{L}^\epsilon)  \right)
    + \eta < \eta.
\end{aligned}
\end{equation}
Since $\eta > 0$ is arbitrary, we have \eqref{cont cla}.

\end{proof}

We have the following theorem
\begin{Theorem}
	For any $s > 0$ and any continuous function $F \in C(\Gamma)$, we have
	\begin{equation}
		\lim_{\epsilon \to 0} \sup_{z \in C_{\epsilon,j}( r_j(\epsilon)+3\epsilon)} \abs{ \mathbb{E}_z F(\Pi^\epsilon(Z^\epsilon(sT_\epsilon^1))) - \mathbf{E}_{O_j} F(Y(s)) } = 0.
	\end{equation}
\end{Theorem}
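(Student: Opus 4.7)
The proof splits into two cases depending on whether $j=j_1$ or not, with the right-hand side $\mathbf{E}_{O_j} F(Y(s))$ computed explicitly in each case. When $j \neq j_1$, the chain $Y$ starts at an absorbing state (rate $\kappa_{j_1}(\vec{L}) \mathbbm{1}_{\{j=j_1\}}=0$), so $\mathbf{E}_{O_j} F(Y(s)) = F(O_j)$. When $j = j_1$, a direct calculation using that $\xi_1$ lands in an absorbing state gives
\begin{equation*}
\mathbf{E}_{O_{j_1}} F(Y(s)) = e^{-s \kappa_{j_1}(\vec{L})} F(O_{j_1}) + \bigl(1-e^{-s \kappa_{j_1}(\vec{L})}\bigr) \sum_{k: I_k \sim O_{j_1}} p_{j_1,k}(\vec{L}) \, F(O_{j'(k)}),
\end{equation*}
where $j'(k)$ denotes the endpoint of $I_k$ opposite $O_{j_1}$.

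For the case $j \neq j_1$, since $r_j(\epsilon) \gg r_{(1)}(\epsilon)$ the exit-time scale $\alpha_j(\epsilon)\frac{\sum_l \lambda_l^{d-1}}{\sum_k \lambda_k^{d-1}/L_k}$ is much larger than $sT_\epsilon^1$, and hence by \eqref{exp est 1} one has $\mathbb{P}_z(\sigma_j^\epsilon(\vec{L}^\epsilon) > sT_\epsilon^1) \to 1$ uniformly on $C_{\epsilon,j}(r_j(\epsilon)+3\epsilon)$. Moreover, the excursion argument in the proof of Lemma \ref{conti lem} only relies on the strong Markov property, \eqref{exp est 1}, and the random-walk estimate on each edge, so it applies verbatim to any vertex to yield
\begin{equation*}
\lim_{\epsilon \to 0} \sup_{z \in C_{\epsilon,j}(r_j(\epsilon)+3\epsilon)} \mathbb{P}_z \bigl( Z^\epsilon(sT_\epsilon^1) \in G_{\epsilon,j}(\delta),\; sT_\epsilon^1 \leq \sigma_j^\epsilon(\vec{L}^\epsilon) \bigr) = 0
\end{equation*}
for every $\delta>0$. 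Combined with the continuity of $F$ and the fact that $\Pi^\epsilon(B_{\epsilon,j}(\delta))$ is contained in a $d_\Gamma$-ball of radius $O(\delta)$ around $O_j$, this gives $\mathbb{E}_z F(\Pi^\epsilon(Z^\epsilon(sT_\epsilon^1))) \to F(O_j)$.

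For the case $j = j_1$, decompose
\begin{equation*}
\mathbb{E}_z F(\Pi^\epsilon(Z^\epsilon(sT_\epsilon^1))) = I_1^\epsilon + I_2^\epsilon,
\end{equation*}
according to whether $\{\sigma_{j_1}^\epsilon(\vec{L}^\epsilon) > sT_\epsilon^1\}$ holds. Lemma \ref{conti lem} together with \eqref{exp est 1} gives $I_1^\epsilon \to e^{-s\kappa_{j_1}(\vec{L})} F(O_{j_1})$ after letting $\delta \to 0$. For $I_2^\epsilon$, apply the strong Markov property at $\sigma_{j_1}^\epsilon(\vec{L}^\epsilon)$: on the event $\{Z^\epsilon(\sigma_{j_1}^\epsilon(\vec{L}^\epsilon)) \in C_{\epsilon,j_1}^k(L_{j_1,k}^\epsilon)\}$, the process restarts from a point in $C_{\epsilon,j'(k)}(r_{j'(k)}(\epsilon)+3\epsilon)$ with a remaining horizon $sT_\epsilon^1 - \sigma_{j_1}^\epsilon(\vec{L}^\epsilon) \in [0,sT_\epsilon^1]$, which is still $o(T_\epsilon^{i(j'(k))})$ because $j'(k) \neq j_1$. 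By the analog of Case 1 (applied uniformly in the residual time and the restart point), the conditional expectation converges to $F(O_{j'(k)})$. Using the asymptotic independence of $\sigma_{j_1}^\epsilon(\vec{L}^\epsilon)$ and the exit edge furnished by Theorem \ref{large exit time est} (or equivalently combining \eqref{exp est 2} with Lemma \ref{exit place est new}),
\begin{equation*}
I_2^\epsilon \to \bigl(1-e^{-s\kappa_{j_1}(\vec{L})}\bigr) \sum_{k:I_k \sim O_{j_1}} p_{j_1,k}(\vec{L}) \, F(O_{j'(k)}),
\end{equation*}
so $I_1^\epsilon + I_2^\epsilon$ recovers $\mathbf{E}_{O_{j_1}} F(Y(s))$.

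\textbf{Main obstacle.} The delicate step is to promote the conclusion of Case 1 to hold uniformly in the residual horizon $u \in [0,sT_\epsilon^1]$ and in the starting point $w \in C_{\epsilon,j'(k)}(r_{j'(k)}(\epsilon)+3\epsilon)$, so that it can be plugged into the strong-Markov decomposition of $I_2^\epsilon$ without leaving an error depending on the random exit time. Concretely one revisits the excursion bound of Lemma \ref{conti lem} applied at the vertex $O_{j'(k)}$ and checks that the geometric tail and the per-excursion estimate degrade continuously in the horizon; the uniformity is expected since $sT_\epsilon^1/T_\epsilon^{i(j'(k))} \to 0$, but must be tracked carefully. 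A secondary technical point is the passage from independence of the exit time and the exit edge (encoded in \eqref{exp est 2}) to the joint convergence of $\mathbb{P}_z(Z^\epsilon(\sigma_{j_1}^\epsilon(\vec{L}^\epsilon)) \in C_{\epsilon,j_1}^k(L_{j_1,k}^\epsilon),\,\sigma_{j_1}^\epsilon(\vec{L}^\epsilon) \leq sT_\epsilon^1)$ needed above.
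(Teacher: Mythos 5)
Your proposal is correct and follows essentially the same route as the paper: split on whether the process has exited before time $sT_\epsilon^1$, use Lemma~\ref{conti lem} together with \eqref{exp est 1} for the non-exit part, and use the strong Markov property at $\sigma_{j_1}^\epsilon(\vec{L}^\epsilon)$ together with \eqref{exp est 2} and Lemma~\ref{exit place est new} for the exit part. The explicit computation of $\mathbf{E}_{O_j}F(Y(s))$ is a helpful addition but the matching of terms is the same.

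Two small remarks on where you made things slightly harder than necessary. First, for $j\neq j_1$ the paper just invokes \eqref{dis exit time upper}: since $r_j(\epsilon)\gg r_{(1)}(\epsilon)$ and $sT_\epsilon^1\ll T_\epsilon^2$, the probability $\mathbb{P}_z(\sigma_j^\epsilon(\delta)\le sT_\epsilon^1)$ vanishes uniformly, and starting from $C_{\epsilon,j}(r_j(\epsilon)+3\epsilon)\subset B_{\epsilon,j}(\delta)$ (for $\epsilon$ small), the event $\{\sigma_j^\epsilon(\delta)>sT_\epsilon^1\}$ already forces $Z^\epsilon(sT_\epsilon^1)\in B_{\epsilon,j}(\delta)$ by path continuity, with no need to re-run the excursion argument of Lemma~\ref{conti lem}. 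Second, the "delicate" uniformity in the residual horizon $u=sT_\epsilon^1-\sigma_{j_1}^\epsilon(\vec{L}^\epsilon)$ that you flag in your main-obstacle paragraph is handled by a simple monotone bound in the paper: since $u\le sT_\epsilon^1$, one has $\{\sigma_{j'}^\epsilon(\delta)\le u\}\subset\{\sigma_{j'}^\epsilon(\delta)\le sT_\epsilon^1\}$, so the conditional error is bounded uniformly by $\sup_{z'}\mathbb{P}_{z'}(\sigma_{j'}^\epsilon(\delta)\le sT_\epsilon^1)$, which vanishes by \eqref{dis exit time upper}, plus a continuity-of-$F$ term that is $o_{\epsilon,\delta}(1)$. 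No uniform-in-$u$ version of Case~1 is actually needed.
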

\begin{proof}
First, by \eqref{dis exit time upper} and the definition of $Y(s)$, it suffices to consider the case $j=j_1$.
We have
\begin{equation}
\begin{aligned}
    &  \mathbb{E}_z F(\Pi^\epsilon(Z^\epsilon(sT_\epsilon^1))) 
     =   \mathbb{E}_z F(\Pi^\epsilon(Z^\epsilon(sT_\epsilon^1)))  \mathbbm{1}_{ \{ \sigma_j^\epsilon(\vec{L}^\epsilon) \geq sT_\epsilon^1 \} }  +   \mathbb{E}_z F(\Pi^\epsilon(Z^\epsilon(sT_\epsilon^1)))  \mathbbm{1}_{ \{ \sigma_j^\epsilon(\vec{L}^\epsilon) < sT_\epsilon^1 \} } \\
    &\eqqcolon I_1^\epsilon(z) + I_2^\epsilon(z).
\end{aligned}
\end{equation}
For $I_2^\epsilon$,
\begin{equation}
\begin{aligned}
    &\mathbb{E}_z F(\Pi^\epsilon(Z^\epsilon(sT_\epsilon^1)))  \mathbbm{1}_{ \{ \sigma_j^\epsilon(\vec{L}^\epsilon) < sT_\epsilon^1 \} } 
    = \mathbb{E}_z \left[ \mathbbm{1}_{ \{ \sigma_j^\epsilon(\vec{L}^\epsilon) < sT_\epsilon^1 \} } \mathbb{E}_{\sigma_j^\epsilon(\vec{L}^\epsilon)} F(\Pi^\epsilon(Z^\epsilon(sT_\epsilon^1- \sigma_j^\epsilon(\vec{L}^\epsilon))))  
    \right] \\
    & =  \mathbb{E}_z \left[ \mathbbm{1}_{ \{ \sigma_j^\epsilon(\vec{L}^\epsilon) < sT_\epsilon^1 \} } F(\Pi^\epsilon(Z^\epsilon(\sigma_j^\epsilon(\vec{L}^\epsilon))))  
    \right] + J_{2,1}^\epsilon(z), 
\end{aligned}
\end{equation}
where
\begin{equation}
\begin{aligned}
    J_{2,1}^\epsilon(z) &\coloneqq \mathbb{E}_z \left[ \mathbbm{1}_{ \{ \sigma_j^\epsilon(\vec{L}^\epsilon) < sT_\epsilon^1 \} } \mathbb{E}_{\sigma_j^\epsilon(\vec{L}^\epsilon)} \left( F(\Pi^\epsilon(Z^\epsilon(sT_\epsilon^1- \sigma_j^\epsilon(\vec{L}^\epsilon))))  
    -F(0) \right)
    \right] .
\end{aligned}
\end{equation}
Note that, under the events $\{ \sigma_j^\epsilon(\vec{L}^\epsilon) < sT_\epsilon^1 \}$ and $\{ Z^\epsilon(\sigma_j^\epsilon(\vec{L}^\epsilon)) \in C_{\epsilon,j'}(r_{j'}(\epsilon)+3\epsilon )\}$ for some $j' \neq j_1$, we have
\begin{equation}
\begin{aligned}
    &\abs{\mathbb{E}_{\sigma_j^\epsilon(\vec{L}^\epsilon)}  \left( F(\Pi^\epsilon(Z^\epsilon(sT_\epsilon^1- \sigma_j^\epsilon(\vec{L}^\epsilon))))  
    -F(0) \right) } \\
    &= \abs{\mathbb{E}_{\sigma_j^\epsilon(\vec{L}^\epsilon)}  \left( F(\Pi^\epsilon(Z^\epsilon(sT_\epsilon^1- \sigma_j^\epsilon(\vec{L}^\epsilon))))  
    -F(0) \right) \mathbbm{1}_{ \{ \sigma_{j'}^\epsilon(\delta) \leq sT_\epsilon^1 \} }  } \\
    & \ \ \ + \abs{\mathbb{E}_{\sigma_j^\epsilon(\vec{L}^\epsilon)}  \left( F(\Pi^\epsilon(Z^\epsilon(sT_\epsilon^1- \sigma_j^\epsilon(\vec{L}^\epsilon))))  
    -F(0) \right) \mathbbm{1}_{ \{ \sigma_{j'}^\epsilon(\delta) > sT_\epsilon^1 \} } } \\
    & \leq 2\abs{F}_{C(\Gamma)} \max_{j' \neq j_1} \sup_{z \in C_{\epsilon,j'}(r_{j'}(\epsilon)+3\epsilon)} \mathbb{P}_z \left(  \sigma_{j'}^\epsilon(\delta) \leq sT_\epsilon^1 \right) +o_{\epsilon,\delta}(1) = o(1)+ o_{\epsilon,\delta}(1).
\end{aligned}
\end{equation}
The $o(1)$ term is due to the fact that since all $j' \neq j_1$, we have $r_j(\epsilon) \gg r_{(1)}(\epsilon)$, the result follows from \eqref{dis exit time upper}.
The $o_{\epsilon,\delta}(1)$ is by the continuity of $F$.
We then conclude that
\begin{equation}
   \sup_{z \in C_{\epsilon,j}( r_{j}(\epsilon)+3\epsilon)} J_{2,1}^\epsilon (z)
   \leq 2\abs{F}_{C(\Gamma)} \max_{j' \neq j_1} \sup_{z \in C_{\epsilon,j'}(r_{j'}(\epsilon)+3\epsilon)} \mathbb{P}_z \left(  \sigma_{j'}^\epsilon(\delta) \leq sT_\epsilon^1 \right) +o_{\epsilon,\delta}(1) = o_{\epsilon,\delta}(1) .
\end{equation}

Now for $I_1^\epsilon$, 
\begin{equation}
\begin{aligned}
    &\mathbb{E}_z F(\Pi^\epsilon(Z^\epsilon(sT_\epsilon^1)))  \mathbbm{1}_{ \{ \sigma_j^\epsilon(\vec{L}^\epsilon) \geq sT_\epsilon^1 \} } \\
    &=  \mathbb{E}_z F(\Pi^\epsilon(Z^\epsilon(sT_\epsilon^1)))  \mathbbm{1}_{ \{  \sigma_j^\epsilon(\vec{L}^\epsilon) \geq sT_\epsilon^1   \} }  \mathbbm{1}_{ \{ Z^\epsilon( sT_\epsilon^1)  \in B_{\epsilon,j}(\delta)  \} } \\
    & \ \ \ +  \mathbb{E}_z F(\Pi^\epsilon(Z^\epsilon(sT_\epsilon^1)))  \mathbbm{1}_{ \{  \sigma_j^\epsilon(\vec{L}^\epsilon) \geq sT_\epsilon^1   \} }  \mathbbm{1}_{ \{ Z^\epsilon( sT_\epsilon^1)  \in G_{\epsilon,j}(\delta)  \} } .
\end{aligned}
\end{equation}
Due to Lemma \ref{conti lem},
\begin{equation}
\begin{aligned}
    &\sup_{z \in C_{\epsilon,j}( r_{j}(\epsilon)+3\epsilon)}  \abs{\mathbb{E}_z F(\Pi^\epsilon(Z^\epsilon(sT_\epsilon^1)))  \mathbbm{1}_{ \{  \sigma_j^\epsilon(\vec{L}^\epsilon) \geq sT_\epsilon^1   \} }  \mathbbm{1}_{ \{ Z^\epsilon( sT_\epsilon^1)  \in G_{\epsilon,j}(\delta)  \} }} \\
    &\leq  \abs{F}_{C(\Gamma)} \sup_{z \in C_{\epsilon,j}( r_{j}(\epsilon)+3\epsilon)}   \mathbb{P}_z \left( Z^\epsilon( sT_\epsilon^1)  \in G_{\epsilon,j}(\delta), \sigma_j^\epsilon(\vec{L}^\epsilon) \geq sT_\epsilon^1 \right)
    =o(1).
\end{aligned}
\end{equation}

Finally,
\begin{equation}
\begin{aligned}
    &\sup_{z \in C_{\epsilon,j}( r_j(\epsilon)+3\epsilon)} \abs{ \mathbb{E}_z F(\Pi^\epsilon(Z^\epsilon(sT_\epsilon^1))) - \mathbf{E}_{O_j} F(Y(s)) } \\
    &\leq \sup_{z \in C_{\epsilon,j}( r_j(\epsilon)+3\epsilon)} \Big|\mathbb{E}_z F(\Pi^\epsilon(Z^\epsilon(sT_\epsilon^1)))  \mathbbm{1}_{ \{  \sigma_j^\epsilon(\vec{L}^\epsilon) \geq sT_\epsilon^1   \} }  \mathbbm{1}_{ \{ Z^\epsilon( sT_\epsilon^1)  \in B_{\epsilon,j}(\delta)  \} } \\
    & \ \ \ \ \ \  \ \ \ \ \ \ \ \  \ \ \ \ \ \ \ \ \ \  \ \ \ \ \ \ \ \ \ \  + \mathbb{E}_z   \mathbbm{1}_{ \{ \sigma_j^\epsilon(\vec{L}^\epsilon) < sT_\epsilon^1 \} } F(\Pi^\epsilon(Z^\epsilon(\sigma_j^\epsilon(\vec{L}^\epsilon))))  
    -  \mathbf{E}_{O_j} F(Y(s))\Big| + o_{\epsilon,\delta}(1) \\
    & \leq \sup_{z \in C_{\epsilon,j}( r_j(\epsilon)+3\epsilon)} \abs{ \mathbb{E}_z F(\Pi^\epsilon(Z^\epsilon(sT_\epsilon^1)))  \mathbbm{1}_{ \{  \sigma_j^\epsilon(\vec{L}^\epsilon) \geq sT_\epsilon^1   \} }  \mathbbm{1}_{ \{ Z^\epsilon( sT_\epsilon^1)  \in B_{\epsilon,j}(\delta)  \} } -  \mathbf{E}_{O_j} F(Y(s)) \mathbbm{1}_{ \{ \sigma_1 \geq s \} } } \\
    & \ \ \ \ \ + \sup_{z \in C_{\epsilon,j}( r_j(\epsilon)+3\epsilon)} \abs{ \mathbb{E}_z \mathbbm{1}_{ \{ \sigma_j^\epsilon(\vec{L}^\epsilon) < sT_\epsilon^1 \} } F(\Pi^\epsilon(Z^\epsilon(\sigma_j^\epsilon(\vec{L}^\epsilon))))  -  \mathbf{E}_{O_j} F(Y(s)) \mathbbm{1}_{ \{ \sigma_1 < s \} } } + o_{\epsilon,\delta}(1) \\
    & \eqqcolon I_3^\epsilon + I_4^\epsilon + o_{\epsilon,\delta}(1).
\end{aligned}
\end{equation}
For $I_3^\epsilon$, by the continuity of $F$, Lemma \ref{conti lem}, equation \eqref{exp est 1} and the definition of $\sigma_1$, we have
\begin{equation}
\begin{aligned}
    &\sup_{z \in C_{\epsilon,j}( r_j(\epsilon)+3\epsilon)} \abs{ \mathbb{E}_z F(\Pi^\epsilon(Z^\epsilon(sT_\epsilon^1)))  \mathbbm{1}_{ \{  \sigma_j^\epsilon(\vec{L}^\epsilon) \geq sT_\epsilon^1   \} }  \mathbbm{1}_{ \{ Z^\epsilon( sT_\epsilon^1)  \in B_{\epsilon,j}(\delta)  \} } -  \mathbf{E}_{O_j} F(Y(s)) \mathbbm{1}_{ \{ \sigma_1 \geq s \} } } \\
    & \leq \sup_{z \in C_{\epsilon,j}( r_j(\epsilon)+3\epsilon)}  \mathbb{E}_z \abs{F(\Pi^\epsilon(Z^\epsilon(sT_\epsilon^1))) - F(O_j)} \mathbbm{1}_{ \{  \sigma_j^\epsilon(\vec{L}^\epsilon) \geq sT_\epsilon^1   \} }  \mathbbm{1}_{ \{ Z^\epsilon( sT_\epsilon^1)  \in B_{\epsilon,j}(\delta)  \} } \\
    & \ \ \ \ \  + \abs{F}_{C(\Gamma)} \sup_{z \in C_{\epsilon,j}( r_j(\epsilon)+3\epsilon)}  \mathbb{P}_z  \left( \sigma_j^\epsilon(\vec{L}^\epsilon) \geq sT_\epsilon^1 ,  Z^\epsilon( sT_\epsilon^1)  \in G_{\epsilon,j}(\delta)  \right)   \\
    & \ \ \ \ \ \ \ \ \ \ + \abs{F}_{C(\Gamma)} \sup_{z \in C_{\epsilon,j}( r_j(\epsilon)+3\epsilon)} \abs{ \mathbb{P}_z  \left( \sigma_j^\epsilon(\vec{L}^\epsilon) \geq sT_\epsilon^1  \right) -  \mathbf{P}_{O_j} \left( \sigma_1 \geq s \right) } \\
    & = o_{\epsilon,\delta}(1) + o(1) + o(1).
\end{aligned}
\end{equation}
For $I_4^\epsilon$, by the continuity of $F$, equation \eqref{exp est 2}, Lemma \ref{exit place est new} and the definition of $Y(s)$, we have
\begin{equation}
\begin{aligned}
    &\sup_{z \in C_{\epsilon,j}( r_j(\epsilon)+3\epsilon)} \abs{ \mathbb{E}_z \mathbbm{1}_{ \{ \sigma_j^\epsilon(\vec{L}^\epsilon) < sT_\epsilon^1 \} } F(\Pi^\epsilon(Z^\epsilon(\sigma_j^\epsilon(\vec{L}^\epsilon))))  -  \mathbf{E}_{O_j} F(Y(s)) \mathbbm{1}_{ \{ \sigma_1 < s \} } } \\
    &\leq \sup_{z \in C_{\epsilon,j}( r_j(\epsilon)+3\epsilon)} \sum_{j'=1}^{\abs{V}} \Big|
    \mathbb{E}_z \mathbbm{1}_{ \{ \sigma_j^\epsilon(\vec{L}^\epsilon) < sT_\epsilon^1 \} } \mathbbm{1}_{ \{ Z^\epsilon(\sigma_j^\epsilon(\vec{L}^\epsilon)) \in C_{\epsilon,j'}(r_{j'}(\epsilon)+3\epsilon) \} }F(\Pi^\epsilon(Z^\epsilon(\sigma_j^\epsilon(\vec{L}^\epsilon)))) \\
    & \ \ \ \ \ \ \ \ \ \ \ \ \ \ \ \ \ \ \ \ \ \ \ \ \ \ \ \ \ \ \ \ \ \ \ \ \ \ \ \ \ \ \ \ - \mathbf{E}_{O_j} F(Y(s)) \mathbbm{1}_{ \{ \sigma_1 < s \} } \mathbbm{1}_{ \{ \xi_1 =O_{j'} \} }
    \Big| \\
     &\leq \sup_{z \in C_{\epsilon,j}( r_j(\epsilon)+3\epsilon)} \sum_{j'=1}^{\abs{V}} 
    \mathbb{E}_z \mathbbm{1}_{ \{ \sigma_j^\epsilon(\vec{L}^\epsilon) < sT_\epsilon^1 \} } \mathbbm{1}_{ \{ Z^\epsilon(\sigma_j^\epsilon(\vec{L}^\epsilon)) \in C_{\epsilon,j'}(r_{j'}(\epsilon)+3\epsilon) \} }
    \abs{ F(\Pi^\epsilon(Z^\epsilon(\sigma_j^\epsilon(\vec{L}^\epsilon)))) -F(O_{j'})} \\
     &+ \abs{F}_{C(\Gamma)} \sup_{z \in C_{\epsilon,j}( r_j(\epsilon)+3\epsilon)} \sum_{j'=1}^{\abs{V}} \Big|
    \mathbb{E}_z \mathbbm{1}_{ \{ \sigma_j^\epsilon(\vec{L}^\epsilon) < sT_\epsilon^1 \} } \mathbbm{1}_{ \{ Z^\epsilon(\sigma_j^\epsilon(\vec{L}^\epsilon)) \in C_{\epsilon,j'}(r_{j'}(\epsilon)+3\epsilon) \} } - \mathbf{E}_{O_j} \mathbbm{1}_{ \{ \sigma_1 < s \} } \mathbbm{1}_{ \{ \xi_1 =O_{j'} \} }
    \Big| \\
    & = o(1) + o(1).
\end{aligned}
\end{equation}
This completes the proof.

\end{proof}

\subsection{Neumann problems}
First, recall that if $x \in I_k$ with endpoints $O_{j},O_{j'}$, then
\begin{equation}
	p(x,O_{j}) = 
        \frac{d(x,O_{j'})}{\abs{I_k}}
\end{equation}
and, for $O_{j''} \neq O_{j},O_{j'}$, 
\begin{equation}
    p(x,O_{j''}) = 0.
\end{equation}
We have the following corollary
\begin{Corollary}\label{coro conv pde fir}
    Let $\{ \varphi_\epsilon \}_{\epsilon \in (0,1)}$ be equicontinuous and equibounded.
    For any $s>0$, we have
    \begin{equation}
        \lim_{\epsilon \to 0} \sup_{x \in \Gamma} \sup_{z: \Pi^\epsilon(z)=x} \abs{
        \mathbb{E}_{z} \varphi_\epsilon (Z^\epsilon(sT_\epsilon^1)) 
        - \sum_{j=1}^{\abs{V}} p(x,O_j) \mathbf{E}_{O_{j}} \varphi_\epsilon (Y(s)) } =0.
    \end{equation}
\end{Corollary}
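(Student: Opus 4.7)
The plan is to reduce the statement to two ingredients: an edge-to-vertex reduction that identifies the limiting hitting distribution via the harmonic weights $p(x,O_j)$, together with a vertex-to-vertex evolution statement analogous to the previous theorem but phrased for the $\epsilon$-dependent data $\varphi_\epsilon$. The equicontinuity and equiboundedness hypotheses will be used only at the very end, to replace $\varphi_\epsilon(Z^\epsilon(sT_\epsilon^1))$ by $\varphi_\epsilon(O_{j'})$ on the events where the process is localized near a vertex.

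For the first ingredient, set
\begin{equation*}
    \tau^\epsilon \coloneqq \inf \Big\{ t \geq 0 : Z^\epsilon(t) \in \bigcup_{j=1}^{\abs{V}} C_{\epsilon,j}(r_j(\epsilon)+3\epsilon) \Big\}.
\end{equation*}
For $z \in G_\epsilon$ with $\Pi^\epsilon(z)=x \in I_k$, the projected process $\Pi(Z^\epsilon(\cdot))$ behaves, up to vanishing surface corrections, like a one-dimensional reflecting Brownian motion on $I_k$ until it reaches an endpoint neighborhood. Standard one-dimensional exit estimates then give
\begin{equation*}
    \lim_{\epsilon \to 0} \sup_{x \in \Gamma} \sup_{z:\Pi^\epsilon(z)=x} \Big| \mathbb{P}_z\big( Z^\epsilon(\tau^\epsilon) \in C_{\epsilon,j}(r_j(\epsilon)+3\epsilon) \big) - p(x,O_j) \Big| = 0,
\end{equation*}
together with $\sup_z \mathbb{E}_z \tau^\epsilon \lesssim 1$. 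Since $T_\epsilon^1 \to \infty$, Markov's inequality yields $\mathbb{P}_z(\tau^\epsilon \geq \delta' T_\epsilon^1) \to 0$ for any fixed $\delta'>0$. Applying the strong Markov property at $\tau^\epsilon$ and absorbing the negligible event $\{\tau^\epsilon > sT_\epsilon^1\}$ into an $o(1)$ term (using equiboundedness) reduces the proof to verifying, uniformly in $j$ and in $z' \in C_{\epsilon,j}(r_j(\epsilon)+3\epsilon)$, that
\begin{equation*}
    \mathbb{E}_{z'} \varphi_\epsilon(Z^\epsilon(sT_\epsilon^1 - \tau^\epsilon)) - \mathbf{E}_{O_j} \varphi_\epsilon(Y(s)) \to 0,
\end{equation*}
where a further continuity-in-$s$ argument (combined with the asymptotic exponential law of Theorem \ref{large exit time est}) replaces $sT_\epsilon^1 - \tau^\epsilon$ by $sT_\epsilon^1$ at negligible cost.

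For the second ingredient, I repeat the scheme used in the proof of the main theorem of Section \ref{sec conti}, but with $F \circ \Pi^\epsilon$ replaced by $\varphi_\epsilon$. The key identity is the decomposition into $\mathbbm{1}_{\{\sigma_j^\epsilon(\vec{L}^\epsilon) < sT_\epsilon^1\}}$ and its complement. On the complement, Lemma \ref{conti lem} (applicable because $j=j_1$ when starting from $C_{\epsilon,j_1}(r_{j_1}(\epsilon)+3\epsilon)$, the only relevant case after the first exit) shows that $Z^\epsilon(sT_\epsilon^1) \in B_{\epsilon,j_1}(\delta)$ with probability close to $\mathbf{P}_{O_{j_1}}(\sigma_1 \geq s)$; equicontinuity then gives $\abs{\varphi_\epsilon(Z^\epsilon(sT_\epsilon^1))-\varphi_\epsilon(O_{j_1})} \leq \eta$ on this event. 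On the event $\{\sigma_j^\epsilon(\vec{L}^\epsilon) < sT_\epsilon^1\}$, one uses Lemma \ref{exit place est new} to identify the exit-vertex distribution with that of $\xi_1$, and then uses \eqref{dis exit time upper} at the next vertex $O_{j'} \neq O_{j_1}$ (where $r_{j'}(\epsilon) \gg r_{(1)}(\epsilon)$, so $\sigma_{j'}^\epsilon(\delta) \gg T_\epsilon^1$) together with equicontinuity once more to conclude $\varphi_\epsilon(Z^\epsilon(sT_\epsilon^1)) \approx \varphi_\epsilon(O_{j'})$.

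The main obstacle I anticipate is precisely this post-exit localization in step two: it is Lemma \ref{conti lem} plus \eqref{dis exit time upper} that together guarantee that, at the terminal time $sT_\epsilon^1$, the process is within distance $\delta$ of whichever vertex the continuous-time chain $Y$ happens to be in, uniformly in the initial data. Once that localization is established, the passage from $F \in C(\Gamma)$ (the previous theorem) to $\varphi_\epsilon$ equicontinuous and equibounded is routine: we choose $\delta$ so that the equicontinuity modulus is at most $\eta$, then send $\epsilon \to 0$ and finally $\eta \to 0$. The equiboundedness is needed only to control the exceptional events whose probabilities have been shown to vanish.
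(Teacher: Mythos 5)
Your proposal follows the same two-step structure the paper intends: (1) introduce the first hitting time $\tau^\epsilon$ to $\bigcup_j C_{\epsilon,j}(r_j(\epsilon)+3\epsilon)$, identify its exit distribution with the harmonic weights $p(x,O_j)$ by a one-dimensional Brownian motion argument, and apply the strong Markov property (this is exactly what the proof of Corollary \ref{coro conv diff} does, and what the phrase ``follows the same argument as in Section \ref{sec pde}'' points to); and (2) rerun the proof of the main theorem of Section \ref{sec conti} with $\varphi_\epsilon$ in place of $F\circ\Pi^\epsilon$, substituting equicontinuity for continuity of $F$ and equiboundedness for $\abs{F}_{C(\Gamma)}$, as in Corollary \ref{coro conv pde spec}. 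The replacement of $sT_\epsilon^1-\tau^\epsilon$ by $sT_\epsilon^1$ is legitimate since $\tau^\epsilon/T_\epsilon^1\to 0$ and the limit is continuous in $s$.

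One claim should be corrected, however. You assert that $\sup_z \mathbb{E}_z \tau^\epsilon \lesssim 1$, which holds only for $z$ whose projection lies at macroscopic distance from the vertices. For $z$ inside a vertex ball $B(O_j, r_j(\epsilon))$, the process must perform a narrow escape to reach $C_{\epsilon,j}(r_j(\epsilon)+3\epsilon)$, and the expected escape time is of order $r_j(\epsilon)^d/\epsilon^{d-2}$ (up to logarithmic corrections when $d=2$), which is not $O(1)$ and, for $j\neq j_1$ with $r_j(\epsilon)/r_{j_1}(\epsilon)$ growing fast enough, can even exceed $T_\epsilon^1$. This does not break the statement, because for such $z$ the process simply remains in $B_{\epsilon,j}(\delta)$ up to time $sT_\epsilon^1$ with high probability, equicontinuity gives $\varphi_\epsilon(Z^\epsilon(sT_\epsilon^1))\approx\varphi_\epsilon(O_j)$, and since $O_j$ with $j\neq j_1$ has zero jump rate one has $\mathbf{E}_{O_j}\varphi_\epsilon(Y(s))=\varphi_\epsilon(O_j)$ --- so the conclusion holds directly without ever reaching $C_{\epsilon,j}$. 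For $j=j_1$ the escape time to $C_{\epsilon,j_1}(r_{j_1}(\epsilon)+3\epsilon)$ is $o(T_\epsilon^1)$, so the $\tau^\epsilon$-reduction applies. Thus the argument should treat starting points in the tubes and in $B(O_{j_1},r_{j_1}(\epsilon))$ via $\tau^\epsilon$, and starting points in the other vertex balls by the localization argument just sketched, rather than relying on a uniform $O(1)$ bound for $\mathbb{E}_z\tau^\epsilon$.
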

The proof follows the same argument as in Section \ref{sec pde} and is therefore omitted.

\subsection{Remarks}
Thanks to \eqref{exp est 2}, the result can be extended to the case where several vertices $O_j$ satisfies $r_j(\epsilon) \asymp r_{(1)}(\epsilon)$; for example, the dumbbell domain in \cite{NET}, Figure 1.1.

On the other hand, it is also of interest to study other critical time scales, as in \cite{LLS24+}, \cite{LM25}.
At an intuitive level, one may use \eqref{exp est 2} to construct a continuous-time Markov chain with the following behavior:
\begin{itemize}
\item for balls smaller than the critical time scale: it makes instantaneous jumps according to the probabilities given in Lemma \ref{exit place est new};
\item for balls larger than the critical time scale: it remains there forever;
\item for balls at the critical time scale: it jumps at rates determined by \eqref{exp est 2}, with transition probabilities given by Lemma \ref{exit place est new}.
\end{itemize}
To make the notion of “instantaneous jumps” rigorous, one possible approach is to construct a process $Y^\epsilon(s)$ based on \eqref{exp est 2} and Lemma \ref{exit place est new}.

Alternatively, for each time scale $T_\epsilon^i$, one can first reduce the state space so that the new process excludes the balls smaller than the critical time scale but still captures their effective transitions.
However, this reduction depends strongly on the underlying graph structure, making it difficult to discuss in full generality.

\section*{Acknowledgment}
The author thanks Leonid Koralov for many helpful discussions related to this work and to \cite{SMP}.
The author is also grateful to Sandra Cerrai and Chenglin Liu for insightful conversations.

\addcontentsline{toc}{section}{References}

\end{document}